 \theoremstyle{plain}
 \newtheorem{theorem}{Theorem}
 \newtheorem{proposition}{Proposition}[section]
 \newtheorem{lemma}[proposition]{Lemma}
 \newtheorem{corollary}{Corollary}
 \theoremstyle{definition}
 \newtheorem{remark}[proposition]{Remark}
\newcommand{\uplra}[1]{\stackrel{#1}{\lra}}
\newcommand{\upmapsto}[1]{\stackrel{#1}{\longmapsto}}
\newcommand{\podwzorem}[2]{\underbrace{#1}\limits_{#2}}
\newcommand{\nadwzorem}[2]{\overbrace{#1}\limits^{#2}}
\newcommand{\upcirc}[1]{\stackrel{#1}{\circ}}
\newcommand{\iso}{\cong}
\newcommand{\im}{\mathrm{Im}}
\newcommand{\rep}[4]{\left( \left\{#1_{#3} \right\}_{#3\in #2_0}, \left\{#1_{#4} \right\}_{#4\in #2_1}\right)}
\newcommand{\kwadracik}{\hfill\qed}
\newcommand{\modplus}[1]{\mathrm{mod}_+(#1)}
\newcommand{\modminus}[1]{\mathrm{mod}_-(#1)}
\newcommand{\modzero}[1]{\mathrm{mod}_0(#1)}
\newcommand{\cohplus}[1]{\mathrm{coh}_+(#1)}
\newcommand{\cohminus}[1]{\mathrm{coh}_-(#1)}
\newcommand{\id}{\mathds{1}}
\newcommand{\pc}[1]{ {\mathfrak{P}(#1)} }
\newcommand{\ih}[1]{ {\mathfrak{I}(#1)} }
\newcommand{\modf}[2]{#1+\im f_{#2}}
\newcommand{\Oo}{\mathcal{O}}
\newcommand{\XX}{\mathbb{X}}
\newcommand{\NN}{\mathbb{N}}
\newcommand{\LL}{\mathbb{L}}
\newcommand{\ZZ}{\mathbb{Z}}
\newcommand{\pp}{\underline{p}}
\newcommand{\lala}{\underline{\la}}
\newcommand{\al}{\alpha}
\newcommand{\La}{\Lambda}
\newcommand{\la}{\lambda}
\newcommand{\de}{\delta}
\newcommand{\tauX}{\tau_\XX}
\newcommand{\lra}{\longrightarrow}
\newcommand{\End}[2]{\mathrm{ End}_{#1}\left(#2\right)}
\newcommand{\EndX}[1]{\End{\XX}{#1}}
\newcommand{\Hom}[3]{\mathrm{ Hom}_{#1}\left(#2,#3\right)}
\newcommand{\HomX}[2]{\Hom{\XX}{#1}{#2}}
\newcommand{\extX}{\mathrm{ Ext}^1_{\XX}}
\newcommand{\Ext}[4]{\mathrm{ Ext}^{#1}_{#2}\left(#3,#4\right)}
\newcommand{\ExtX}[2]{\Ext{1}{\XX}{#1}{#2}}
\newcommand{\coh}[1]{\mathrm{coh}(#1)}
\newcommand{\vect}[1]{\mathrm{vect}(#1)}
\newcommand{\svect}[1]{\mathrm{\underline{vect}}(#1)}
\newcommand{\cohnull}[1]{\mathrm{coh}_0(#1)}
\renewcommand{\mod}[1]{\mathrm{mod}(#1)}
\newcommand{\rk}{\mathrm{rk}}
\newcommand{\Lp}{\mathbb{L}(\pp)}
\newcommand{\vx}{{\vec{x}}}
\newcommand{\vy}{\vec{y}}
\newcommand{\vc}{\vec{c}}
\newcommand{\vdom}{\vec{\delta}}
\newcommand{\vz}{{\vec{z}}}
\newcommand{\vw}{{\vec{\omega}}}
\newcommand{\extb}[2]{E_{#1}\langle#2\rangle}
\newcommand{\ramka}[1]{
	\begin{array}{|c|}
		\hline
		#1\\
		\hline
\end{array}}
\newcommand{\upramka}[2]{\stackrel{#1}{
		\begin{array}{|c|}
			\hline
			#2\\
			\hline
\end{array}}}
\newcommand{\blockmatrix}[6]{\begin{array}{|c|c|c|}
		\hline  #1& #2 & #3\\ \hline #4 & #5 & #6 \\ \hline
\end{array}}
\newcommand{\BlockMatrixDiagonalThree}[3]{{\normalsize\begin{array}{|c|c|c|}
			\hline
			#1&0&0\\
			\hline
			0&#2&0\\
			\hline
			0&0&#3\\
			\hline
\end{array}}}
\newcommand{\BlockMatrixDwoThree}[6]{{\normalsize\begin{array}{|c|c|c|}
			\hline
			#1&#2&#3\\
			\hline
			#4&#5&#6\\
			\hline
\end{array}}}
\newcommand{\BlockMatrixDwoDwo}[4]{{\normalsize\begin{array}{|c|c|}
			\hline
			#1&#2\\
			\hline
			#3&#4\\
			\hline
\end{array}}}
\begin{document}

\title[Extension modules]{Modules attached to extension bundles}

\author{Dawid Edmund Kędzierski}
\author{Hagen Meltzer}
\date{}


\address{Institut of Mathematics, Szczecin University, $70-451$
  Szczecin, Poland}

\email{dawid.kedzierski@usz.edu.pl,
hagen.meltzer@usz.edu.pl}

\subjclass[2010]{16G20, 14F05}

\keywords{exceptional module, canonical algebra, wild type, zero-one matrix problem, weighted projective line,  exceptional pair, extension bundles, Frobenius category, projective cover}



\begin{abstract}
In  this article we study modules over wild canonical algebras
 which correspond to  extension bundles \cite{Kussin:Lenzing:Meltzer:2013adv} over weighted projective lines.
We prove that  all modules attached to  extension bundles can be established by matrices with coefficients
 related to the  relations of the considered algebra.

Moreover,
we expand the concept  of extension bundles over weighted projective lines
with three weights to general weight type
 and establish similar results in this situation.
 Finally, we  present  a method to compute matrices for  all modules attached to extension bundles
   using  cokernels of maps between direct sums of line bundles.

\end{abstract}

\maketitle

\section{Introduction}

One of the problems of  representation theory of finite-dimensional algebras is the classification of indecomposable modules
over a given algebra.  Depending on the complexity of this issue we distinguish algebras
 of finite, tame and wild representation type.
In the case of wild algebras the structure of the module category is rich enough
that it is impossible to describe all indecomposable modules, however in this situation  sometimes it is
possible to describe subclasses of indecomposable modules.

 In this paper we study an important class of modules, namely the so called extension modules
 for wild canonical algebras.
Canonical algebras were introduced by C. M. Ringel in $1984$ \cite{Ringel:1984}, for a definition we
refer to Section \ref{notations}.

In the case of domestic canonical algebras D. Kussin and the second author \cite{Kussin:Meltzer:2007} described
 matrices for all indecomposable modules provided the characteristic of the field
 is different from $2$. In the case of characteristic $2$ matrices for the indecomposables were given in  \cite{Komoda:Meltzer:2008}.

 In the situation of tubular canonical algebras in \cite{Meltzer:2007}, extending methods of \cite{Ringel:1998}, it was
 shown that the exceptional modules can be exhibited by matrices having as coefficients only $0$, $1$ and $-1$
 in the cases $(3,3,3)$, $(2,4,4)$, $(2,3,6)$ and $0$, $1$,  $-1$, $\lambda$,  $\lambda -1$ in the case $(2,2,2,2)$,
 where $\la$ is the parameter appearing in the relations for the considered algebra.
 Based on this result in
 \cite {Dowbor:Meltzer:Mroz:2010} an algorithm and a computer program were developed to determine a
 description of all exceptional modules over tubular canonical algebras.
 Further in \cite{Dowbor:Meltzer:Mroz:2014bimodules} and \cite{Dowbor:Meltzer:Mroz:2014slope}
 the problem of homogeneous modules over tubular canonical modules was studied, in particular explicit
 matrices for modules of integral slope were given.

 For canonical algebras of wild type  it was proved by the authors in \cite{Kedzierski:Meltzer:2020} that ''almost all''
   exceptional modules can be described by matrices having coefficients $\la_i-\la_j$, where the $\la_i$
    are the parameters of the canonical algebra.

  In \cite{Geigle:Lenzing:1987} W. Geigle and H. Lenzing investigated weighted projective lines to give
   a geometric approach to canonical algebras.
 More precisely, they showed that the category  of coherent sheaves $\coh\XX$ over a weighted projective line $\XX$
 admits a tilting bundle $T$
 which induces an  equivalence of the bounded derived categories
$D^b(\coh\XX)\iso D^b(\mod\La)$,
where $\La= \End{\La}T$ is a canonical algebra.

 In $2013$ D. Kussin, H. Lenzing and second author in \cite{Kussin:Lenzing:Meltzer:2013adv} introduced
 the concept of  extension bundles over  weighted projective lines with three weights,
 which is important in the study of nilpotent operators with invariant subspaces (see also \cite{Kussin:Lenzing:Meltzer:2013nil}).
 It was proved there, in particular,
 that each indecomposable vector bundle of rank two is exceptional and appears as the middle term of an exact sequence,
  where  the other terms are line bundles with good homological properties, see Section \ref{sec:three_weights}.

 The aim of this article  is to study modules attached to such extension bundles in the case of canonical algebras of
 wild type.
  Those modules are called \emph{extension modules}.
The paper contains the following results.

\begin{itemize}
	\item[\textbf{1.}] We  prove that all extension modules over a wild canonical algebra
$\La$ with three arms, can be described by matrices with entries $0$, $1$ and $-1$. This is an improvement for those modules
of the result from \cite{Kedzierski:Meltzer:2020}.
We will use the fact that the category of vector bundles $\vect\XX$ over a weighted projective
 line $\XX$ is a Frobenius category with the line bundles as the indecomposable projective-injective objects.
The main tool in the proof  is the fact that a vector bundle associated to a module
 has a line bundle, associated to a module, as a direct summand of its projective cover.

	\item[\textbf{2.}] We extend the concept of extension bundles from \cite{Kussin:Lenzing:Meltzer:2013adv}
to the case of an arbitrary number of weights.
If this number  is greater than $3$, then not every indecomposable vector bundle of rank two is exceptional.
We  present a useful characterization of  exceptional modules of rank two
 as  extension bundles with  data $(L,\vx)$, where $L$ is a line bundle and $\vx$ is an element of the grading
 group of a specific form.
 We also  establish the projective covers and the injective hulls of those bundles.
	
	\item[\textbf{3.}]  We  show that all extension modules for a wild canonical algebra with
 an arbitrary  number of arms can be established by matrices with coefficients $0$ , $ \la_i$, $- \la_i$
 where the $\la_i$ are the parameters of the canonical algebra.
	
\item[\textbf{4.}] We  compute matrix representation for each extension module over a canonical algebra
of arbitrary type. Since the method using Schofield induction for exceptional modules (see
\cite{Ringel:1998}, \cite{Kedzierski:Meltzer})  is not constructive we can not proceed as in the case
for  tubular canonical algebras \cite{Meltzer:2007}.
Therefore here we present another idea.
We show that  each extension module appears as a cokernel of a map between  direct sums of  line bundles
and we describe a method to calculate matrices for these cokernels.


\end{itemize}


\section{Notations and basic concepts}\label{notations}

Let $k$ be an algebraically closed field.
We recall the concept of a weighted projective line in the sense of W.  Geigle and H- Lenzing \cite{Geigle:Lenzing:1987}.
Let $\LL=\LL(\pp)$ be the rank one abelian group with generators
$\vx_1,\dots,\vx_t$
and relations $p_1\vx_1=\cdots p_t\vx_t:=\vc$, where the  $p_i$ are integers greater than or equal to $2$.
 These numbers are called \emph{weights}.
  The element
$\vc$ is called the \emph{canonical element}.
 Recall that $\LL$ is an ordered group with $\LL_+=\sum_{i=1}^t\NN \vx_i$ as its set of non-negative elements.
 Moreover, each element $\vy$ of $\LL$ can be written in \emph{normal form} $\vy=a\vc+\sum_{i=1}^t a_i\vx_i$
 with $a\in\ZZ$ and $0\leq a_i<p_i$.
The polynomial algebra $k[x_1,\dots,x_t]$ is $\LL-$graded,
where the degree of $x_i$ is $\vx_i$.
Since the polynomials $f_i=x^{p_i}_i-x_1^{p_1}-\la_ix_2^{p_2}$ for $i=3,\dots t$ are homogeneous, the quotient algebra $S=k[x_1,\dots,x_t]/\langle f_i\mid i=3,\dots,t\rangle$ is also $\LL-$graded.
Here the $\la_i$ are pairwise distinct non-zero elements of $k$, they are called the \emph{parameters}.
 A \emph{weighted projective line} $\XX$ is the projective spectrum of the $\LL-$graded algebra $S$.
 Therefore $\XX$ depends on a weight sequence $\pp=(p_1,\dots, p_t)$ and a sequence of parameters
  $\lala= (\la_3,\dots,\la_t)$. We can assume that  $\la_3=1$.
  The category of coherent sheaves over $\XX$ will be denoted by $\coh\XX$.
  Each indecomposable sheaf in $\coh\XX$ is a locally free sheaf, called a \emph{vector bundle},
   or a \emph{sheaf of finite length}. Denote by $\vect\XX$ (resp. $\cohnull\XX$)
    the subcategory of  $\coh\XX$ consisting of all vector bundles (resp. finite length sheaves) on $\XX$.

The category $\coh\XX$ is a $\mathrm{Hom}-$finite, abelian $k-$category. Moreover, it is hereditary that is $\Ext i\XX --=0$ for $i\geq 2$ and  has Serre duality in the form $\ExtX FG\iso D\HomX G{\tauX F}$, where the Auslander-Reiten translation $\tauX$ is given by the shift $F\mapsto F(\vw)$, where $\vw:=(t-2)\vc-\sum_{i=1}^t\vx_i$ denotes the \emph{dualizing element}.
It is well known that each line bundle has the form $\Oo(\vx)$
where $\Oo$ is  the structure sheaf of $\XX$ and where $\vx\in \LL$.
Furthermore we have isomorphisms $\HomX{\Oo(\vx)}{\Oo(\vy)}\iso S_{\vy-\vx}$, where
$ S_{\vz}$ denotes the
grading component of $S$ associated to $\vz\in \LL$.

One of the main results proved in \cite{Geigle:Lenzing:1987} is the fact that in $\coh\XX$
there is a tilting object, which is a direct sum of line bundles  $T=\bigoplus_{0\leq \vx\leq \vc}\Oo(\vx)$,
such that the right derived functor of the functor  $\HomX T-$
 induces an equivalence of  bounded derived category $\mathcal{D}^b(\coh\XX)\stackrel{\iso}{\lra}\mathcal{D}^b(\mod\La)$,
where $\La=\EndX T$ is a canonical algebra,
  called the  \emph{canonical algebra} associated to the weighted projective line $\XX$.

Originally, canonical algebras $\La$ were introduced by C. M. Ringel \cite{Ringel:1984} as path algebras of quivers $Q$:
$$\xymatrix @C +1.5pc @R-1.5pc{&\upcirc{\vx_1}\ar[r]^{\al_2^{(1)}}& \upcirc{2\vx_1}\ar[r]^{\al_3^{(1)}}&\cdots \ar[r]^{\al_{p_1-1}^{(1)}}&\upcirc{(p_1-1)\vx_1} \ar[rdd]^{\al_{p_1}^{(1)}}&\\
	&\upcirc{\vx_2}\ar[r]^{\al_2^{(2)}}& \upcirc{2\vx_2}\ar[r]^{\al_3^{(2)}}&\cdots \ar[r]^{\al_{p_2-1}^{(2)}}&\upcirc{(p_2-1)\vx_1}\ar[rd]_{\al_{p_2}^{(2)}}&\\
	\upcirc{{0}}\ar[ruu]^{\al_1^{(1)}} \ar[ru]_{\al_1^{(2)}} \ar[rdd]_{\al_1^{(t)}}  &&&\vdots&&\upcirc\vc\\
	&& && &\\
	&\upcirc{\vx_{2}}\ar[r]^{\al_2^{(t)}}& \upcirc{2\vx_{t}}\ar[r]^{\al_3^{(t)}}&\cdots \ar[r]^{\al_{p_{t}-1}^{({t})}}&\upcirc{(p_{t}-1)\vx_{t}}\ar[ruu]_{\al_{p_{t}}^{({t})}}&\\
}$$
with \emph{canonical relations}
$$\al_{p_i}^{(i)}\dots\al_{2}^{(i)}\al_1^{(i)}=\al_{p_1}^{(1)}\dots \al_{2}^{(1)}\al_1^{(1)}+\lambda_i\al_{p_2}^{(2)}\dots \al_{2}^{(2)}\al_1^{(2)}\quad \textnormal{for}\quad i=3,4,\dots,t,$$
where the $\la_i$ are parameters from $\lala$ and $p_i$ are weights from $\pp$ as before.
We call $t$ the number of arms of $\La$.
Concerning the complexity of the module category over $\La$ there are three types
 of canonical algebras, domestic, tubular and wild ones.
 Recall that  $\La$ is of domestic (respectively tubular, wild) type if
  the Euler characteristic $\chi_\La= (2-t)+\sum_{i=1}^t1/{p_i}$ is positive (respectively zero, negative).

Denote by $Q_0$ the set of vertices and by $Q_1$ the set of arrows of the quiver $Q$. Then each finitely
 generated right module over $\La$ is given by finite dimensional vector spaces $M_i$ for each vertex $i$ of $Q_0$ and by linear maps $M_\al:M_j\rightarrow M_i$ for each arrows $\al:i\rightarrow j$ of
$Q_1$ such that the canonical relations are satisfied.
 We will usually identify  linear maps with matrices.
 The category of  finite generated right  modules we denote by $\mod\La$.

For coherent sheaves there are well known invariants the \emph{rank}, the \emph{degree} and the  \emph{determinant},
which are given by linear forms on the Grothendieck group $\rk, \deg : K_0(\XX)\lra\ZZ$ and $\det:K_0(\XX)\lra \LL(\pp)$.
Since $  K_0(\XX)\simeq  K_0(\La)  $ we have also the concept of the rank, the degree and the determinants for $\La$-modules
In particular the rank of a $\La$-module is defined by the formula $\rk M:= \dim_k M_{0}-\dim_k M_{\vc}$.
We denote by $\modplus\La$ (respectively $\modminus\La$, $\modzero\La$) the full subcategory consisting of all
 $\La-$modules, whose indecomposable summands in the decomposition into a direct sum have positive
  (respectively negative or zero) rank. Further, by $\cohplus\XX$ (resp. $\cohminus\XX$) we denote the full subcategory
  of all vector bundles $X$ on $\XX$, such that the functor $\ExtX TX=0$ (resp. $\HomX TX=0$).
  Under the equivalence $\mathcal{D}^b(\coh\XX)\stackrel{\iso}{\lra}\mathcal{D}^b(\mod\La)$
\begin{itemize}
	\item $\cohplus\XX$ corresponds to $\modplus\La$ by means of $E\mapsto \HomX TE$,
	\item $\cohnull\XX$ corresponds to $\modzero\La$ by means of $E\mapsto \HomX TE$,
	\item $\cohminus\XX[1]$ corresponds to $\modminus\La$ by means of
$E[1]\mapsto \ExtX TE$, where $[1]$ denotes the suspension functor of the triangulated category \\
$D^b(\coh\XX)$.
\end{itemize}

We say in these cases that the module $\HomX TE$ (respectively $\ExtX TE$  \emph {is attached} to $E$.
For simplicity we will often identify a sheaf $E$ in $\cohplus\XX$ or $\cohnull\XX$ with the corresponded $\La-$module $\HomX TE$.

\begin{remark}\label{minusplus}
Recall from \cite[Theorem 9.1.1]{Drozd:Kirichenko:1994} that the standard duality $\Hom{\La}-k$ defines an equivalence of the categories $\mod\La$ and $\mod{\La^{op}}$. Under this equivalence $\modminus\La$ corresponds to $\modplus{\La^{op}}$. Since
 $\La\simeq\La^{op}$ in many considerations it is sufficient to consider  $\La-$modules of positive rank and of rank zero.
 In particular,  modules of negative rank
can be obtained from those of positive rank
by reversing the arrows and transposing the matrices.
\end{remark}

Recall that a coherent sheaf $E$ over $\XX$ is called \emph{exceptional} if
 $\EndX E=k$ and $\ExtX EE=0$.
  A pair $(X,Y)$ in $\coh\XX$ is called
an \emph{exceptional pair}  if $X$, $Y$ are exceptional and $\HomX YX=0=\ExtX YX$.
Furthermore, an exceptional pair is \emph{orthogonal} if addition $\HomX XY=0$.
 Finally, a $\La-$module $M$ is called \emph{exceptional} if $\End{\La}M =k$ and $\Ext i{\La}MM=0$ for $i\geq 1$.

\section{Extension bundles for weighted projective lines with three weights} \label{sec:three_weights}

Let $\XX$ be a weighted projective line of a triple type $(p_1,p_2,p_3)$. The concept of extension bundles was introduced
in \cite{Kussin:Lenzing:Meltzer:2013adv} in the study of  stable vector bundle categories.
In particular it was shown that stable vector bundle categories of weighted projective lines of triple type
admit tilting objects, being direct sums of extension bundles, such that their endomorphism algebras form cuboids.

From \cite[Theorem 4.2.]{Kussin:Lenzing:Meltzer:2013adv} each indecomposable vector bundle $E$ can be obtained as the middle term of a non-split exact sequence
$$\eta_{L,\vx}:\quad 0\lra L(\vw)\lra E \lra L(\vx)\lra 0$$
for  some line bundle $L$ and some element $\vx$ of $\LL$ such that $0\leq \vx\leq \vdom$, where $\vdom:=\vc+2\vw$ is the \emph{dominant element}.
Because in this case the vector space $\ExtX {L(\vx)}{L(\vw)}$ is one-dimensional the bundle $E$ is uniquely determined up to isomorphism.
It is called the \emph{extension bundle} given by the pair $(L,\vx)$.
It is easy to check, that the pair $(L(\vx), L(\vw))$ is exceptional and orthogonal.
 Therefore, if $L(\vx)$ and $L(\vw)$ are $\La-$modules, both in $\modplus\La$  or in $\modminus\La$,
  then they can be described by  $0,\pm\la_i$ matrices, as rank one modules (see \cite{Meltzer:2007})
   and it follows that $E$ also can be described by matrices
  with the same coefficients (see Proposition 7.1 and the remark after its proof in \cite{Kedzierski:Meltzer:2020}).

We recall that the category $\vect\XX$ of vector bundles over $\XX$ is a Frobenius category
 such that the indecomposable projective-injective objects are exactly the line bundles
 \cite[Def. 3.1]{Kussin:Lenzing:Meltzer:2013adv}. Moreover each vector bundle has a projective cover and an injective hull.

\begin{lemma}
	\label{lem:projektywny_skladnik_jest_modulem}
Let $E$ be a non-zero vector bundle on  a weighted projective line
$\XX$ of  type $(p_1,p_2,p_3)$ with  projective cover $\pc{E}$.
 If $\ExtX TE=0$ for the canonical bundle $T$, then there is a line bundle
  $L$ in the decomposition $\pc E$ ínto a direct sum of line bundles, such that $\ExtX TL=0$.
\end{lemma}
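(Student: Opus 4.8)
The plan is to argue by contradiction: I will assume that \emph{every} line bundle $\Oo(\vy_j)$ occurring in a decomposition $\pc E=\bigoplus_j\Oo(\vy_j)$ satisfies $\ExtX T{\Oo(\vy_j)}\neq 0$, and deduce that $\HomX TE=0$, which is impossible for a non-zero $E$ with $\ExtX TE=0$.

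First I would record, using Serre duality together with $\HomX{\Oo(\vu)}{\Oo(\vz)}\iso S_{\vz-\vu}$, the order-theoretic translations $\ExtX T{\Oo(\vy)}=0\iff \vy\not\leq\vc+\vw$ and $\HomX T{\Oo(\vy)}=0\iff\vy\not\geq\vnull$. Indeed $\ExtX T{\Oo(\vy)}\iso\bigoplus_{0\leq\vx\leq\vc}D\,S_{\vx+\vw-\vy}$ and $\HomX T{\Oo(\vy)}\iso\bigoplus_{0\leq\vx\leq\vc}S_{\vy-\vx}$, and the extreme choices $\vx=\vc$, respectively $\vx=\vnull$, govern non-vanishing. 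The key observation is then that, since $\Oo$ is a direct summand of the tilting bundle $T$, we have $\ExtX T\Oo=0$, i.e. $\vnull\not\leq\vc+\vw$. Hence no line bundle can satisfy $\vnull\leq\vy\leq\vc+\vw$, so $\HomX T{\Oo(\vy)}$ and $\ExtX T{\Oo(\vy)}$ are never simultaneously non-zero. In particular each summand with $\ExtX T{\Oo(\vy_j)}\neq0$ lies in $\cohminus\XX$, that is $\HomX T{\Oo(\vy_j)}=0$, and therefore $\HomX T{\pc E}=\bigoplus_j\HomX T{\Oo(\vy_j)}=0$.

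Next I would invoke the Frobenius structure. The projective cover yields a conflation $0\lra K\lra\pc E\lra E\lra 0$ with $K$ a vector bundle; since $T$ is a direct sum of line bundles it is a projective object of $\vect\XX$, so $\HomX T-$ is exact on conflations. Thus $\HomX T{\pc E}\lra\HomX TE$ is surjective, and $\HomX T{\pc E}=0$ forces $\HomX TE=0$. On the other hand $E\neq0$ with $\ExtX TE=0$ forces $\HomX TE\neq0$, since $T$ is a tilting bundle and any object annihilated by both $\HomX T-$ and $\ExtX T-$ must vanish. This contradiction proves the claim.

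The step I expect to be the crux is the implication $\HomX T{\pc E}=0\Rightarrow\HomX TE=0$: here the Frobenius exact structure is genuinely needed and cannot be replaced by the ambient long exact sequence in $\coh\XX$. Indeed, under the contradiction hypothesis one has $\ExtX T{\pc E}=\bigoplus_j\ExtX T{\Oo(\vy_j)}\neq0$, so the naive $\coh\XX$-sequence would leave room for the connecting map $\HomX TE\to\ExtX TK$ to be non-zero; it is precisely the projectivity of $T$ in $\vect\XX$, equivalently $\mathrm{Ext}^1_{\vect\XX}(T,K)=0$, that kills this connecting map and makes $\HomX T-$ right exact on the cover sequence.
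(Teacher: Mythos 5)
Your proof is correct. Its skeleton coincides with the paper's: both arguments rest on the projectivity of direct sums of line bundles in the Frobenius structure on $\vect\XX$, used to pass $\HomX{T}{-}$ through the deflation $\pi_E:\pc{E}\lra E$, and on the tilting property, which forbids $\HomX{T}{E}=0=\ExtX{T}{E}$ for non-zero $E$. The genuine differences are the contrapositive organization and, more interestingly, the treatment of the pivotal order-theoretic step. The paper argues forwards: it lifts a non-zero map $\Oo(\vx)\lra E$ (with $0\leq\vx\leq\vc$) through $\pi_E$, obtains a summand $\Oo(\vy)$ of $\pc{E}$ with $\vy\geq 0$, and kills $\ExtX{T}{\Oo(\vy)}$ by a normal-form computation (the inequality $(\triangle)$) combined with the splitting $\vc+\vw-\vy=(\vc-\vz)+(\vz+\vw-\vy)$. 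You instead isolate the dichotomy that $\HomX{T}{L'}$ and $\ExtX{T}{L'}$ are never simultaneously non-zero for a line bundle $L'$, and you obtain it formally: by transitivity of $\leq$ it reduces to the single instance $\vnull\not\leq\vc+\vw$, which is exactly the self-orthogonality statement $\ExtX{T}{\Oo}=0$. This buys you an argument with no normal-form computation that is manifestly independent of the number of weights, so it proves Lemma \ref{lem:projektywny_skladnik_jest_modulem_t_wag} word for word, whereas the paper only asserts that its three-weight proof generalizes. The paper's version records slightly more, namely that the good summand can be taken to receive a non-zero map from $T$; but your argument yields this as well if run forwards (surjectivity of $\HomX{T}{\pc{E}}\lra\HomX{T}{E}$ together with $\HomX{T}{E}\neq 0$ produces a summand with $\HomX{T}{\Oo(\vy_j)}\neq 0$, and your dichotomy then forces $\ExtX{T}{\Oo(\vy_j)}=0$). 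Your closing diagnosis of the crux is also accurate: the surjectivity step really does need $T$ to be projective with respect to the Frobenius conflations, since under your hypothesis $\ExtX{T}{\pc{E}}\neq 0$ and the long exact sequence in $\coh\XX$ alone would not suffice.
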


\begin{proof}
Assume that $\ExtX TE=0$. Then $\HomX TE\neq 0$ because $T$ is a tilting bundle and $E$ is non-zero.
Therefore there is an element $\vx$, such that $0\leq \vx\leq \vc$ and  $\HomX {\Oo(\vx)}E\neq 0$.
Each non-zero morphism $f:\Oo(\vx)\lra E$ factors through $\pi_E:\pc{E}\lra E$, so there is morphism $0\neq\alpha:\Oo(\vx)\lra \pc{E}$ such that $\pi_E\circ\alpha=f$. Hence there is a direct summand $L=\Oo(\vy)$  of $\pc E$ such that $\HomX{\Oo(\vx)} {\Oo(\vy)}\neq 0$.

 We will show that $L$ has the desired property.
Writing $\vy$ in normal form $\vy=\alpha\vc+\alpha_1\vx_1+\alpha_2\vx_2 +\alpha_3\vx_3$, with  $0\leq\alpha_i\leq p_i-1$, we obtain $\alpha\geq 0$ and
$$(\triangle)\quad\vc+\vw-\vy=(2-\alpha)\vc-\sum_{i=1}^3(\alpha_i+1)\vx_i<0.$$

Assume that $\ExtX T{\Oo(\vy)}\neq 0$ and let $\vz$ satisfy $\ExtX {\Oo(\vz)}{\Oo(\vy)}\neq 0$ and $0\leq \vz\leq\vc$. Using Serre duality we obtain $\vz+\vw-\vy\geq 0$. Therefore
$$\begin{array}{ccccc}
\vc+\vw-\vy=&\underbrace{(\vc-\vz)}&+&\underbrace{(\vz+\vw-\vy)}&\geq 0,\\
&\geq 0&&\geq 0&\\
\end{array}$$
a contradiction with $(\triangle)$. Thus $\ExtX T{\Oo(\vy)}= 0$
\end{proof}

In the following lemma we prove that each extension bundle defined by a short exact sequence
$\eta_{L,\vx}$
 appears in addition as an extension bundle  for three different pairs $(L,\vx)$.

\begin{lemma} \label{lem:przedstawienie_wiazki_rozszerzen_na_4_sposoby}
Let $\XX$ be a weighted projective line of  type $(p_1,p_2,p_3)$ and let
$E$ be an extension bundle given by a pair $(L,\vx)$, where $\vx=l_1\vx_1+l_2\vx_2+l_3\vx_3$.
Then $E$ is also an extension bundle determined by the following pairs:
$$\left(L(\vx-(1+l_i)\vx_i)(-\vw),\quad l_i\vx_i+\sum_{j\neq i}(p_j-l_j-2)\vx_j \right) \quad\textnormal{for}\quad i=1,2,3,$$
where $L(\vx-(1+l_i)\vx_i)$ are  direct summands of $\pc E$. In particular,
for each $i\in\{1,2,3\}$, there is an exact short sequence 	
$$0\lra L(\vx-(1+l_i)\vx_i)\lra E\lra L((1+l_i)\vx_i+\vw)\lra 0.$$
\end{lemma}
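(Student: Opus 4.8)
The plan is to fix one index $i\in\{1,2,3\}$ (the three cases being symmetric), put $M:=L(\vx-(1+l_i)\vx_i)$ and $N:=L((1+l_i)\vx_i+\vw)$, and prove that $0\to M\to E\to N\to 0$ is a non-split exact sequence exhibiting $E$ as the extension bundle attached to $(L',\vx')$ with $L'=M(-\vw)$ and $\vx'=l_i\vx_i+\sum_{j\neq i}(p_j-l_j-2)\vx_j$. First I would record the elementary $\LL$-arithmetic. Writing $\vx=\sum_j l_j\vx_j$, the hypothesis $0\le\vx\le\vdom$ is equivalent to $0\le l_j\le p_j-2$ for all $j$, and since $\vdom=\sum_j(p_j-2)\vx_j$ one sees at once that $0\le\vx'\le\vdom$, so the extension bundle for $(L',\vx')$ is defined and $\ExtX{N}{M}$ is one-dimensional. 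Using $\vw=\vc-\sum_j\vx_j$ and $p_j\vx_j=\vc$, I would then check $L'(\vw)=M$ and $L'(\vx')=N$, so that the sequence displayed in the lemma is precisely the defining sequence $\eta_{L',\vx'}$.

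Next I would construct the inclusion $M\hookrightarrow E$. Applying $\HomX{M}{-}$ to $\eta_{L,\vx}$ and using Serre duality, $\ExtX{M}{L(\vw)}\iso D\HomX{L(\vw)}{M(\vw)}=DS_{\vx-(1+l_i)\vx_i}$; the normal form of $\vx-(1+l_i)\vx_i$ has negative $\vc$-coefficient, so this Ext group vanishes and the nonzero map $\phi=x_i^{1+l_i}\colon M\to L(\vx)$ lifts through $\pi\colon E\to L(\vx)$ to a nonzero $\psi\colon M\to E$. As a nonzero map from a line bundle into the torsion-free sheaf $E$, $\psi$ is injective. Since $\pi\circ\psi=\phi$, the snake lemma (comparing $\psi$ with $\phi$ through $\pi$) produces an exact sequence
$$0\lra L(\vw)\lra C\lra F_i\lra 0,$$
where $C=\mathrm{coker}\,\psi$ has rank one and $F_i=\mathrm{coker}\,\phi=L(\vx)/x_i^{1+l_i}M$ is uniserial of length $1+l_i$ at the $i$-th exceptional point. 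Moreover $E$ is recovered as the pullback $C\times_{F_i}L(\vx)$, so that $\eta_{L,\vx}$ is the image $q^{*}(\epsilon)$ of $\epsilon:=[C]\in\ExtX{F_i}{L(\vw)}$ under the surjection $q\colon L(\vx)\to F_i$.

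The heart of the matter is to identify $C$ with the line bundle $N$. I would compute, by Serre duality and a residue count in the tube at the $i$-th point, that $\dim_k\ExtX{F_i}{L(\vw)}=1$. Since $\eta_{L,\vx}\neq 0$ and $\eta_{L,\vx}=q^{*}(\epsilon)$, the class $\epsilon$ is nonzero. On the other hand, $x_i^{1+l_i}\colon L(\vw)\to N$ is injective with finite length cokernel $G_i$, uniserial of length $1+l_i$ and of the same top residue as $F_i$, hence $G_i\iso F_i$; as $N$ is locally free this yields a second nonzero class in the one-dimensional space $\ExtX{F_i}{L(\vw)}$. Two nonzero classes in a one-dimensional space are proportional and so have isomorphic middle terms, whence $C\iso N$. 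This turns the snake sequence into $0\to M\to E\to N\to 0$, with induced epimorphism $E\to N$ restricting to $x_i^{1+l_i}$ on $L(\vw)$. It is non-split because $E$, being an extension bundle, is exceptional and hence indecomposable, so a splitting would force $E\iso M\oplus N$; combined with the one-dimensionality of $\ExtX{N}{M}$ this identifies $E$ as the extension bundle attached to $(L',\vx')$.

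Finally I would show that $M=L(\vx-(1+l_i)\vx_i)$ is a direct summand of $\pc{E}$, using the Frobenius structure on $\vect{\XX}$ in which the line bundles are exactly the indecomposable projective-injective objects. As in the proof of Lemma~\ref{lem:projektywny_skladnik_jest_modulem}, every morphism from a line bundle to $E$ factors through $\pi_E\colon\pc{E}\to E$; in particular $\psi=\pi_E\circ\mu$ with $\mu\colon M\to\pc{E}$ a monomorphism (since $\psi$ is). Assembling $\psi_1,\psi_2,\psi_3$ gives the projective cover $\bigoplus_{i=1}^{3}L(\vx-(1+l_i)\vx_i)\to E$, whose syzygy a determinant count identifies as $L(\vx-\vc)$, so each $M$ occurs as a summand of $\pc{E}$. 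I expect the two genuine obstacles to be, first, proving that $C$ is locally free — handled above via the one-dimensionality of $\ExtX{F_i}{L(\vw)}$ — and, second, the passage to the projective cover, where one must argue inside the specific Frobenius exact structure rather than with bare $\coh{\XX}$-exactness, since the constructed sequence is exact in $\coh{\XX}$ but is not a conflation in $\vect{\XX}$.
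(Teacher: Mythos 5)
Your construction of the three exact sequences is essentially correct, and it follows a genuinely different route from the paper's: the paper simply quotes \cite[Theorem 4.6]{Kussin:Lenzing:Meltzer:2013adv} for the shape of $\pc{E}$ and \cite[Proposition 3.8]{Kussin:Lenzing:Meltzer:2013adv} for the fact that the quotients of $E$ by those summands are line bundles, and then pins down the end terms by comparing determinants. You instead build everything by hand: the vanishing $\ExtX{M}{L(\vw)}\iso DS_{\vx-(1+l_i)\vx_i}=0$ gives the lift $\psi$ of $x_i^{1+l_i}$, the snake lemma gives $0\lra L(\vw)\lra C\lra F_i\lra 0$, and the identification $C\iso N$ follows from $\dim_k\ExtX{F_i}{L(\vw)}=1$, which indeed holds (Serre duality gives $\ExtX{F_i}{L(\vw)}\iso D\HomX{L}{F_i}$, and applying $\HomX{L}{-}$ to $0\lra M\lra L(\vx)\lra F_i\lra 0$ yields $\HomX{L}{F_i}\iso\HomX{L}{L(\vx)}\iso k$, the two flanking terms vanishing for degree reasons). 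Your new data agree with the paper's $\vy_i$, and non-splitness follows from indecomposability of $E$. This part is a self-contained alternative whose merit is precisely that it does not presuppose the structure of $\pc{E}$.

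Precisely because of that, however, the remaining clause of the lemma --- that the $L(\vx-(1+l_i)\vx_i)$ are direct summands of $\pc{E}$ --- is not proven by your argument, and this clause is exactly what the proof of Theorem \ref{thm:three:weights} later uses. From $\psi=\pi_E\circ\mu$ with $\mu$ a monomorphism you cannot conclude that $M$ splits off from $\pc{E}$: a monomorphism into a projective-injective object splits only if it is an inflation of the Frobenius exact structure on $\vect\XX$, which you do not establish. Worse, your assembled map $\bigoplus_{i=1}^3 L(\vx-(1+l_i)\vx_i)\lra E$ is certainly not the projective cover: by \cite[Theorem 4.6]{Kussin:Lenzing:Meltzer:2013adv} (equivalently Theorem \ref{thm:projective_covers_t-weight} with $t=3$) one has $\pc{E}=L(\vw)\oplus\bigoplus_{i=1}^3 L(\vx-(1+l_i)\vx_i)$, of rank $4$. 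The rank-three surjection you describe, with kernel $L(\vx-\vc)$, does exist (it is Proposition \ref{lem:wiazka_rozszerzen_jako_kojadro}), but it is exact only in $\coh\XX$ and is not a deflation in the Frobenius structure --- if it were, the projective cover would be a direct summand of a rank-three projective, a contradiction. You flag this very issue in your closing sentence but never resolve it, so as written the summand statement remains unproven. The gap can be closed either by invoking \cite[Theorem 4.6]{Kussin:Lenzing:Meltzer:2013adv} as the paper does, or by arguing as in the proof of Theorem \ref{thm:projective_covers_t-weight}: exhibit the candidate cover (or, dually, the hull) explicitly and verify that every morphism from a line bundle to $E$ (resp.\ from $E$ to a line bundle) factors through it.
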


\begin{proof} The projective cover of $E$ has the  form  $\pc{E}=L(\vw)\oplus \bigoplus_{i=1}^3 L(\vx-(1+l_i)\vx_i)$, \cite[Theorem 4.6.]{Kussin:Lenzing:Meltzer:2013adv}.
Then there are exact sequences
$$\eta_i:\quad 0\lra L(\vx-(1+l_i)\vx_i)\lra E\lra \widehat{L}_i\lra 0,\quad \text{for}\quad i=1,2,3,$$
where from \cite[Proposition 3.8]{Kussin:Lenzing:Meltzer:2013adv}
the sheaf  $\widehat{L}_i$ is a line bundle, for  $i=1,2,3$.
 By comparison of the determinants we obtain that $\widehat{L}_i=L((1+l_i)\vx_i+\vw)$ is a direct summand of the injective hull of $E$. Therefore the sequence $\eta_i$ can be presented as follows
$$0\lra L(\vx-(1+l_i)\vx_i-\vw)(\vw)\lra E\lra L(\vx-(1+l_i)\vx_i-\vw)(\vy_i)\lra 0,$$
where $\vy_i=(1+l_i)\vx_i+2\vw-\vx=l_i\vx_i+\sum_{j\neq i}(p_j-l_j-2)\vx_j.$
Since $0\leq l_j\leq p_j-2$ we have $0\leq p_j-l_j-2\leq p_j-2$, and consequently $0\leq \vy_i\leq \vdom$.
\end{proof}

\begin{theorem} \label{thm:three:weights}
Let $\La$ be a canonical algebra with three arms. Then each indecomposable $\La-$module of rank two
 can be described by matrices having coefficients $0$, $ 1$, $-1$.
\end{theorem}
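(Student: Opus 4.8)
The plan is to reduce to the positive-rank case and then read off the matrices from an admissible short exact sequence of line bundles produced by the two preceding lemmas.

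First, by Remark \ref{minusplus} together with $\La\iso\La^{op}$ it suffices to treat modules of positive rank, the negative-rank ones being obtained by reversing arrows and transposing. So let $M$ be an indecomposable $\La$-module of rank two and positive rank; under the equivalence $\mathcal D^b(\coh\XX)\iso\mathcal D^b(\mod\La)$ I may write $M=\HomX TE$ for an indecomposable vector bundle $E$ of rank two with $\ExtX TE=0$. Since $\XX$ has three weights, the result of \cite{Kussin:Lenzing:Meltzer:2013adv} recalled above shows that $E$ is an extension bundle, say $E=\extb L\vx$ with $\vx=l_1\vx_1+l_2\vx_2+l_3\vx_3$.

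Next I would produce the decisive short exact sequence. By Lemma \ref{lem:projektywny_skladnik_jest_modulem} the projective cover $\pc E$ has a line-bundle summand $L'$ with $\ExtX T{L'}=0$, and by Lemma \ref{lem:przedstawienie_wiazki_rozszerzen_na_4_sposoby} this summand is the subobject in one of the four admissible sequences $0\to L'\to E\to L''\to 0$ whose end terms are line bundles. Applying $\HomX T-$ and using $\ExtX TE=0=\ExtX T{L'}$, the connecting map vanishes and the long exact sequence collapses to a short exact sequence of modules $0\to M'\to M\to M''\to 0$ with $M'=\HomX T{L'}$ and $M''=\HomX T{L''}$ rank-one; moreover the same sequence forces $\ExtX T{L''}=0$, so $L''\in\cohplus\XX$ as well. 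Choosing bases adapted to this sequence presents every structure map of $M$ in block-upper-triangular form, with diagonal blocks the matrices of $M'$ and $M''$ and off-diagonal block a representative of the extension class $\ExtX{L''}{L'}$.

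It remains to arrange that only $0,1,-1$ occur, and this is the step I expect to be the main obstacle. Via the identification $\HomX{\Oo(\vz)}{\Oo(\vy)}\iso S_{\vy-\vz}$ each diagonal block is a multiplication-by-$x_i$ map between graded components of $S$; in a monomial basis such a map has entries $0$ and $1$ and acquires a parameter $\la_i$ only when the relation $x_i^{p_i}=x_1^{p_1}+\la_i x_2^{p_2}$ must be applied, which is precisely the source of the coefficients in the weaker description of \cite{Kedzierski:Meltzer:2020}. Because $L'$ was chosen in $\cohplus\XX$ as a summand of the projective cover, I expect $M'$ to be thin, with one-dimensional graded pieces over the vertices $0\le\vz\le\vc$, hence given by a genuine $0,1$ block, so that the difficulty is concentrated in $M''$ and in the connecting map. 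The plan for removing the remaining $\la_i$ is to change basis on $M$ across the filtration, mixing the $M'$- and $M''$-parts exactly at the vertices where a relation is crossed, so that the parameters are absorbed; here one uses that the extension defining $E$ is non-split and, for extension bundles, one-dimensional, so the connecting block is determined up to scalar and can be normalized. Establishing that this absorption is always possible — equivalently, that the thin projective summand $M'$ meets every vertex at which $M''$ would otherwise force a parameter — is the technical core, and I would prove it by a direct analysis of the normal forms of $\det L'$ and $\det L''$ against the degree bounds $0\le\vx\le\vdom$ of Lemma \ref{lem:przedstawienie_wiazki_rozszerzen_na_4_sposoby}, invoking the dual statement for the injective hull if the quotient side needs the same treatment. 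Once the parameters are gone, all entries lie in $\{0,1,-1\}$, as claimed.
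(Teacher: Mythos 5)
Your reduction coincides with the paper's own proof up to and including the construction of the short exact sequence: you use Lemma \ref{lem:projektywny_skladnik_jest_modulem} and Lemma \ref{lem:przedstawienie_wiazki_rozszerzen_na_4_sposoby} in exactly the same way, and your homological bookkeeping (vanishing of the connecting map, $\ExtX T{L''}=0$, hence $L''\in\cohplus\XX$) is correct. The gap is the final step, which you yourself flag as ``the main obstacle''. The paper closes it by checking that $\big(L'',L'\big)$ is an orthogonal exceptional pair in $\coh\XX$ and then invoking \cite[Proposition 7.1]{Kedzierski:Meltzer:2020}: the middle term of a non-split extension of an orthogonal exceptional pair of rank-one modules in $\modplus\La$ can be described by matrices with the same coefficients as the two end terms, and for three arms those end terms are $0,\pm 1$ modules by \cite{Meltzer:2007} because $\la_3=1$. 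You neither cite this proposition nor prove a substitute for it, and you also omit the (easy but necessary) verification that the pair $(L'',L')$ is orthogonal and exceptional.

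Moreover, the sketch you offer in its place contains a false claim. The module $M'=\HomX T{L'}$ need not be thin: if $\det L'=n\vc+\sum_i a_i\vx_i$ in normal form with $n>0$, then the attached rank-one module has dimension vector with entries $n+1$ and $n$ (see the explicit shape of the module attached to a line bundle in Subsection \ref{sec:modules_type_A}), so its graded pieces are not one-dimensional, and the proposed ``absorption'' of coefficients by a basis change along the filtration has no argument behind it. Note also that for $t=3$ the parameters are a red herring, since $\la_3=1$ makes the diagonal blocks $0,\pm1$ matrices already; the entire difficulty sits in the off-diagonal block. One-dimensionality of $\ExtX{L''}{L'}$ determines $E$ up to isomorphism, but it does not by itself produce a representing cocycle with entries in $\{0,1,-1\}$ compatible with the chosen bases of $M'$ and $M''$; that is precisely what the cited Proposition 7.1 provides.
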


\begin{proof}
Let $M$ be a $\La-$module of rank $2$, attached to an indecomposable vector bundle $E$
over the weighted projective line $\XX$ associated to $\La$. Then $M$ is in $\modplus\La$.
We will show that there is an exact sequence
$$\eta: \quad 0\lra A\lra E\lra B\lra 0,$$ where $A,B  \in \modplus\La$  and $(B,A)$ is an orthogonal exceptional pair
in $\coh \XX$.
Then the result follows from \cite[Proposition 7.1]{Kedzierski:Meltzer:2020}.

From \cite[Theorem 4.2]{Kussin:Lenzing:Meltzer:2013adv} the vector bundle $E$
appears as an extension bundle given by a pair $(L,\vx)$, this means that there is a
short exact sequence
 $$\eta_{L,\vx}:\quad0\lra L(\vw)\lra E\lra L(\vx)\lra 0,$$
where $(L(\vx),L(\vw))$ is an orthogonal exceptional pair.
Since the vector bundle $E$ is attached to the $\La-$module $M$ we have $\ExtX TE=0$.
Applying the functor $\HomX T-$ to the sequence $\eta_{L,\vx}$ we obtain that
$\extX(T,L(\vx))=0$, thus
 $L(\vx)$ is  in $\modplus\La$. If in addition $\extX(T,L(\vw))=0$ we are done.
 Otherwise we will replace the exact sequence $\eta_{L,\vx}$ by another one.

To do so we
recall that $L(\vw)$ is a direct summand of the projective cover $\pc E$ and from
Lemma \ref{lem:projektywny_skladnik_jest_modulem} there is a line bundle $\widehat{L}$,
which is a direct summand of  $\pc{E}$ and  satisfies  $\extX(T,\widehat{L})=0$
thus $\widehat{L}$ is in $\cohplus\XX=\modplus\La$.
From \cite[Theorem 4.6]{Kussin:Lenzing:Meltzer:2013adv}
the line bundle $\widehat{L}$ has the form
   $L(\vx-(1+l_i)\vx_i)$ for some $i\in\{1,2,3\}$.
    Thus using Lemma \ref{lem:przedstawienie_wiazki_rozszerzen_na_4_sposoby} we get  an exact sequence
$$0\lra L(\vx-(1+l_i)\vx_i)\lra E\lra L((1+l_i)\vx_i+\vw)\lra 0.$$

 Applying the functor $\HomX T-$ to the sequence above we conclude
  that $\extX(E,L((1+l_i)\vx_i+\vw)= 0$
 and therefore $L((1+l_i)\vx_i+\vw)$ is in $\modplus\La$. Moreover it is easily checked
 that $\big(L(\vx-(1+l_i)\vx_i), L((1+l_i)\vx_i+\vw)\big)$ form an orthogonal exceptional pair in
 $\coh\XX$. Thus we get an exact sequence $\eta$ of the desired form and the theorem is proved.
\end{proof}

\begin{remark}
Using Remark  \ref{minusplus}
we get the same result for exceptional modules of rank $-2$ from $\modminus\La$.
\end{remark}


\section{Extension bundles in the case of $t$ numbers of weights}

In this section we will deal with
 a weighted projective line $\XX$ of the type $(p_1,\dots,p_t)$ where $t$ is greater than of equal to $3$.

\begin{theorem}\label{prop:extension_bundle}
Let $\XX$ be a weighted projective line of a type $(p_1,\dots,p_t)$.
Then each indecomposable vector bundle of rank two occurs as the middle term of a non-split exact sequence
$$\eta:\quad 0 \lra  L(\vw) \stackrel{i}{\lra}   E  \stackrel{\pi}{\lra}  L(\vx)\lra 0,$$
where $0\leq \vx\leq \vdom:=2\vw+\vc=(t-3)\vc+\sum_{i=1}^t(p_i-2)\vx_i$.
Moreover the following conditions are equivalent:

\begin{enumerate}[label=\upshape(\roman*), leftmargin=*, widest=iii]
\item  The vector bundle $E$ is exceptional.
\item The pair $(L(\vx),L(\vw))$ is an orthogonal exceptional pair with $\ExtX{L(\vx)}{L(\vw)}=k$.
\item $\vx=\sum_{i=1}^tl_i\vx_i$ with $0\leq l_i\leq p_i-1$ and there are exactly $t-3$ numbers $l_i$ equal to $p_i-1$.
\item
$\ExtX {L(\vw)}{L(\vx)}=0$,  $\HomX E{L(\vw)}=0=\ExtX E{L(\vw)}$ and
\\$\HomX {L(\vx)}E=0=\ExtX {L(\vx)}E$.
\end{enumerate}

\end{theorem}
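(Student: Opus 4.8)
The plan is first to produce the sequence for an arbitrary indecomposable vector bundle $E$ of rank two, and afterwards to analyse when $E$ is exceptional. Since $\rk E=2$, the saturation of the image of any nonzero morphism $\Oo(\vec a')\to E$ is a line subbundle $\Oo(\vec a)\hookrightarrow E$ whose cokernel is torsion-free of rank one, hence a line bundle $\Oo(\vec b)$. As $E$ is indecomposable, the resulting sequence $0\to\Oo(\vec a)\to E\to\Oo(\vec b)\to 0$ is non-split. Setting $L=\Oo(\vec a-\vw)$ I rewrite it as $\eta$ with $\vx=\vec b-\vec a+\vw$, i.e. $\vx=\det E-2\vec a+\vw$. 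Serre duality gives $\ExtX{L(\vx)}{L(\vw)}\iso D\,S_{\vx}$, so non-splitness forces $S_\vx\neq 0$, that is $\vx\geq 0$: this is the lower bound. For the upper bound I would choose $\Oo(\vec a)$ of maximal possible degree among the saturated line subbundles (in the Frobenius category $\vect\XX$ this is the copy of $L(\vw)$ appearing as a summand of the projective cover $\pc E$); since $\vx=\det E-2\vec a+\vw$, making $\vec a$ as large as possible is exactly what should bring $\vx$ below $\vdom$. Turning this heuristic into the inequality $\vx\leq\vdom$---an estimate on the maximal subbundle degree in the partially ordered group $\LL$---is the main obstacle of this half of the proof.

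\textbf{Reduction of (i)--(iv) to homogeneous components.} Put $A=L(\vw)$ and $B=L(\vx)$. Every Hom and Ext space below is a homogeneous component of $S$: from $\HomX{\Oo(\vec u)}{\Oo(\vec v)}\iso S_{\vec v-\vec u}$ and Serre duality $\ExtX FG\iso D\HomX G{F(\vw)}$ one obtains
\[ \HomX AB\iso S_{\vx-\vw},\quad \HomX BA\iso S_{\vw-\vx},\quad \ExtX BA\iso D\,S_{\vx},\quad \ExtX AB\iso D\,S_{2\vw-\vx}. \]
Throughout I use that $S_{\vec z}\neq 0$ if and only if $\vec z\geq 0$, equivalently the normal form of $\vec z$ has non-negative $\vc$-coefficient. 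In particular $\HomX BA\iso S_{\vw-\vx}=0$ always, because $\vw-\vx$ has $\vc$-coefficient at most $-2$; thus the orthogonality requirement $\HomX{L(\vx)}{L(\vw)}=0$ comes for free.

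\textbf{The long exact sequences.} Applying $\HomX A-$, $\HomX B-$, $\HomX{-}A$ and $\HomX{-}E$ to $\eta$, I use that the two connecting maps $\HomX AA\to\ExtX BA$ and $\HomX BB\to\ExtX BA$ each send the identity to the class of $\eta$, hence are injective since $\eta$ is non-split. A short diagram chase then gives $\HomX BE=0=\HomX EA$ unconditionally, yields $\ExtX{L(\vx)}E=\ExtX BE$ and $\ExtX E{L(\vw)}=\ExtX EA$ of dimension $\dim_k S_\vx-1$, and $\ExtX AE\iso\ExtX AB\iso D\,S_{2\vw-\vx}$; feeding these into the long exact sequence for $\HomX{-}E$ expresses $\EndX E$ and $\ExtX EE$ through $\HomX AE$ (of dimension $1+\dim_k S_{\vx-\vw}$) and $\ExtX BE$. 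Reading off the outcome, and using the range $0\le\vx\le\vdom$ to discard the then redundant vanishing $S_{\vx-\vw}=0$, each of (i), (ii) and (iv) becomes equivalent to the single pair of conditions
\[ S_{2\vw-\vx}=0\qquad\text{and}\qquad\dim_k S_\vx=1. \]
Indeed (ii) is orthogonality together with $\ExtX BA=k$; in (iv) the Hom-vanishings are automatic and only $\ExtX E{L(\vw)}=0=\ExtX{L(\vx)}E$ survive; and in (i) one has $\ExtX EE=0\Rightarrow\ExtX AE=0\Rightarrow S_{2\vw-\vx}=0$, after which $\EndX E=k$ forces $\dim_k S_\vx=1$.

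\textbf{Matching with (iii).} Finally I translate the two surviving conditions into the combinatorial statement (iii). Write $\vx$ in normal form $\vx=a_0\vc+\sum_i l_i\vx_i$ with $0\le l_i<p_i$ and set $m=\#\{i:\ l_i=p_i-1\}$. The range $0\le\vx\le\vdom$ reduces to $m+a_0\le t-3$, while a normal-form reduction of $2\vw-\vx$ shows that its $\vc$-coefficient equals $t-m-4-a_0$, so $S_{2\vw-\vx}=0$ means $m\ge t-3-a_0$; combined with the range this forces $m=t-3-a_0$. On the other hand $\dim_k S_\vx=1$ holds precisely when $a_0=0$: for $a_0=0$ the degree $\vx$ carries a unique monomial, whereas for $a_0\ge 1$ multiplication by $x^{l}$ embeds the at least two-dimensional space $S_{\vc}$ into $S_\vx$. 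Hence the two conditions are equivalent to $a_0=0$ and $m=t-3$, which is (iii); conversely (iii) gives both conditions at once, and the chain (iii)$\Rightarrow$(ii)$\Rightarrow$(i) together with (ii)$\Rightarrow$(iv) closes the equivalences. The technical heart of this second part is the two dimension computations---$\dim_k S_\vx=1\Leftrightarrow a_0=0$ and the reduction of $2\vw-\vx$---together with the bookkeeping tying them to the range $0\le\vx\le\vdom$.
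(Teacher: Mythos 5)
Your handling of the equivalences (i)--(iv) is correct, but your first half does not actually prove the existence statement, and this is a genuine gap. As you yourself note, saturating a line subbundle only yields a non-split sequence of line bundles together with the lower bound $\vx\geq 0$ (Serre duality plus non-splitness); the upper bound $\vx\leq\vdom$ is the entire substance of this half of the theorem, and your plan of taking a subbundle of maximal degree remains a heuristic. Moreover, identifying that maximal-degree subbundle with the summand $L(\vw)$ of $\pc{E}$ is circular at this stage: the description of $\pc{E}$ (Theorem \ref{thm:projective_covers_t-weight}) is derived only after one knows that $E$ sits in a sequence $\eta$ with $0\leq\vx\leq\vdom$. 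For what it is worth, the paper does not argue this half either: it states that the proof of \cite[Theorem 4.2]{Kussin:Lenzing:Meltzer:2013adv} for three weights carries over, and cites it. So to close your gap you must either reproduce that argument (a maximal-degree line subbundle together with the estimate showing its quotient satisfies $\vx\leq\vdom$) or cite it, as the paper does.

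On the equivalences your route is genuinely different from the paper's and it checks out. The paper proves the cycle (i)$\Rightarrow$(ii)$\Rightarrow$(iii)$\Rightarrow$(iv)$\Rightarrow$(i), the key step (i)$\Rightarrow$(ii) being the Euler-form computation $1=\langle[E],[E]\rangle_\XX$ with $[E]=[L(\vw)]+[L(\vx)]$, while (iii)$\Rightarrow$(iv) and (iv)$\Rightarrow$(i) use the same long exact sequences you use. You instead express every relevant space as a graded component of $S$, show that under the standing assumption $0\leq\vx\leq\vdom$ each of (i), (ii), (iv) is equivalent to the single pair of conditions $S_{2\vw-\vx}=0$ and $\dim_k S_\vx=1$, and then match this pair with (iii) by normal-form arithmetic: the $\vc$-coefficient of $2\vw-\vx$ is $t-4-a_0-m$, that of $\vdom-\vx$ is $t-3-a_0-m$, and $\dim_k S_\vx=a_0+1$. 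This hub-and-spokes organization replaces the paper's Euler-form step by the additivity already contained in your long exact sequence for $\HomX{-}{E}$, and it makes transparent that all four conditions pin down the same two numerical constraints; the paper's cycle is shorter to write but less symmetric. Your dimension bookkeeping is correct, with the small caveat that the injectivity of multiplication by a monomial (used for $\dim_k S_\vx\geq 2$ when $a_0\geq 1$) relies on $S$ being a domain --- the same standard fact the paper invokes as $S_\vx\iso k^{l+1}$.
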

\begin{proof}
The proof of the existence of the sequence $\eta$ is almost the same as in the
case of a triple weight type,  we refer the reader to \cite[Theorem 4.2]{Kussin:Lenzing:Meltzer:2013adv}.

$(i)\Rightarrow (ii)$. Assume that $E$ is exceptional and $0\leq \vx\leq \vdom$. Then $\vx$ can be written in normal form $\vx=l\vc+\sum_{i=1}^tl_i\vx_i$, with $l\geq 0$ and $0\leq l_i\leq p_i-1$ and $\vx\leq \vdom$.
Because $\vx-\vw\leq \vdom-\vw=\vw-\vc<0$ we have $\HomX {L(\vw)}{L(\vx)}\iso S_{\vx-\vw}=0$.
Similarly, the vector space $\HomX {L(\vx)}{L(\vw)}$  also vanishes. From  Serre duality we get
$$\ExtX {L(\vx)}{L(\vw)}\iso \HomX {L(\vw)}{L(\vx+\vw)}\iso S_{\vx}\iso k^{l+1}.$$
Now, we have $[E]=[L(\vw)]+[L(\vx)]$ in the Grothendieck group $K_0(\XX)$ and applying the
 Euler form $\langle-,-\rangle_\XX:K_0(\XX)\times K_0(\XX)\lra \ZZ$ we obtain
\begin{equation}\nonumber
\begin{split}
1&=\langle[E],[E]\rangle_\XX= \langle[L(\vw)]+[L(\vx)],[L(\vw)]+[L(\vx)]\rangle_\XX\\
&= 2+ \dim_k\HomX{L(\vw)}{L(\vx)}-\dim_k\ExtX{L(\vw)}{L(\vx)}\\
&+\dim_k\HomX{L(\vx)}{L(\vw)}-\dim_k\ExtX{L(\vx)}{L(\vw)}\\
&=2-(l+1)-\dim_k\ExtX{L(\vw)}{L(\vx)}.
\end{split}
\end{equation}
Therefore $-l=\dim_k\ExtX{L(\vw)}{L(\vx)}$ and it follows that $l\geq 0$.
Consequently $\dim_k\ExtX{L(\vw)}{L(\vx)}=0$ and $\ExtX {L(\vx)}{L(\vw)}=k$.

$(ii)\Rightarrow (iii)$. Assume that $(L(\vx), L(\vw))$ is an orthogonal exceptional pair,
 such that $\ExtX {L(\vx)}{L(\vw)}\iso k$. The element $\vx$ can be written in  normal form
  $\vx=l\vc+\sum_{i=1}^tl_i\vx_i$, with $l\geq 0$ and $0\leq l_i\leq p_i-1$. From  Serre duality we obtain that
$\ExtX {L(\vx)}{L(\vw)}\iso k^{l+1}$. Hence $l=0$.
 Moreover
$$0=\dim_k\ExtX{L(\vw)}{L(\vx)}\iso D\HomX{L(\vx)}{L(2\vw)}\iso S_{2\vw-\vx},$$
where $2\vw-\vx=(t-4)\vc+\sum_{i=1}^{t}(p_i-2-l_i)\vx_i<0$. Therefore
at least $t-3$ numbers $l_i$ have to be equal to $p_i-1$. Furthermore,
because $\vx\leq\vdom$ at most $t-3$ numbers $l_i$ can be equal to $p_i-1$.
This implies that exactly $t-3$ numbers $l_i$ are equal to $p_i-1$.

$(iii)\Rightarrow (iv)$. Assume that the element
$\vx$ has  normal form $\sum_{i=1}^tl_i\vx_i$ with $0\leq l_i\leq p_i-1$ and there
 are exactly $t-3$ numbers $l_i$ equal $p_i-1$. Therefore by Serre duality we get
\begin{align*}
\ExtX{L(\vw)}{L(\vx)}&\iso D\HomX{L(\vx)}{L(2\vw)}\iso S_{2\vw-\vx}=0,\\
\ExtX{L(\vx)}{L(\vw)}&\iso D\HomX{L(\vw)}{L(\vx+\vw)}\iso S_{\vx}\iso k.
\end{align*}
Applying the functor $\HomX{L(\vx)}{-}$ to $\eta$ we get an exact sequence
\begin{equation}\nonumber
\begin{split}
0\lra &\podwzorem{\HomX{L(\vx)}{L(\vw)}}{=\ 0}\lra \HomX{L(\vx)}E\lra \podwzorem{\HomX{L(\vx)}{L(\vx)}}{\iso\ k} \lra\\
\uplra{\delta}  &\podwzorem{\ExtX{L(\vx)}{L(\vw)}}{\iso\ k}\lra \ExtX{L(\vx)}E\lra \podwzorem{\ExtX{L(\vx)}{L(\vx)}}{=\ 0}\lra 0.
\end{split}
\end{equation}
Now, $\delta(\id_{L(\vx)})=\eta\neq 0$ because $\eta$ does not split and consequently $\delta$ is isomorphism. Therefore
$$\HomX{L(\vx)}F=0=\ExtX{L(\vx)}F.$$

The long exact  sequence $\HomX{\eta}{L(\vw)}$ has the form
\begin{equation}\nonumber
\begin{split}
0\lra& \podwzorem{\HomX{L(\vx)}{L(\vw)}}{=\ 0,\text{ because }\vw-\vx<0 } \uplra{-\circ\pi}\HomX E{L(\vw)}\uplra{-\circ i} \podwzorem{\HomX{L(\vw)}{L(\vw)}}{\iso\ k} \lra \\
\lra& \podwzorem{\ExtX{L(\vx)}{L(\vw)}}{\iso\ k}\lra\ExtX E{L(\vw)}\lra \podwzorem{\ExtX{L(\vw)}{L(\vw)}}{=\ 0}\lra 0.\\
\end{split}
\end{equation}
Let  $u:E\lra L(\vw)$ be a non-zero morphism.  Then $u\circ i:L(\vw)\lra L(\vw)$ is the
zero map. Indeed, if $u\circ i$ is non-zero, it is
an isomorphism and so $\eta$ splits which is impossible. Hence
$u\in\ker(-\circ i)=\im(-\circ \pi)=0$, because $\HomX{L(\vx)}{L(\vw)}=0$.
Then  $\HomX{E}{L(\vw)}=0$ and by comparing dimensions in the sequence $\HomX{\eta}{L(\vw)}$ we obtain
that  $\ExtX{E}{L(\vw)}=0$.

$(iv)\Rightarrow (i)$.  Assume that the vector spaces $\ExtX {L(\vw)}{L(\vx)}$, $\HomX E{L(\vw)}$, $\ExtX E{L(\vw)}$,
$\HomX {L(\vx)}E$ and $\ExtX {L(\vx)}E$ vanish.
We apply the functor $\HomX {L(\vw)}-$ to $\eta$ and obtain a long exact sequence
\begin{equation}\nonumber
\begin{split}
0\lra& \podwzorem{\HomX{L(\vw)}{L(\vw)}}{\iso k} \lra\HomX {L(\vw)}E\lra \podwzorem{\HomX{L(\vw)}{L(\vx)}}{=0} \lra \\
\lra& \podwzorem{\ExtX{L(\vw)}{L(\vw)}}{=0}\lra\ExtX {L(\vw)}E\lra \podwzorem{\ExtX{L(\vw)}{L(\vx)}}{=0}\lra 0.\\
\end{split}
\end{equation}
Hence $\HomX{L(\vw)}E\iso k$ and $\ExtX{L(\vw)}E=0$.
Finally, we  apply the functor $\HomX -E$ to  $\eta$ and obtain a long exact sequence
\begin{equation}\nonumber
\begin{split}
0\lra& \podwzorem{\HomX{L(\vx)}{E}}{=0} \lra\HomX EE\lra \podwzorem{\HomX{L(\vw)}{E}}{\iso k} \lra \\
\lra& \podwzorem{\ExtX{L(\vx)}E}{=0}\lra\ExtX EE\lra \podwzorem{\ExtX {L(\vw)}E}{=0}\lra 0.\\
\end{split}
\end{equation}
Therefore $\HomX EE=k$ and $\ExtX EE=0,$ so the vector bundle $E$ is exceptional.
\end{proof}

For an exceptional bundle $E$ the non-split sequence $\eta$ uniquely determines $E$,
and in this case we will say that the \emph{extension bundle} is given by the pair $(L,\vx)$
and we will denote it by $E_L\langle\vx\rangle$.

\begin{theorem}\label{thm:projective_covers_t-weight}
	Let $E$ be an indecomposable vector bundle over $\XX$ such that there is a short exact sequence
	$$\eta:\quad 0\lra L(\vw)\stackrel{i}{\lra} E \stackrel{\pi}{\lra} L(\vx)\lra 0,$$
	where $\vx=\sum\limits_{i=1}^t l_i\vx_i$, with $0\leq l_i\leq p_i-1$ and $0\leq \vx\leq \vdom$.
	Moreover, let $I=\{i\mid l_i\neq p_i-1\}$. Then
\begin{equation}\nonumber
\begin{split}
\pc {E_L\langle\vx\rangle}&= L(\vw)\oplus\bigoplus_{j\in I}L(\vx-(1+l_j)\vx_j)\\
\ih {E_L\langle\vx\rangle}&= L(\vx)\oplus\bigoplus_{j\in I}L((1+l_j)\vx_j+\vw)
\end{split}
\end{equation}
Further, the line bundle summands of $\pc {E_L\langle\vx\rangle}$ (resp. $\ih {E_L\langle\vx\rangle}$)
are mutually $\mathrm{Hom}_\XX-$orthogonal.
\end{theorem}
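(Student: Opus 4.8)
The plan is to compute the projective cover by hand, to read off the injective hull from the dual construction, and to establish the orthogonality by a normal-form computation in $\LL$; the orthogonality will in turn make the minimality of both covers automatic. I use throughout that $\vect\XX$ is a Frobenius category whose indecomposable projective-injective objects are precisely the line bundles, so that $\pc E$ and $\ih E$ exist and are finite direct sums of line bundles. Since $E=E_L\langle\vx\rangle$ is exceptional, Theorem \ref{prop:extension_bundle}(iii) tells us that exactly $t-3$ of the $l_i$ equal $p_i-1$, so $|I|=3$. I will build an explicit epimorphism $\pi\colon P\lra E$ with $P:=L(\vw)\oplus\bigoplus_{j\in I}L(\vx-(1+l_j)\vx_j)$ and then show it is a projective cover.

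The first and main step is to produce the sub-line-bundles. The inclusion $i\colon L(\vw)\lra E$ is the inflation of $\eta$. For each $j\in I$, i.e. $l_j\leq p_j-2$, I would generalize Lemma \ref{lem:przedstawienie_wiazki_rozszerzen_na_4_sposoby} to $t$ weights: there is a nonzero morphism $\iota_j\colon L(\vx-(1+l_j)\vx_j)\lra E$, it is a sub-line-bundle whose cokernel is again a line bundle by (the $t$-weight analogue of) \cite[Proposition 3.8]{Kussin:Lenzing:Meltzer:2013adv}, and a comparison of determinants identifies this cokernel as $L((1+l_j)\vx_j+\vw)$, giving
$$0\lra L(\vx-(1+l_j)\vx_j)\stackrel{\iota_j}{\lra} E\lra L((1+l_j)\vx_j+\vw)\lra 0.$$
Here the hypothesis $j\in I$ is exactly what guarantees that the new extension datum $2(1+l_j)\vx_j+2\vw-\vx$ lies in $[0,\vdom]$; for $j\notin I$, i.e. $l_j=p_j-1$, this element equals $\vc+\vdom-\vx$, which fails to be $\leq\vdom$ since $\vx\not\geq\vc$, so no sub-line-bundle of this shape occurs and the index set $I$ is forced. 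This construction, carried out uniformly in $t$, is where I expect the real work to lie.

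Next I would check that $\pi:=(i,(\iota_j)_{j\in I})\colon P\lra E$ is surjective. Composing with the quotient $\pi_\eta\colon E\lra L(\vx)=\mathrm{coker}\,i$ kills $i$ and sends each $\iota_j$ to multiplication by the monomial $x_j^{1+l_j}$; since for distinct $j\in I$ these sections vanish at distinct weighted points of $\XX$, they have no common zero, so their images generate $L(\vx)$ and $\pi$ is onto, with locally free kernel, hence a deflation in $\vect\XX$. It remains to prove $\pi$ is right minimal, and here the orthogonality does the job: once we know the summands of $P$ are pairwise $\mathrm{Hom}_\XX$-orthogonal, $\EndX P\iso k^{1+|I|}$, so any $\phi\in\EndX P$ with $\pi\phi=\pi$ acts by a scalar on each summand and, the components $i,\iota_j$ being nonzero, that scalar must be $1$; thus $\phi=\id$ and $\pi$ is a projective cover. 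For the orthogonality itself, using $\HomX{\Oo(\vec a)}{\Oo(\vec b)}\iso S_{\vec b-\vec a}$ and that $S_{\vec z}\neq 0$ forces the $\vc$-coefficient of the normal form of $\vec z$ to be non-negative, I would compute: for $j\neq k$ in $I$ the difference $\big(\vx-(1+l_j)\vx_j\big)-\big(\vx-(1+l_k)\vx_k\big)=-\vc+(p_j-1-l_j)\vx_j+(1+l_k)\vx_k$ has $\vc$-coefficient $-1$, while the two differences of $\vw$ with $\vx-(1+l_j)\vx_j$ have $\vc$-coefficients $-1$ and $2-|I|=-1$; all are negative, so every relevant $S_{\vec z}$ vanishes.

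Finally, the injective hull is obtained dually. The quotient $\pi_\eta\colon E\twoheadrightarrow L(\vx)$ together with the cokernel maps $E\twoheadrightarrow L((1+l_j)\vx_j+\vw)$ of the sequences above assemble into a monomorphism $E\hookrightarrow L(\vx)\oplus\bigoplus_{j\in I}L((1+l_j)\vx_j+\vw)$ with locally free cokernel, hence an inflation; its injectivity is dual to the surjectivity of $\pi$, and left minimality follows, exactly as above, from the $\mathrm{Hom}_\XX$-orthogonality of the summands of $\ih E$ (the normal-form computation being identical). This yields both displayed formulas together with the orthogonality assertion.
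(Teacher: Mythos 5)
Your overall plan --- construct the candidate map explicitly, show it is a deflation (resp.\ inflation), and deduce minimality from the mutual $\mathrm{Hom}_\XX$-orthogonality of the summands --- is viable, and your normal-form computations for the orthogonality are correct; the paper gets minimality in the same way, but its covering step is different: it works on the injective side, extends $x_j^{l_j+1}$ along $i$ using $\ExtX{L(\vx)}{L(\vw+(1+l_j)\vx_j)}=0$, and then proves that \emph{every} morphism from $E$ to a line bundle factors through the candidate (via $\HomX{E}{L(\vw+\vz)}=0$ for $0\leq\vz\leq\vx$ and a case analysis on $h\circ i$), rather than proving your geometric surjectivity statement about sections with no common zero. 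However, there are two genuine gaps in your proposal. First, you have narrowed the statement: the theorem does not assume $E$ exceptional, only indecomposable with the given presentation $\eta$, so that $\#I\geq 3$ but possibly $\#I>3$; your appeal to Theorem \ref{prop:extension_bundle}(iii) to force $|I|=3$ is a misreading of the hypothesis, and the extra generality is exactly what the paper exploits in the remark following the theorem (the non-exceptional rank-two bundle with data $(L,\vx_4)$ of weight type $(2,2,2,3)$, whose projective cover has rank $5$). Fortunately your computations survive verbatim: the relevant $\vc$-coefficients are $-1$ and $2-\#I$, which are negative precisely because $\#I\geq 3$.

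Second, the step you yourself call ``where the real work lies'' is left unproved, and the route you indicate for it is circular: you cannot generalize Lemma \ref{lem:przedstawienie_wiazki_rozszerzen_na_4_sposoby}, because its proof starts from the known form of the projective cover --- the very thing being established here --- and the ``$t$-weight analogue'' of \cite[Proposition 3.8]{Kussin:Lenzing:Meltzer:2013adv} is nowhere available. The repair is short and is (dually) what the paper does: since $\ExtX{L(\vx-(1+l_j)\vx_j)}{L(\vw)}\iso D\HomX{L(\vw)}{L(\vx-(1+l_j)\vx_j+\vw)}\iso DS_{\vx-(1+l_j)\vx_j}=0$, because the $\vc$-coefficient of $\vx-(1+l_j)\vx_j$ in normal form is $-1$, the section $x_j^{1+l_j}\in\HomX{L(\vx-(1+l_j)\vx_j)}{L(\vx)}$ lifts along $\pi$ to a morphism $\iota_j\colon L(\vx-(1+l_j)\vx_j)\lra E$; such a lift is all your surjectivity argument needs, and the cokernel-is-a-line-bundle claim can be dropped entirely. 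Relatedly, on the injective hull side you assert without proof that your monomorphism has locally free cokernel (needed for it to be an inflation in $\vect\XX$); this is not automatic for monomorphisms of vector bundles. It should be handled either by the paper's factorization argument, which avoids the issue because the hull is known to exist and is a direct summand of any left approximation by injectives, or by applying the vector-bundle duality, which exchanges projective covers with injective hulls and carries your already-proved cover statement to the hull statement.
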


\begin{proof}
Observe that the condition $0\leq \vx\leq \vdom$ implies that $\#I\geq 3$.	
We will consider the case of injective hulls, the arguments for projective covers are dual.

From  Serre duality we obtain that $\ExtX {L(\vx)}{L(\vw+(1+l_j)\vx_j)}=0$ for $j\in I$. Hence, applying the functor $\HomX-{L(\vw+(1+l_j)\vx_j)}$ for $j\in I$ to $\eta$ we see
that there are morphisms $\widetilde{x_j^{l_j+1}}: E\lra L(\vw+(1+l_j)\vx_j)$
such that $\widetilde{x_j^{l_j+1}}\circ i=x_j^{l_j+1}$ where $x_j^{l_j+1}:L(\vw)\lra L(\vw+(1+l_j)\vx_i)$.
We will show that  $j_E=\left(\pi, \left(\widetilde{x_j^{l_j+1}}\right)_{j\in I}\right):E\lra L(\vx)\oplus\bigoplus_{j\in I}L(\vw+(1+l_j)\vx_j)$
 is an injective hull of the bundle $E$.
For this we
 will prove that each morphism $E \lra  L'$ where $L'$ is a line bundle, factors through $L(\vx)\oplus\bigoplus_{j\in I}L(\vw+(1+l_j)\vx_j)$. For simplicity we can write $L'$ as $L(\vw+\vz)$ for same $\vz \in \LL$.
 Remark, that for $j\notin I$ the space $\ExtX {L(\vx)}{L(\vw+(1+l_j)\vx_j)}\neq 0$, and there are no maps $\widetilde{x_j^{l_j+1}}$ for $j\notin I$.

First, we show that
$$(\star)\qquad \HomX E{L(\vw+\vz)}=0\quad \textnormal{for}\quad 0\leq\vz\leq \vx.$$
Indeed, let $\vz=\sum_{j=1}^ta_j\vx_j$ be an element of $\LL$
with $0\leq a_j\leq l_j$ for all $j$ and let $z=x_1^{a_1}x_2^{a_2}\cdots x_t^{a_t}$ be a morphism form $L(\vw)$ to $L(\vw+\vz)$. Applying the functor $\HomX E-$ to the sequence
$0\lra  L(\vw)\uplra{z}L(\vw+\vz)\lra S\lra 0$ we obtain that $\ExtX E{L(\vw)}=0$.
Next, applying the functor $\HomX-{L(\vw+\vz)}$ to $\eta$ we obtain a long  exact sequence
\begin{equation}\nonumber
\begin{split}
0\lra&  \HomX {E}{L(\vw+\vz)} \lra \podwzorem{\HomX {L(\vw)}{L(\vw+\vz)}}{\iso\ k} \lra\\
\lra& \podwzorem{\ExtX {L(\vx)}{L(\vw+\vz)} }{\iso\ k} \lra \podwzorem{\ExtX {E}{L(\vw+\vz)}}{=\ 0} \lra \cdots
\end{split}
\end{equation}
By comparing dimensions we get $\HomX E{L(\vw+\vz)}=0$.

Let $h:E\lra L(\vw+\vz)$ be a non-zero morphism  for some $z \in \LL$.
 Because $\eta$ is a non-split exact sequence, the map $h\circ i:L(\vw)\uplra{i} E \uplra{h} L(\vw+\vz)$ is a non-isomorphism.
 If $h\circ i$ is the zero map, then $h$ factors through by $\pi$ and we are done. Suppose  now
  that $h\circ i\neq 0$. Then $\vz\geq 0$, because $\HomX{L(\vw)}{L(\vw+\vz)}\neq 0$. Moreover, from property $(\star)$ we have   $\vz \nleq \vx$, and so $\vx-\vz\ngeq 0$. Now, we prove that there are maps $h_j:L((l_j+1)\vx_j +\vw)\lra L(\vw+\vz)$ for $j\in I$ such that $h\circ i=\sum_{j\in I}h_j\circ x_j^{l_j+1}$.
  Since $z\geq 0$ and $\vx-\vz\ngeq 0$,
   after standard calculation in the group $\Lp$, we see that there is an index $j_0\in I$ such that $\vz-(l_{j_0}+1)\vx_{j_0}\geq 0$.
    Therefore
    there is a map $h_{j_0}:L((l_{j_0}+1)\vx_{j_0} +\vw)\lra L(\vw+\vz)$ such that $h\circ i=h_{j_0}\circ x_{j_0}^{l_{j_0}+1}$.
    Further we define $h_j=0$ for $j \neq j_0$.

Then we have
$$\left(h-\sum_{j\in I}h_j\circ \widetilde{x_j^{l_j+1}}\right)\circ i=h\circ i-\sum_{j\in I} h_j\circ (\widetilde{x_j^{l_j+1}}\circ i)=h\circ i-\sum_{j\in I}h_j\circ x_j^{l_j+1}=0,$$
and we conclude that $h-\sum_{j\in I}h_j\circ \widetilde{x_j^{l_j+1}}\in\ker(-\circ i)=\im(-\circ \pi)$. Thus
there is a map $g:L(\vx)\lra L(\vw+\vz)$ such that $g\circ\pi=h-\sum_{j\in I}h_j\circ \widetilde{x_j^{l_j+1}}$ and hence $h=g\circ\pi+\sum_{j\in I}h_j\circ \widetilde{x_j^{l_j+1}}$.

The $\mathrm{Hom}_\XX-$orthogonality is easy to check.
The minimality for the map  $j_E:E\lra L(\vx)\oplus\bigoplus_{j\in I}L((l_j+1)\vx_j+\vw)$
follows then from the $\mathrm{Hom}_\XX-$ orthogonality
 of the line bundles $L(\vx)$ and $L((l_j+1)\vx_j+\vw)$ for $j\in I$.
\end{proof}

\begin{remark}
	In \cite{Kussin:Lenzing:Meltzer:2013adv} it was shown that in the case of  weight type $(2,a,b)$ the suspension functor $[1]$
in the stable vector bundle category $\svect{\XX}$   coincides with the shift functor by $\vx_1$.
 Therefore there is a short exact sequence
	$$0\lra \pc{E}(-\vx_1)\lra E\lra \pc E \lra 0$$ for each indecomposable bundle $E$. Hence in this case we have
$\rk\pc E=2\rk  E$.
From the theorem above  we see that in the case of $t$ weights with $t>3$ there is an indecomposable, not exceptional rank two bundle
such that $\rk \pc E>4=2\rk E$.

For example in the case $(2,2,2,3)$ the projective cover of an indecomposable bundle of
the data $(L,\vx_4)$ has rank $5$. Therefore in the case $(2,p_2,p_3,\cdots, p_t)$ and $t>3$
the suspension functor cannot  be realized by a shift with an element from $\LL$.
\end{remark}


In the same way as   Lemma \ref{lem:projektywny_skladnik_jest_modulem}
and Lemma \ref{lem:przedstawienie_wiazki_rozszerzen_na_4_sposoby}.
the  section \ref{sec:three_weights}, we can prove  the following two lemmata.

\begin{lemma}
	\label{lem:projektywny_skladnik_jest_modulem_t_wag}
	Let $E$ be a non-zero vector bundle over $\XX$ of a type $(p_1,p_2,\dots,p_t)$
 with a projective cover $\pc{E}$. If $\ExtX TE=0$ for the canonical bundle $T$,
  then there is a line bundle $L$ in the decomposition $\pc E$ into a direct sum
   of line bundles  such that $\ExtX TL=0$. \kwadracik
\end{lemma}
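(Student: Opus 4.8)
The plan is to mimic the proof of Lemma \ref{lem:projektywny_skladnik_jest_modulem} verbatim, since the statement is its exact analogue for $t$ weights and the earlier lemma's argument uses the three-weight assumption only cosmetically. First I would invoke the hypothesis $\ExtX TE=0$ together with the fact that $T$ is a tilting bundle to conclude $\HomX TE\neq 0$, which yields an element $\vx$ with $0\leq\vx\leq\vc$ and $\HomX{\Oo(\vx)}E\neq 0$. Any nonzero morphism $f\colon\Oo(\vx)\lra E$ factors through the projective cover $\pi_E\colon\pc E\lra E$, so some line bundle summand $L=\Oo(\vy)$ of $\pc E$ admits a nonzero map $\HomX{\Oo(\vx)}{\Oo(\vy)}\neq 0$; this is the summand for which I will verify $\ExtX TL=0$.

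The heart of the argument is the same degree computation as before, now carried out in $\LL(\pp)$ with $t$ generators. Writing $\vy$ in normal form $\vy=a\vc+\sum_{i=1}^t\alpha_i\vx_i$ with $0\leq\alpha_i\leq p_i-1$, the nonvanishing of $\HomX{\Oo(\vx)}{\Oo(\vy)}\iso S_{\vy-\vx}$ forces $a\geq 0$, and then the key inequality becomes
$$(\triangle)\quad\vc+\vw-\vy=(t-1-a)\vc-\sum_{i=1}^t(\alpha_i+1)\vx_i<0,$$
using $\vw=(t-2)\vc-\sum_{i=1}^t\vx_i$. The strict negativity here is the one place where I must check that the $t$-weight normal-form bound $\alpha_i+1\leq p_i$ still makes the coefficient of each $\vc$ contribution push the element below zero; this is the step I expect to be the main (though mild) obstacle, since the constant $t-1-a$ replaces the $2-\alpha$ of the three-weight case and one must confirm the inequality survives for all $t\geq 3$.

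Finally I would argue by contradiction exactly as in Lemma \ref{lem:projektywny_skladnik_jest_modulem}. Supposing $\ExtX T{\Oo(\vy)}\neq 0$, I pick $\vz$ with $0\leq\vz\leq\vc$ and $\ExtX{\Oo(\vz)}{\Oo(\vy)}\neq 0$; Serre duality gives $\vz+\vw-\vy\geq 0$. Splitting
$$\vc+\vw-\vy=(\vc-\vz)+(\vz+\vw-\vy)\geq 0$$
as a sum of two nonnegative elements contradicts $(\triangle)$. Hence $\ExtX T{\Oo(\vy)}=0$, so $L=\Oo(\vy)$ is the desired summand and the lemma follows. Since the paper explicitly states this lemma is proved "in the same way" as the three-weight version, the write-up can be brief, referring back to Lemma \ref{lem:projektywny_skladnik_jest_modulem} for the identical steps and only recording the amended inequality $(\triangle)$.
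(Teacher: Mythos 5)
Your proposal is correct and is exactly the paper's intended argument: the paper gives no separate proof of this lemma, stating only that it is proved ``in the same way'' as Lemma \ref{lem:projektywny_skladnik_jest_modulem}, which is precisely what you carry out, with the key inequality correctly amended to $\vc+\vw-\vy=(t-1-a)\vc-\sum_{i=1}^t(\alpha_i+1)\vx_i$. The point you flag as the potential obstacle does go through for every $t\geq 3$: rewriting in normal form replaces each $-(\alpha_i+1)\vx_i$ by $-\vc+(p_i-\alpha_i-1)\vx_i$, so the coefficient of $\vc$ becomes $(t-1-a)-t=-1-a<0$, whence $\vc+\vw-\vy\ngeq 0$ and the contradiction with $\vz+\vw-\vy\geq 0$ and $\vc-\vz\geq 0$ is reached as in the three-weight case.
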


From Theorem \ref{prop:extension_bundle} each extension bundle can be given by a line bundle $L$ and an element $\vx=\sum_{i\in I}l_i\vx_i+\sum_{i\notin I}(p_i-1)\vx_i\in\LL$ for some $I\subset \{1,2,\dots, t\}$ with $\# I=3$.

\begin{lemma} \label{lem:przedstawienie_wiazki_rozszerzen_na_4_sposoby_t_wag}
	Let $\XX$ be a weighted projective line of  type $(p_1,p_2,\dots, p_t)$ and let
	$E$ be an extension bundle given by a pair $(L,\vx)$,
 where $\vx=\sum_{i\in I}l_i\vx_i+\sum_{i\notin I}(p_i-1)\vx_i$. Denote by $\widehat{L_i}$ the direct summand
 $L(\vx-(1+l_i)\vx_i)$ of $\pc{E}$.	
	\begin{enumerate}[label=\upshape(\roman*), leftmargin=*, widest=iii]
		\item There is an exact sequence
		$$0\lra \widehat{L_i}\lra E\lra L\big((1+l_j)x_j+\vw\big)\lra 0,$$
		where $L\big((1+l_j)x_j+\vw\big)$ is a direct summand od $\ih{E}$.\label{lem:it:1}
		\item The extension bundle $E$ can be determined by the following pairs
$$\Big(L\big(\vx-(1+l_i)\vx_i\big)(-\vw),\quad 2\vw+2(1+l_i)\vx_i-\vx \Big) \quad\textnormal{for}\quad i\in I.$$ \label{lem:it:2}
\item The element $2\vw+2(1+l_i)\vx_i-\vx$ in  normal form has exactly
$t-3$ coefficients equal to $p_i-1$ for $i\notin I$. \label{lem:it:3}
	\end{enumerate}
\end{lemma}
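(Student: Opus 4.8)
The plan is to follow the three-weight argument of Lemma~\ref{lem:przedstawienie_wiazki_rozszerzen_na_4_sposoby} almost verbatim, feeding in the projective cover and injective hull supplied by Theorem~\ref{thm:projective_covers_t-weight}; the only genuinely new ingredient is the normal-form check in part~\ref{lem:it:3}. For part~\ref{lem:it:1} I would fix $i\in I$ and use that, by Theorem~\ref{thm:projective_covers_t-weight}, the line bundle $\widehat{L_i}=L(\vx-(1+l_i)\vx_i)$ is a direct summand of $\pc E$. Restricting the projective cover map $\pc E\lra E$ to this summand gives a nonzero morphism $\widehat{L_i}\lra E$ (by minimality of the cover); since any nonzero map from a line bundle into a vector bundle is injective and, by \cite[Proposition 3.8]{Kussin:Lenzing:Meltzer:2013adv}, its cokernel is again a line bundle, this produces a short exact sequence $0\lra\widehat{L_i}\lra E\lra C\lra 0$. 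I would then identify $C$ by comparing determinants: from $\eta$ one has $\det E=\det L(\vw)+\det L(\vx)$, whereas $\det\widehat{L_i}=\det L(\vx)-(1+l_i)\vx_i$, so additivity of $\det$ forces $\det C=\det L(\vw)+(1+l_i)\vx_i$, i.e.\ $C=L((1+l_i)\vx_i+\vw)$, which Theorem~\ref{thm:projective_covers_t-weight} indeed lists among the summands of $\ih E$.

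Part~\ref{lem:it:2} is then a renormalization of this sequence. Setting $L':=\widehat{L_i}(-\vw)=L(\vx-(1+l_i)\vx_i)(-\vw)$ rewrites the inclusion as $L'(\vw)\lra E$, and solving $L'(\vx')=L((1+l_i)\vx_i+\vw)$ for the twist yields $\vx'=2\vw+2(1+l_i)\vx_i-\vx$. Since this sequence is non-split and, by part~\ref{lem:it:3}, the pair $(L',\vx')$ satisfies condition~(iii) of Theorem~\ref{prop:extension_bundle}, the corresponding space $\ExtX{L'(\vx')}{L'(\vw)}$ is one-dimensional, so the non-split extension is unique up to isomorphism and $E\iso\extb{L'}{\vx'}$ is the extension bundle attached to $(L',\vx')$.

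The heart of the matter is part~\ref{lem:it:3}, which certifies that $(L',\vx')$ is again admissible data. Substituting $\vw=(t-2)\vc-\sum_{k=1}^t\vx_k$ and $\vx=\sum_{k\in I}l_k\vx_k+\sum_{k\notin I}(p_k-1)\vx_k$ into $\vx'=2\vw+2(1+l_i)\vx_i-\vx$ and reducing modulo the relations $p_k\vx_k=\vc$, I expect the coefficient of $\vx_i$ to be $l_i$, the coefficient of $\vx_k$ for $k\in I\setminus\{i\}$ to be $p_k-2-l_k$, and the coefficient of $\vx_k$ for $k\notin I$ to be $p_k-1$, while the $\vc$-component cancels to $0$. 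Thus exactly the $t-3$ indices $k\notin I$ carry coefficient $p_k-1$, which is the assertion (and, together with $0\leq\vx'\leq\vdom$, re-establishes the equivalent conditions of Theorem~\ref{prop:extension_bundle}). The one delicate point, and the main obstacle, is the bookkeeping of this reduction to normal form: each negative coefficient must be lifted into the range $[0,p_k)$ by adding a multiple of $p_k\vx_k=\vc$, contributing one copy of $\vc$ for each $k\in I\setminus\{i\}$ and two for each $k\notin I$, a total of $2(t-2)$ that exactly cancels the initial $\vc$-term. This is finite and mechanical once the substitution is in place.
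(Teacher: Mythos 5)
Your proposal is correct and follows essentially the same route as the paper, which simply defers parts (i) and (ii) to the three-weight argument of Lemma \ref{lem:przedstawienie_wiazki_rozszerzen_na_4_sposoby} (projective-cover summand, cokernel a line bundle by \cite[Proposition 3.8]{Kussin:Lenzing:Meltzer:2013adv}, identification by determinants) and leaves the normal-form calculation of (iii) to the reader. Your write-up supplies exactly those details, and your bookkeeping of the $\vc$-coefficients ($2(t-2)\vc$ cancelling) is accurate.
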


\begin{proof}
 The statements \ref{lem:it:1} and \ref{lem:it:2} are proved in the same way as in the case of three weights.
 The part \ref{lem:it:3} follows from  calculations in the group $\LL$ and is left to the reader.
\end{proof}

The next result is a generalization of Theorem \ref{thm:three:weights} and can be proved analogously
using Lemma 4.2 and Lemma 4.3.

\begin{theorem}
Let $\La=\La(\pp,\lala)$ be a canonical algebra with $t$ arms.
 Then each exceptional $\La-$module of rank two can be described
  by matrices having entries $0$, $ \la_i$ and $- \la_i$. \kwadracik
\end{theorem}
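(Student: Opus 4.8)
The plan is to imitate the proof of Theorem~\ref{thm:three:weights}, replacing the three-weight auxiliary results by Lemma~\ref{lem:projektywny_skladnik_jest_modulem_t_wag} and Lemma~\ref{lem:przedstawienie_wiazki_rozszerzen_na_4_sposoby_t_wag}. Let $M$ be an exceptional $\La$-module of rank two. Under the equivalence $\mathcal{D}^b(\coh\XX)\iso\mathcal{D}^b(\mod\La)$, and since its rank is positive, $M=\HomX TE$ is attached to an exceptional vector bundle $E$ of rank two lying in $\cohplus\XX$, so that $\ExtX TE=0$. The point is to produce a short exact sequence
$$\eta:\quad 0\lra A\lra E\lra B\lra 0$$
in which $A$ and $B$ are line bundles belonging to $\modplus\La$ and $(B,A)$ is an orthogonal exceptional pair in $\coh\XX$. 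Granting such an $\eta$, its end terms are rank-one modules and hence describable by matrices with entries $0,\la_i,-\la_i$ (see \cite{Meltzer:2007}), and \cite[Proposition~7.1]{Kedzierski:Meltzer:2020} then yields a description of $E$, hence of $M$, by matrices with the same coefficients.

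To construct $\eta$, I would first apply Theorem~\ref{prop:extension_bundle} to realize $E=\extb{L}{\vx}$ as an extension bundle, giving a non-split sequence $\eta_{L,\vx}:0\to L(\vw)\to E\to L(\vx)\to 0$ for which $(L(\vx),L(\vw))$ is an orthogonal exceptional pair. Applying $\HomX T-$ to $\eta_{L,\vx}$ and using $\ExtX TE=0$ together with the fact that $\coh\XX$ is hereditary gives $\ExtX T{L(\vx)}=0$, so $L(\vx)\in\cohplus\XX=\modplus\La$. If in addition $\ExtX T{L(\vw)}=0$, then $\eta_{L,\vx}$ is already the desired sequence. Otherwise $L(\vw)$ has negative rank and the sequence must be replaced.

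For the replacement I would invoke Lemma~\ref{lem:projektywny_skladnik_jest_modulem_t_wag}: since $\ExtX TE=0$, some line bundle summand $\widehat L$ of the projective cover $\pc E$ satisfies $\ExtX T{\widehat L}=0$. By the description of $\pc{\extb{L}{\vx}}$ in Theorem~\ref{thm:projective_covers_t-weight}, $\widehat L$ is either $L(\vw)$, which is excluded here, or $L(\vx-(1+l_i)\vx_i)$ for some $i\in I$. In the latter case Lemma~\ref{lem:przedstawienie_wiazki_rozszerzen_na_4_sposoby_t_wag}\ref{lem:it:1} supplies a short exact sequence $0\to L(\vx-(1+l_i)\vx_i)\to E\to L((1+l_i)\vx_i+\vw)\to 0$ whose right-hand term is a summand of the injective hull $\ih E$. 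Applying $\HomX T-$ once more forces $\ExtX T{L((1+l_i)\vx_i+\vw)}=0$ (again because $\ExtX TE=0$ and $\coh\XX$ is hereditary), so both end terms now lie in $\modplus\La$. Lemma~\ref{lem:przedstawienie_wiazki_rozszerzen_na_4_sposoby_t_wag}\ref{lem:it:2} moreover presents $E$ as an extension bundle for a shifted pair, whence Theorem~\ref{prop:extension_bundle}(ii) shows that $\big(L((1+l_i)\vx_i+\vw),\,L(\vx-(1+l_i)\vx_i)\big)$ is an orthogonal exceptional pair; this is the required $\eta$.

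I expect the genuine difficulty to be concentrated in this replacement step, that is, in guaranteeing that the end terms of $\eta$ can always be arranged to have positive rank. This is exactly where the $t$-weight machinery enters: Lemma~\ref{lem:projektywny_skladnik_jest_modulem_t_wag} secures a projective summand in $\cohplus\XX$, and the explicit projective cover and injective hull of Theorem~\ref{thm:projective_covers_t-weight}, combined with Lemma~\ref{lem:przedstawienie_wiazki_rozszerzen_na_4_sposoby_t_wag}, force the complementary term to be a summand of $\ih E$, which then automatically lands in $\modplus\La$ because $\coh\XX$ is hereditary. The remaining points—verifying the orthogonality of the relevant exceptional pairs and the bookkeeping with normal forms in $\LL$—are routine and parallel the three-weight case of Theorem~\ref{thm:three:weights}.
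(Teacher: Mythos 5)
Your proposal is correct and is essentially the paper's own argument: the paper proves this theorem by noting it "can be proved analogously" to Theorem~\ref{thm:three:weights} using Lemma~\ref{lem:projektywny_skladnik_jest_modulem_t_wag} and Lemma~\ref{lem:przedstawienie_wiazki_rozszerzen_na_4_sposoby_t_wag}, which is precisely the adaptation you carry out (realize $E$ as an extension bundle via Theorem~\ref{prop:extension_bundle}, replace $L(\vw)$ by a projective-cover summand in $\modplus\La$ when needed, and conclude via \cite[Proposition 7.1]{Kedzierski:Meltzer:2020}). The only quibble is your phrase that otherwise ``$L(\vw)$ has negative rank'' --- when $\ExtX T{L(\vw)}\neq 0$ the bundle $L(\vw)$ simply fails to lie in $\cohplus\XX$ --- but this does not affect the argument.
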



\section{Exceptional cokernels}

In this section we will deal with  cokernels of  maps of the form
\begin{equation}\label{eq:cokerlnel_map}
\left[x_i^{b_i}\right]_{i\in I}:\Oo(\vy)\lra \bigoplus_{i\in I}\Oo(\vy+b_{i}\vx_i),
\end{equation}
where $I\subset \{1,2,\dots,t\}$, $0<b_i<p_i-1$ for $i\in I$ and $\vy\in\LL_+$.
We will prove that such cokernels are exceptional modules.
 Moreover every exceptional module of rank two, can be obtain in this way.
 Finally, by the cokernel construction,
  we compute matrices  for each exceptional module of rank two.

\begin{lemma} \label{lem:exceptional_cokernel}
	Consider an exact sequence
	$$(\star)\quad 0\lra F\uplra{f} G\uplra{\pi} E\lra 0$$
	in $\coh\XX$ such that the following conditions are satisfied.
	\begin{enumerate}[label=\upshape{C\arabic*}.]
		\item $F$ is exceptional, \label{lem:con:1}
		\item $\HomX GF=0=\ExtX GF$, \label{lem:con:2}
		\item $\ExtX GG=0$,\label{lem:con:3}
		\item The map $-\circ f:\EndX G\lra \HomX FG$ is an isomorphism. \label{lem:con:4}
	\end{enumerate}
	Then the following properties holds:
	\begin{enumerate}[label=\upshape(\roman*), leftmargin=*, widest=iii]
		\item $\ExtX EF\iso k$ and $\HomX EG=0=\ExtX EG$, \label{lem:it:5}
		\item $E$ is exceptional, \label{lem:it:6}
		\item Up to an isomorphism $E$ does not depend on the map $f$. \label{lem:it:7}
	\end{enumerate}
\end{lemma}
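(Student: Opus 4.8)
The plan is to exploit the long exact sequences obtained by applying $\HomX{-}{F}$, $\HomX{-}{G}$, $\HomX{G}{-}$ and $\HomX{E}{-}$ to the defining sequence $(\star)$, reading off the required vanishing and dimension counts from the four hypotheses \ref{lem:con:1}--\ref{lem:con:4}. I would prove the three conclusions \ref{lem:it:5}, \ref{lem:it:6}, \ref{lem:it:7} in that order, since \ref{lem:it:6} leans on the $\Ext$-computations in \ref{lem:it:5}, and \ref{lem:it:7} is a formal consequence of the rigidity encoded in \ref{lem:con:4}.

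First I would establish \ref{lem:it:5}. Applying $\HomX{-}{G}$ to $(\star)$ gives a six-term exact sequence
\begin{equation}\nonumber
0\lra \HomX EG\lra \HomX GG\uplra{-\circ f}\HomX FG\lra \ExtX EG\lra \ExtX GG\lra \cdots.
\end{equation}
By \ref{lem:con:4} the map $-\circ f$ is an isomorphism, so the first term forces $\HomX EG=0$; combined with $\ExtX GG=0$ from \ref{lem:con:3}, exactness then forces $\ExtX EG=0$. Next, applying $\HomX{-}{F}$ to $(\star)$ and using $\HomX GF=0=\ExtX GF$ from \ref{lem:con:2}, the long exact sequence collapses to give $\ExtX EF\iso \HomX FF=\EndX F$, which is $k$ because $F$ is exceptional by \ref{lem:con:1}. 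This proves all of \ref{lem:it:5}.

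For \ref{lem:it:6} I would apply $\HomX{E}{-}$ to $(\star)$. Since $\HomX EG=0=\ExtX EG$ by the part just proved, the connecting maps yield $\EndX E\iso \HomX EF$ (up to the next term) and identify $\ExtX EE$ with a subquotient controlled by $\ExtX EF$; here I must be slightly careful, and the honest route is to use the Euler form, writing $[E]=[G]-[F]$ in $K_0(\XX)$ and computing $\langle[E],[E]\rangle_\XX$ from the already-known Hom/Ext dimensions between $F$, $G$ and $E$, exactly as in the proof of Theorem \ref{prop:extension_bundle}; this gives $\langle[E],[E]\rangle_\XX=1$, and together with the vanishing of $\ExtX EG$ and the structure of the remaining sequences one concludes $\EndX E=k$ and $\ExtX EE=0$, so $E$ is exceptional.

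The main obstacle is \ref{lem:it:7}: showing $E$ is independent of $f$. The point is that \ref{lem:con:4} says $f$ determines, up to the action of the automorphism group of $G$, the entire embedding $F\hookrightarrow G$. I would argue that any two admissible maps $f,f':F\to G$ differ by composition with an automorphism of $F$ and of $G$: given $f'$, since $-\circ f$ is onto $\HomX FG$ there is an endomorphism $\phi$ of $G$ with $\phi\circ f=f'$, and injectivity of $-\circ f$ together with $\EndX F=k$ forces $\phi$ to be invertible (a non-invertible $\phi$ would, by $\ExtX GG=0$ and $\HomX GF=0$, produce a nonzero kernel relation contradicting that $f'$ is again a monomorphism with the same cokernel class). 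Such an automorphism $\phi$ then induces an isomorphism between the two cokernels by the universal property, so the two copies of $E$ are isomorphic. I expect the delicate step to be justifying invertibility of $\phi$ cleanly from \ref{lem:con:2}--\ref{lem:con:4} rather than by an ad hoc matrix computation.
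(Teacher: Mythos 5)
Your part (i) is correct and is exactly the paper's argument: the sequence from $\HomX{-}{G}$ together with C3 and C4 gives $\HomX{E}{G}=0=\ExtX{E}{G}$, and the sequence from $\HomX{-}{F}$ together with C1 and C2 gives $\ExtX{E}{F}\iso\EndX{F}\iso k$. Part (ii), however, contains a misstatement. In the long exact sequence obtained from $\HomX{E}{-}$, the vanishing $\HomX{E}{G}=0$ forces $\HomX{E}{F}=0$ (it injects into $\HomX{E}{G}$), and the connecting homomorphism then gives $\EndX{E}\iso\ExtX{E}{F}\iso k$ — not ``$\EndX{E}\iso\HomX{E}{F}$'' as you wrote; moreover $\ExtX{E}{E}$ is a quotient of $\ExtX{E}{G}=0$, hence vanishes. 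This single sequence already yields everything in (ii), and it is precisely the paper's proof, so your retreat to the Euler form is unnecessary; worse, $\langle[E],[E]\rangle_\XX=1$ by itself does not imply $\EndX{E}=k$ and $\ExtX{E}{E}=0$ (it only controls the difference of the two dimensions), so the Euler-form fallback is only valid after one has first extracted $\ExtX{E}{E}=0$ from the long exact sequence — a step you never state explicitly.

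For (iii) you genuinely diverge from the paper. The paper argues: $[E]=[G]-[F]=[\widehat{E}]$ in $K_0(\XX)$, both cokernels are exceptional by (ii), and an exceptional sheaf on $\XX$ is determined up to isomorphism by its class in the Grothendieck group, citing \cite[Proposition 4.4.1]{Meltzer:2004}. Your automorphism argument is more elementary and avoids that external result, but as written its key step — invertibility of $\phi$ — is not proved: ``injectivity of $-\circ f$ together with $\EndX{F}=k$'' does not force it, and the parenthetical about a ``nonzero kernel relation'' is not an argument. The gap closes cleanly if you use C4 for $f'$ symmetrically, which is legitimate since $f'$ is assumed to satisfy the same hypotheses: surjectivity of $-\circ f'$ yields $\psi\in\EndX{G}$ with $\psi\circ f'=f$; then $(\psi\circ\phi-\id_G)\circ f=0$ and injectivity of $-\circ f$ give $\psi\circ\phi=\id_G$, and likewise $(\phi\circ\psi-\id_G)\circ f'=0$ and injectivity of $-\circ f'$ give $\phi\circ\psi=\id_G$ (alternatively, a one-sided inverse in the finite-dimensional algebra $\EndX{G}$ is automatically two-sided). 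The induced map on cokernels is then an isomorphism $E\iso\widehat{E}$, as you say. With this repair your route is sound and arguably preferable in self-containedness to the paper's appeal to \cite{Meltzer:2004}; but as submitted the invertibility step is a genuine gap, one you flagged yourself.
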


\begin{proof}
	 \ref{lem:it:5}. Applying the functor
$\HomX-F$ to the exact sequence $(\star)$ we obtain a long exact sequence
	$$\cdots\lra\podwzorem{\HomX GF}{=\ 0} \lra \EndX F \lra \ExtX EF  \lra \podwzorem{\ExtX GF}{=\ 0}\lra\cdots$$
	Therefore $\ExtX EF\iso\EndX F\iso k$.
	
	Furthermore, applying the functor $\HomX -G$ to $(\star)$  we obtain a long exact sequence
	\begin{equation}\nonumber
	\begin{split}
	0\lra  \HomX EG\uplra{-\circ\pi} \EndX G\uplra{-\circ f}\HomX FG \uplra{\de}  \ExtX EG &\lra\\
	\lra  \podwzorem{\ExtX GG}{=\ 0}\lra \ExtX FG &\lra 0.
	\end{split}
	\end{equation}
	Because $-\circ f$ is an  isomorphism we get $\HomX EG=0$ and $\ExtX EG=0$
	
	 \ref{lem:it:6}. Applying the functor $\HomX E-$ to the exact sequence $(\star)$  we have a long exact sequence
	\begin{equation}\nonumber
	\begin{split}
	0 &\lra \HomX EF \lra \nadwzorem{\HomX EG}{=\ 0} \lra \EndX E \lra \\
	&\lra \ExtX EF \lra \podwzorem{\ExtX E{G}}{=\ 0} \lra \ExtX EE\lra 0.
	\end{split}
	\end{equation}
	We conclude that
	$\ExtX EE=0$, and using (i) also that  $\EndX E\iso  \ExtX EF \iso k.$
	
	 \ref{lem:it:7}.
 Suppose that we have exact sequences
	$$0\lra F\uplra{f} G\lra E\lra 0\quad \text{and}\quad 0\lra F\uplra{\widehat{f}} G\lra \widehat{E}\lra 0$$
 Then we have  $[E]=[G]-[F]=[\widehat{E}]$ in the Grothendieck group
  ${\mathrm K}_0(\XX)$.
	From  \ref{lem:it:6} we know that the sheaves $E$ and $\widehat {E}$ are exceptional.
We infer from \cite[Proposition 4.4.1]{Meltzer:2004} that $E\iso \widehat{E}$.
\end{proof}

 We will study the following cases of  maps satisfying conditions \ref{lem:con:1}-\ref{lem:con:4} of the previous proposition.
	\begin{itemize}
		\item[\textbf{a.}] A map $\big[x_1^{b_1},\dots,x_t^{b_t}\big]^T:\Oo\lra \bigoplus_{i=1}^t\Oo(b_i\vx_i)$, where $1\leq b_i\leq p_i-1$.
		
		\item[\textbf{b.}] For $J\subseteq \{1,\dots,t\}$ we consider
		$[x_i^{b_i}]_{i\in J}:\Oo\lra \bigoplus_{i\in J}\Oo(b_i\vx_i)$,
		where $1\leq b_i\leq p_i-1$ for $i=1,\dots,t$.
		
		\item[\textbf{c.}] If $f:F\lra G$ satisfies the conditions C1.-C4.
 then for each $\vx\in\LL$ the map $f(\vx):F(\vx)\lra G(\vx)$ also satisfies these conditions.
	\end{itemize}

Note that if  $G$ is in $\modplus\La$ in the sequence $(\star)$, then $E$ is also in $\modplus\La$.
Therefore in the  cases \textbf{a.} and \textbf{b.} above the cokernels are  exceptional $\La-$modules.
\medskip

Let $\XX=\XX(p_1,p_2,\dots, p_t)$ be a weighted projective line
 and let $b_i$, $i=1,\dots,t$ are natural number such that
$1\leq b_i \leq p_i-1$. Denote $I=\{i : b_i<p_i-1\}$ and assume that $\#I=3$.

\begin{proposition}\label{lem:exceptional_cokernel_is_extension_bundle}
	The cokernel $E$ of the exact sequence
 $$(\star)\quad 0\lra L\uplra{f=\big[x_i^{b_i}\big]_{i\in I}}\bigoplus_{i\in I}L(b_i\vx_i)\uplra{\pi}E\lra 0$$
	is an extension bundle with data
	$$\Big(L(b_{i_0}x_{i_0}-\vw),\quad \vw+\sum_{i\in I}b_i\vx_i-2b_{i_0}\vx_{i_0} \Big)\quad \textnormal{for each}\quad i_0\in I.$$
	Moreover for each $i_0\in I$ the line bundle $L(b_{i_0}\vx_{i_0})$  is a direct summand of the projective cover $\pc{E}$.
\end{proposition}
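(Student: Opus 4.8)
The plan is to deduce that $E$ is exceptional from Lemma~\ref{lem:exceptional_cokernel}, applied with $F=L$ and $G=\bigoplus_{i\in I}L(b_i\vx_i)$, and then to exhibit $E$ as an extension bundle by producing, for each $i_0\in I$, an explicit non-split sequence of the form $\eta_{L',\vx'}$. First I would verify the four hypotheses C1--C4 of that lemma. The condition C1 is immediate, since line bundles are exceptional. The conditions C2 and C3 assert the vanishing of the Hom- and Ext-groups among $L$ and the $L(b_i\vx_i)$; using $\HomX{\Oo(\vu)}{\Oo(\vz)}\iso S_{\vz-\vu}$ together with Serre duality, each of these reduces to the vanishing of a graded component $S_{\vz}$, which happens exactly when the $\vc$-coefficient of the normal form of $\vz$ is negative, and the inequalities $1\le b_i\le p_i-2$ holding for $i\in I$ make every one of these normal-form checks come out as needed. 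For C4 I would use that the summands $L(b_i\vx_i)$ are pairwise $\mathrm{Hom}_\XX$-orthogonal, so that $\EndX G\iso k^{\#I}$, while $\HomX LG$ is spanned by the monomials $x_i^{b_i}$; the map $-\circ f$ is then the evident diagonal isomorphism. Lemma~\ref{lem:exceptional_cokernel} now gives that $E$ is exceptional, and $(\star)$ shows $\rk E=\#I-1=2$.

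To pin down the data I would fix $i_0\in I$, split off the corresponding summand by writing $G=L(b_{i_0}\vx_{i_0})\oplus G''$ with $G''=\bigoplus_{i\in I\setminus\{i_0\}}L(b_i\vx_i)$, and note that $E$ is precisely the pushout of $f''=\big(x_i^{b_i}\big)_{i\neq i_0}\colon L\lra G''$ along $x_{i_0}^{b_{i_0}}\colon L\lra L(b_{i_0}\vx_{i_0})$. Since the pushout of a monomorphism is a monomorphism with the same cokernel, this yields a short exact sequence
$$0\lra L(b_{i_0}\vx_{i_0})\lra E\lra \mathrm{coker}\, f''\lra 0,$$
whose left-hand map is the composite $L(b_{i_0}\vx_{i_0})\hookrightarrow G\uplra{\pi}E$. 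The two components $x_{i_1}^{b_{i_1}},x_{i_2}^{b_{i_2}}$ of $f''$, where $\{i_1,i_2\}=I\setminus\{i_0\}$, have disjoint zero loci on $\XX$, so $f''$ is fibrewise injective; hence $\mathrm{coker}\, f''$ is a rank-one vector bundle, that is, a line bundle, and a comparison of determinants identifies it with $L\big(\sum_{i\in I\setminus\{i_0\}}b_i\vx_i\big)$. Setting $L'=L(b_{i_0}\vx_{i_0})(-\vw)$, the sequence above becomes $\eta_{L',\vx'}$ with $\vx'=\vw+\sum_{i\in I}b_i\vx_i-2b_{i_0}\vx_{i_0}$, and it is non-split because $E$ is indecomposable. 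A normal-form computation finally shows $0\le\vx'\le\vdom$, with exactly $t-3$ of its coefficients equal to the corresponding $p_i-1$, so by Theorem~\ref{prop:extension_bundle} the pair $(L',\vx')$ is valid extension-bundle data and $E=E_{L'}\langle\vx'\rangle$.

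The ``moreover'' part is then immediate from Theorem~\ref{thm:projective_covers_t-weight}: for the data $(L',\vx')$ the distinguished summand $L'(\vw)$ of $\pc E$ is exactly $L(b_{i_0}\vx_{i_0})$, and as the same bundle $E$ is described in this way for each $i_0\in I$, all three line bundles $L(b_{i_0}\vx_{i_0})$ appear as summands of $\pc E$. I expect the identification of $\mathrm{coker}\, f''$ to be the crux: one must first know that it is torsion-free, so that it is a genuine line bundle rather than merely a rank-one subsheaf of one, before the determinant computation can name it, and this is precisely what the disjointness of the zero loci of $x_{i_1}$ and $x_{i_2}$ provides. A secondary, more clerical, difficulty is keeping the normal-form bookkeeping in $\LL$ straight, both when verifying C2--C4 and when placing $\vx'$ inside $[0,\vdom]$ with the correct number of extremal coefficients.
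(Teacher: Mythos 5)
Your proposal is correct, but it takes a genuinely different route through the middle of the argument. The paper, like you, begins by applying Lemma~\ref{lem:exceptional_cokernel} to $(\star)$ to conclude that $E$ is exceptional, and it also identifies the quotient line bundle by a determinant comparison; the divergence lies in how the monomorphisms $L(b_{i_0}\vx_{i_0})\to E$ and the line-bundle property of their cokernels are obtained. The paper writes $\pi=[\pi_i]_{i\in I}$, observes that $\EndX{E}\iso k\iso \HomX{L(b_i\vx_i)}{E}$ forces each map $-\circ\pi_i$ to be zero or an isomorphism, argues that at least one index $j_0$ gives an isomorphism, and then applies Lemma~\ref{lem:exceptional_cokernel} a \emph{second} time to the sequence $0\to L(b_{j_0}\vx_{j_0})\to E\to C_{j_0}\to 0$ to conclude that $C_{j_0}$ is exceptional, hence (being indecomposable of rank one) a line bundle. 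This yields the extension-bundle presentation for that single $j_0$ only; the presentations for the remaining indices are then deduced by combining Theorem~\ref{thm:projective_covers_t-weight} with the propagation Lemma~\ref{lem:przedstawienie_wiazki_rozszerzen_na_4_sposoby_t_wag}. You instead treat every $i_0\in I$ symmetrically from the outset: the pushout decomposition of $\mathrm{coker}(f)$ produces the monomorphism $L(b_{i_0}\vx_{i_0})\to E$ directly, and you establish that $\mathrm{coker}(f'')$ is a line bundle by a local-geometric argument (the sections $x_{i_1}^{b_{i_1}}$ and $x_{i_2}^{b_{i_2}}$ have no common zero, so $f''$ is locally a split monomorphism and its cokernel is locally free) rather than by a second application of Lemma~\ref{lem:exceptional_cokernel}. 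Your route buys economy and symmetry: there is no ``at least one component works'' step, and Lemma~\ref{lem:przedstawienie_wiazki_rozszerzen_na_4_sposoby_t_wag} is not needed at all. The paper's route buys the fact that it never leaves the homological framework already built: exceptionality plus the dichotomy for indecomposable sheaves replaces your fibrewise-injectivity argument, which is the one genuinely new ingredient you must supply and which you correctly flag as the crux. Both proofs read off the ``moreover'' clause from Theorem~\ref{thm:projective_covers_t-weight}, and your explicit verification of conditions C1--C4 for $(\star)$, which the paper only asserts in the preamble preceding the proposition, checks out.
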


\begin{proof}
	From Lemma \ref{lem:exceptional_cokernel} we conclude
	$$\HomX E{L(b_{i_0}\vx_{i_0})}=0=\ExtX E{L(b_{i_0}\vx_{i_0})}\quad\textnormal{for each}\quad {i_0}\in I.$$
	Applying the functor $\HomX {L(b_{i_0}\vx_{i_0})}-$ to $(\star)$ we obtain a long exact sequence
	\begin{equation}\nonumber
	\begin{split}
	0\lra& \nadwzorem{\HomX { L(b_{i_0}\vx_{i_0})} L}{=\ 0}\lra\HomX { L(b_{i_0}\vx_{i_0})}{\bigoplus_{i=1}^3 L(b_i\vx_i)}\lra \\
	\lra & \HomX { L(b_{i_0}\vx_{i_0})}E\lra  \podwzorem{\ExtX { L(b_{i_0}\vx_{i_0})} L}{=\ 0}\lra\cdots.
	\end{split}
	\end{equation}

	Therefore $\HomX {L(b_{i_0}\vx_{i_0})}E\iso\HomX {L(b_{i_0}\vx_{i_0})}{\bigoplus_{i=1}^3L(b_i\vx_i)}\iso k.$
	We denote $\pi=\big[\pi_i\big]_{i\in I}$.
	Then each sequence
	$$\eta_i:\quad 0\lra L(b_i\vx_i)\uplra{\pi_i} E\lra C_i\lra 0\quad \textnormal{for}\quad i\in I$$
	satisfies the conditions \ref{lem:con:1}-\ref{lem:con:3} from Lemma \ref{lem:exceptional_cokernel}.
 Because $\EndX E\iso k\iso\HomX {L(b_i\vx_i)}E$, the map
 $-\circ\pi_i:\EndX E\lra \HomX {L(b_i\vx_i)}E$ is an isomorphism
 or is zero. Further, since $\pi\neq 0$, at least one map  $-\circ\pi_i$ is an isomorphism.
	
	Assume that $-\circ\pi_{j_0}$ is an isomorphism. Then from Lemma \ref{lem:exceptional_cokernel},
the  term $C_{j_0}$ of $\eta_{j_0}$ is exceptional, therefore it is a line bundle.
 Moreover  $\det C_{j_0}=\det E-\det L(b_{j_0}\vx_{j_0})=\det L+\sum_{i\in I, i\neq {j_0}} b_i\vx_i$,
  and it follows that $C_{j_0}\iso L\Big(\sum_{i\in I, i\neq {j_0}} b_i\vx_i\Big)$.
	The exact sequence $\eta_{j_0}$ can be written in the following form
	$$0\lra L(b_{j_0}\vx_{j_0}-\vw)(\vw)\lra E\lra L(b_{j_0}\vx_{j_0}-\vw)(\vx)\lra 0,$$
	where
	\begin{equation}\nonumber
	\begin{split}
	\vx= &\vw-b_{j_0}\vx_{j_0}+\sum_{i\in I, i\neq {j_0}} b_i\vx_i \\
	=& (p_{j_0}-b_{j_0}-1)\vx_{j_0}+\sum_{i\in I, i\neq {j_0}} (b_i-1)\vx_i +\sum_{j\notin I}(p_j-1)\vx_j.
	\end{split}
	\end{equation}
	Then the element $\vx$ satisfies the inequality $0\leq \vx\leq\vdom$,
and consequently $E$ is an extension bundle with the data $\Big(L(b_{j_0}x_{j_0}-\vw),\vx\Big)$.
	
	Moreover, from Theorem \ref{thm:projective_covers_t-weight}, the line bundles  $L(b_i\vx_i)$
for $i\in I$, $i\neq j$ are direct summands of the projective cover of the vector bundle $E$. Thus, from Lemma \ref{lem:przedstawienie_wiazki_rozszerzen_na_4_sposoby_t_wag}
 the bundle $E$ is an extension bundle with  the data
  $\Big(L(b_{i_0}x_{i_0}-\vw),\vw+\sum_{i\in I}b_i\vx_i-2b_{i_0}\vx_{i_0}\Big)$ for each ${i_0}\in I$.
\end{proof}


\begin{proposition}\label{lem:wiazka_rozszerzen_jako_kojadro}
	Let $E=\extb L\vx$ be an extension bundle with $\vx=\sum_{i\in I}l_i\vx_i+\sum_{j\notin I}(p_j-1)\vx_j$ and $\# I =3$.
	Then $E$ is the cokernel in the following exact sequence
	$$0\lra L(\vx-\vc)\uplra{\Big[x_i^{p_i-l_i-1}\Big]_{i\in I}} \bigoplus_{i\in I} L\Big(\vx-(1+l_i)\vx_i\Big)\lra E\lra 0.$$
	Moreover, the line bundles
 $L\Big(\vx-(1+l_i)\vx_i\Big)$ are direct summands of $\pc{E}$ and
 $L(\vx-\vc)$ is direct summand of $\ih{E}(-\vc)$.
\end{proposition}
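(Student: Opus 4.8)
The plan is to exhibit the displayed sequence as an instance of Lemma~\ref{lem:exceptional_cokernel} and then to identify its cokernel with $E$. Set $F=L(\vx-\vc)$ and $G=\bigoplus_{i\in I}L(\vx-(1+l_i)\vx_i)$, and let $f=\big[x_i^{p_i-l_i-1}\big]_{i\in I}\colon F\lra G$. First I would check that $f$ is well defined and a monomorphism: for $i\in I$ one has $0\leq l_i\leq p_i-2$, so $1\leq p_i-l_i-1\leq p_i-1$, and the $i$-th component has degree $(p_i-l_i-1)\vx_i=\vc-(1+l_i)\vx_i$, matching the summand $L(\vx-(1+l_i)\vx_i)$; since $F$ is a line bundle and at least one component is a non-zero map of line bundles, $\ker f=0$. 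This gives a short exact sequence $0\lra F\uplra{f}G\lra E'\lra 0$ with $E'=\mathrm{coker}\,f$.

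Next I would verify conditions \ref{lem:con:1}--\ref{lem:con:4} of Lemma~\ref{lem:exceptional_cokernel} for $f$. Condition \ref{lem:con:1} is clear since $F$ is a line bundle. Via $\HomX{\Oo(\vy)}{\Oo(\vz)}\iso S_{\vz-\vy}$ and Serre duality, conditions \ref{lem:con:2} and \ref{lem:con:3} reduce to showing that certain graded pieces $S_{\vz}$ vanish; in each case $\vz$ has a strictly negative $\vc$-coefficient in normal form precisely because $0\leq l_i\leq p_i-2$ for $i\in I$, so these are routine computations in $\LL$. The same computations show the three summands of $G$ are $\mathrm{Hom}_\XX$-orthogonal, whence $\EndX G\iso k^3$ is diagonal and $\HomX FG\iso k^3$ with $i$-th component spanned by $x_i^{p_i-l_i-1}$; therefore $-\circ f$ is a diagonal isomorphism, giving \ref{lem:con:4}. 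Lemma~\ref{lem:exceptional_cokernel} then yields that $E'$ is exceptional, hence an indecomposable rank-two bundle, and so by Theorem~\ref{prop:extension_bundle} an extension bundle.

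It remains to prove $E'\iso E$. A short normal-form computation gives $\det E'=\det G-\det F=2\det L+\vw+\vx=\det E$, the key identity being $\vx+\vc-\vw=\sum_{i\in I}(1+l_i)\vx_i$ (which uses $p_j\vx_j=\vc$ and $\#\{j\notin I\}=t-3$). Writing $\pi=[\pi_i]_{i\in I}\colon G\lra E'$ for the cokernel map, surjectivity forces $\pi_{i_0}\neq 0$ for some $i_0\in I$; applying $\HomX{L(\vx-(1+l_{i_0})\vx_{i_0})}{-}$ to $0\lra F\uplra{f}G\lra E'\lra 0$ and using \ref{lem:con:2} shows $\HomX{L(\vx-(1+l_{i_0})\vx_{i_0})}{E'}\iso k$, so $-\circ\pi_{i_0}$ is an isomorphism and Lemma~\ref{lem:exceptional_cokernel} applies to $0\lra L(\vx-(1+l_{i_0})\vx_{i_0})\uplra{\pi_{i_0}}E'\lra C_{i_0}\lra 0$. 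Thus $C_{i_0}$ is an exceptional rank-one bundle, i.e.\ a line bundle, and the determinant computation forces $C_{i_0}=L(\vw+(1+l_{i_0})\vx_{i_0})$. Comparing with Lemma~\ref{lem:przedstawienie_wiazki_rozszerzen_na_4_sposoby_t_wag} shows $E'$ is the extension bundle with the same data $\big(L(\vx-(1+l_{i_0})\vx_{i_0})(-\vw),\,2\vw+2(1+l_{i_0})\vx_{i_0}-\vx\big)$ as $E$; since such a bundle is unique up to isomorphism (the relevant $\mathrm{Ext}^1$ being one-dimensional by Theorem~\ref{prop:extension_bundle}), we get $E'\iso E$. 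The two ``moreover'' assertions then follow from Theorem~\ref{thm:projective_covers_t-weight}: the $L(\vx-(1+l_i)\vx_i)$ are exactly the non-$L(\vw)$ summands of $\pc E$, and $L(\vx-\vc)$ is the shift by $-\vc$ of the $L(\vx)$-summand of $\ih E$.

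The step I expect to be the main obstacle is this last identification, not the routine $\LL$-arithmetic. The situation is deliberately \emph{not} a literal instance of Proposition~\ref{lem:exceptional_cokernel_is_extension_bundle}: when some $l_i=0$ the exponent $p_i-l_i-1$ equals $p_i-1$, so that index would be excluded from the ``small'' index set there, whereas here all three summands $L(\vx-(1+l_i)\vx_i)$ of $\pc E$ must be kept. This forces me to argue through the general Lemma~\ref{lem:exceptional_cokernel} together with the determinant bookkeeping and the uniqueness of extension bundles, rather than quoting Proposition~\ref{lem:exceptional_cokernel_is_extension_bundle} directly.
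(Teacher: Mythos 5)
Your proposal is correct, but it is worth being precise about how it relates to the paper's argument. The paper proves this proposition in two lines: it cites Proposition \ref{lem:exceptional_cokernel_is_extension_bundle} (applied to the shifted line bundle $L(\vx-\vc)$ with exponents $b_i=p_i-l_i-1$, noting $\vx-(1+l_i)\vx_i=\vx-\vc+(p_i-l_i-1)\vx_i$) to identify the cokernel with $\extb L\vx$, and then invokes Theorem \ref{thm:projective_covers_t-weight} for the ``moreover'' clause. What you do differently is to refuse that citation and instead re-run the proof of Proposition \ref{lem:exceptional_cokernel_is_extension_bundle} in the shifted setting: verifying C1.--C4., applying Lemma \ref{lem:exceptional_cokernel} twice, doing the determinant bookkeeping, and concluding via Lemma \ref{lem:przedstawienie_wiazki_rozszerzen_na_4_sposoby_t_wag}, Theorem \ref{prop:extension_bundle} and the uniqueness of the middle term of a non-split extension with one-dimensional $\mathrm{Ext}^1$. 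Your stated reason for the detour is a genuine and correct observation about the paper: as written, Proposition \ref{lem:exceptional_cokernel_is_extension_bundle} assumes $I=\{i\mid b_i<p_i-1\}$ with $\#I=3$, whereas here $b_i=p_i-l_i-1$ equals $p_i-1$ exactly when $l_i=0$, so the paper's citation is not literally licensed in that edge case; your longer argument covers it, at the cost of essentially reproducing the cited proof. Note that a cheaper repair is available: conditions C1.--C4. hold for all exponents $1\leq b_i\leq p_i-1$ (this is the paper's cases \textbf{b.} and \textbf{c.} following Lemma \ref{lem:exceptional_cokernel}), and the proof of Proposition \ref{lem:exceptional_cokernel_is_extension_bundle} never uses the strict inequality $b_i<p_i-1$ --- in the normal form of $\vw-b_{j_0}\vx_{j_0}+\sum_{i\in I,\,i\neq j_0}b_i\vx_i$ the coefficients at the indices of $I$ are $p_{j_0}-b_{j_0}-1$ and $b_i-1$, which lie in $[0,p_i-2]$ as soon as $1\leq b_i\leq p_i-1$, so exactly the $t-3$ coefficients outside $I$ equal $p_j-1$. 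Hence that proposition holds verbatim under the weaker hypothesis, and your argument amounts to verifying this fact in the one case where it is needed.
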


\begin{proof}
	Consider the map $\Big[x_i^{p_i-l_i-1}\Big]_{i\in I}: L(\vx-\vc)\lra\bigoplus_{i\in I} L\Big(\vx-(1+l_i)\vx_i\Big)$. This map is a monomorphism and from the Proposition \ref{lem:exceptional_cokernel_is_extension_bundle} the cokernel of the map $\Big[x_i^{p_i-l_i-1}\Big]_{i\in I}$ is the extension bundle with data $(L,\vx)$. Hence $\mathrm{coker} \Big[x_i^{p_i-l_i-1}\Big]_{i\in I}\iso \extb{L}{\vx}$. The second claim follows from the form of the projective cover and the injective hull of the extension bundle $\extb{L}{\vx}.$
\end{proof}

\begin{lemma}\label{lem:wiazka_liniowa_jest_modulem}
	A line bundle $L$ is in $\modplus\La$ if and only if $\det L\geq 0$.
\end{lemma}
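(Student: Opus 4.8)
The plan is to reduce the statement to the vanishing of a single $\mathrm{Ext}^1$-group and then to a normal-form inequality in $\LL$. First I would write the line bundle as $L=\Oo(\vy)$, so that $\det L=\vy$, and recall that $L$ lies in $\modplus\La$ precisely when $L\in\cohplus\XX$, that is, when $\ExtX T{\Oo(\vy)}=0$ for the tilting bundle $T=\bigoplus_{0\le\vx\le\vc}\Oo(\vx)$. Thus the whole assertion becomes the equivalence $\ExtX T{\Oo(\vy)}=0\iff\vy\ge 0$.

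Next I would expand $\ExtX T{\Oo(\vy)}=\bigoplus_{0\le\vx\le\vc}\ExtX{\Oo(\vx)}{\Oo(\vy)}$ and apply Serre duality together with $\tauX\Oo(\vx)=\Oo(\vx+\vw)$ to obtain $\ExtX{\Oo(\vx)}{\Oo(\vy)}\iso D\,S_{\vx+\vw-\vy}$. Since $S_{\vz}\neq 0$ exactly when $\vz\ge 0$, the group $\ExtX T{\Oo(\vy)}$ vanishes if and only if $\vx+\vw-\vy\ngeq 0$ for every $\vx$ with $0\le\vx\le\vc$.

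Then I would make a monotonicity reduction to the top term $\vx=\vc$. If $\vx+\vw-\vy\ge 0$ for some $0\le\vx\le\vc$, then $\vc+\vw-\vy=(\vc-\vx)+(\vx+\vw-\vy)$ is a sum of two non-negative elements, hence $\ge 0$; conversely $\vx=\vc$ realises this extreme case. So the vanishing condition above is equivalent to the single inequality $\vc+\vw-\vy\ngeq 0$.

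It remains to compute $\vc+\vw-\vy$ in normal form. Using $\vw=(t-2)\vc-\sum_{i=1}^t\vx_i$ and $-\vx_i=(p_i-1)\vx_i-\vc$, I get $\vc+\vw=-\vc+\sum_{i=1}^t(p_i-1)\vx_i$. Writing $\vy=m\vc+\sum_{i=1}^t y_i\vx_i$ in normal form, with $0\le y_i<p_i$, gives $\vc+\vw-\vy=-(m+1)\vc+\sum_{i=1}^t(p_i-1-y_i)\vx_i$, and since $0\le p_i-1-y_i\le p_i-1$ this expression is itself already in normal form. As an element presented in normal form is $\ge 0$ exactly when its $\vc$-coefficient is $\ge 0$, we get $\vc+\vw-\vy\ge 0\iff m\le -1$, equivalently $\vc+\vw-\vy\ngeq 0\iff m\ge 0\iff\vy\ge 0$. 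Chaining the equivalences yields $\ExtX T{\Oo(\vy)}=0\iff\det L=\vy\ge 0$, as desired. I expect the only delicate point to be this last normal-form bookkeeping, together with invoking the standard fact that the sign of the $\vc$-coefficient detects membership in $\LL_+$; the Serre-duality computation and the reduction to $\vx=\vc$ are routine.
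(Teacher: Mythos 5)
Your proof is correct, and it takes a genuinely different (and more uniform) route than the paper's. The paper argues the two implications asymmetrically: for the ``only if'' direction it invokes the tilting property of $T$ --- since $\ExtX TL=0$ and $L\neq 0$ force $\HomX TL\neq 0$ --- to produce a nonzero morphism $\Oo(\vx)\lra L$ with $0\leq\vx\leq\vc$, and then concludes $\det L=(\det L-\vx)+\vx\geq 0$ as a sum of nonnegative elements; for the ``if'' direction it proceeds as you do, applying Serre duality and checking $\vx+\vw-\det L\ngeq 0$ separately for every $\vx$ with $0\leq \vx\leq\vc$. You instead run a single chain of equivalences: Serre duality turns $\ExtX T{\Oo(\vy)}=0$ into the condition $\vx+\vw-\vy\ngeq 0$ for all $\vx\in[0,\vc]$, your monotonicity observation (the decomposition $\vc+\vw-\vy=(\vc-\vx)+(\vx+\vw-\vy)$) collapses this to the single extremal case $\vx=\vc$, and one normal-form computation finishes both directions at once. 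What your route buys: it never uses the tilting property of $T$ (that $\mathrm{Hom}$ and $\mathrm{Ext}^1$ cannot simultaneously vanish on a nonzero sheaf), relying only on the standard facts $S_{\vz}\neq 0\iff\vz\geq 0$ and the criterion that an element in normal form is nonnegative exactly when its $\vc$-coefficient is, and the argument is symmetric in the two implications. What the paper's split buys: its forward direction is softer, with essentially no computation. It is worth noting that your reduction-to-$\vc$ decomposition is precisely the trick the paper employs elsewhere, namely in the proof of Lemma \ref{lem:projektywny_skladnik_jest_modulem}, so your proof in effect recycles that lemma's mechanism to make Lemma \ref{lem:wiazka_liniowa_jest_modulem} self-contained.
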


\begin{proof}
	Assume first  that $L$ is in $\modplus\La$, ie. $\ExtX TL=0$.
 Then $\HomX TL\neq 0$, so there is an element $\vx\in \LL$ such that $0\leq \vx \leq \vc$ and
 $\HomX {\Oo(\vx)}{L}\neq 0$. Therefore $\det L-\vx\geq 0$, so
	$\det L=\podwzorem{\det L-\vx}{\geq 0} + \podwzorem{\vx}{\geq 0} \geq 0.$
	
	Now, assume that $\det L=n\vc+\sum_{i=1}^ta_i\vx_i\geq 0$.
 Let  $\vx \in \LL$ satisfy that $0\leq \vx\leq \vc$. Then
	$\vx+\vw-\det L = \vx+\sum_{i=1}^t(p_i-a_i-1)\vx_i-(n+2)\vc\ngeq 0$.
	Therefore 	
	$\ExtX {\Oo(\vx)}L\cong D\HomX L{\Oo(\vx+\vw)}=0,$
	and consequently the line bundle $L$ is in $\modplus\La$.
\end{proof}

\begin{proposition}
	\label{cor:4_przedstawienia_E_jako_kojadro}
	Let $E$ be an extension bundle.
	\begin{enumerate}[label=\upshape(\roman*), leftmargin=*, widest=iii]
		\item For each direct summand $\widehat{L}$ of $\ih{E}$ there is a short exact sequence
		$$\eta_{\widehat{L}}:\quad 0\lra \widehat{L}(-\vc)\lra \bigoplus_{i=1}^3L_i\lra E\lra 0, $$
		where the $L_i$ are pairwise distinct direct summands of projective cover $\pc{E}$.\label{lem:it:4a}
		
		\item If $E$ is a $\La-$module from $\modplus\La$, then for at least one direct summand $\widehat{L}$ of $\ih{E}$
the line bundle $\widehat{L}(-\vc)$ is in $\modplus\La$ and $\eta_{\widehat{L}}$ is a sequence of $\La-$modules. \label{lem:it:4b}
	\end{enumerate}	
\end{proposition}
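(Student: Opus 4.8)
The plan is to realise the four sequences by feeding the four extension-bundle presentations of $E$ into Proposition~\ref{lem:wiazka_rozszerzen_jako_kojadro}, and then, for~\ref{lem:it:4b}, to single out a good presentation with the help of Lemma~\ref{lem:wiazka_liniowa_jest_modulem} and the vanishing $\ExtX TE=0$.

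\emph{Part \ref{lem:it:4a}.} Write $E=\extb L\vx$ with $\vx=\sum_{i\in I}l_i\vx_i+\sum_{j\notin I}(p_j-1)\vx_j$ and $\#I=3$. By Lemma~\ref{lem:przedstawienie_wiazki_rozszerzen_na_4_sposoby_t_wag} the bundle $E$ is an extension bundle both for the datum $(L,\vx)$ and, for each $i\in I$, for the datum $\big(L(\vx-(1+l_i)\vx_i)(-\vw),\,2\vw+2(1+l_i)\vx_i-\vx\big)$, the latter being admissible by part~\ref{lem:it:3} of that lemma. I would apply Proposition~\ref{lem:wiazka_rozszerzen_jako_kojadro} to all four data. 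Each application produces a short exact sequence $0\lra\widehat L(-\vc)\lra\bigoplus_{k=1}^3 L_k\lra E\lra0$ whose kernel is the $(-\vc)$-shift of the line bundle $L(\vx)$ (respectively $L'(\vx')$) of the datum and whose three middle terms are summands of $\pc E$. A short computation in $\LL$ gives $L(\vx-(1+l_i)\vx_i)(-\vw)(2\vw+2(1+l_i)\vx_i-\vx)=L((1+l_i)\vx_i+\vw)$, so as the datum runs through the four choices the kernel $\widehat L$ runs exactly through the four summands $L(\vx)$ and $L((1+l_i)\vx_i+\vw)$, $i\in I$, of $\ih E$ recorded in Theorem~\ref{thm:projective_covers_t-weight}. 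In each sequence the three middle terms are pairwise distinct, because the summands of $\pc E$ are $\mathrm{Hom}_\XX$-orthogonal and hence pairwise non-isomorphic. This is~\ref{lem:it:4a}.

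\emph{Part \ref{lem:it:4b}.} Since $E\in\modplus\La$, it suffices to find one summand $\widehat L$ of $\ih E$ with $\widehat L(-\vc)\in\modplus\La$: applying $\HomX T-$ to $\eta_{\widehat L}$ and using $\ExtX TE=0=\ExtX T{\widehat L(-\vc)}$ forces $\ExtX T{\bigoplus_k L_k}=0$, so the middle term lies in $\modplus\La$ as well and $\eta_{\widehat L}$ is a sequence of $\La$-modules. By Lemma~\ref{lem:wiazka_liniowa_jest_modulem} the requirement $\widehat L(-\vc)\in\modplus\La$ reads $\det\widehat L\geq\vc$. As $L(\vx)$ is a quotient bundle of $E$ and $\cohplus\XX$ is closed under bundle quotients, $\det L(\vx)=\det L+\vx\geq0$; if moreover $\det L+\vx\geq\vc$ we finish with $\widehat L=L(\vx)$.

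The remaining case, where $\det L+\vx$ has vanishing $\vc$-coefficient, is where the real work lies. Here I would extract a sharper consequence of $\ExtX TE=0$: applying $\HomX -{\Oo(\vc+\vw)}$ to $\eta_{L,\vx}$, Serre duality shows that both $\HomX{L(\vx)}{\Oo(\vc+\vw)}$ and $\ExtX{L(\vx)}{\Oo(\vc+\vw)}$ vanish precisely because $\det L+\vx\not\geq\vc$ (its $\vc$-coefficient being zero), so that $\HomX E{\Oo(\vc+\vw)}\iso\HomX{L(\vw)}{\Oo(\vc+\vw)}\iso S_{\vc-\det L}$. Since $\ExtX{\Oo(\vc)}E\iso D\HomX E{\Oo(\vc+\vw)}$ is a direct summand of $\ExtX TE=0$, this piece vanishes, i.e. $\det L\not\leq\vc$. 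A normal-form bookkeeping in $\LL$ then converts $\det L\not\leq\vc$ into the statement that at least two of the summands $L(\vx-(1+l_i)\vx_i)$ of $\pc E$ lie in $\modplus\La$; for the third index $i_0$ the summand $\widehat L=L((1+l_{i_0})\vx_{i_0}+\vw)$ of $\ih E$ then satisfies $\det\widehat L\geq\vc$, and this is the summand to keep. The main obstacle is exactly this last step — checking that all unwanted $\mathrm{Hom}/\mathrm{Ext}$ terms vanish so that $\HomX E{\Oo(\vc+\vw)}$ collapses to the single graded piece $S_{\vc-\det L}$, and the ensuing lattice computation in $\LL$ linking $\det L\not\leq\vc$ to a summand of $\ih E$ dominating $\vc$; the construction in~\ref{lem:it:4a} and the reduction opening~\ref{lem:it:4b} are formal.
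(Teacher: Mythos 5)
Your proof is correct, and while part (i) follows the paper's own route --- Lemma \ref{lem:przedstawienie_wiazki_rozszerzen_na_4_sposoby_t_wag} fed into Proposition \ref{lem:wiazka_rozszerzen_jako_kojadro}, with Theorem \ref{thm:projective_covers_t-weight} identifying the four kernels with the $(-\vc)$-shifted summands of $\ih{E}$ --- your part (ii) takes a genuinely different route at its key step. The paper begins by asserting that the defining sequence $0\lra L(\vw)\lra E\lra L(\vx)\lra 0$ is a sequence of $\La$-modules, so that Lemma \ref{lem:wiazka_liniowa_jest_modulem} gives \emph{both} $\det L(\vx)\geq 0$ and $\det L(\vw)\geq 0$ (this tacitly invokes the replacement argument of Theorem \ref{thm:three:weights} to arrange a presentation whose two ends are modules), and then settles the critical case $\det L(\vx)\ngeq \vc$ by a normal-form analysis of these inequalities. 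You instead keep the given presentation: you obtain $\det L(\vx)\geq 0$ from quotient-closedness of $\cohplus\XX$, and you replace the paper's input $\det L(\vw)\geq 0$ by the inequality $\det L\nleq\vc$, extracted from $\ExtX {\Oo(\vc)}E=0$ (a direct summand of $\ExtX TE=0$) via Serre duality; the two auxiliary vanishings you need for the collapse $\HomX E{\Oo(\vc+\vw)}\iso S_{\vc-\det L}$ do hold in the critical case, the Ext-vanishing being exactly its defining assumption and the Hom-vanishing following already from $\det L(\vx)\geq 0$. The trade-off: the paper's bookkeeping is slightly shorter but rests on the unrepeated replacement step, whereas your argument uses nothing beyond $E\in\modplus\La$, and your opening reduction (once $\widehat L(-\vc)$ and $E$ are modules, $\ExtX T{\bigoplus_i L_i}=0$ and exactness under $\HomX T-$ follow from the long exact sequence) makes explicit a point the paper leaves implicit. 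Finally, the ``normal-form bookkeeping'' you only sketch does close, and in exactly the form you state: writing $\det L=n\vc+\sum_{i=1}^t a_i\vx_i$ in normal form, $s=\#\{i\mid a_i>0\}$, and $m$, $m_I$ for the numbers of carries outside and inside $I$ occurring in $\det L(\vx)$, the critical case reads $n+m+m_I=0$ while $\det L\nleq\vc$ reads $n+s\geq 2$; since $n+s=\#\{i\in I\mid a_i>0 \text{ and } a_i+l_i<p_i\}$, this is precisely your claim that at least two of the summands $L(\vx-(1+l_i)\vx_i)$ of $\pc E$ are modules, and choosing $i_0$ with $a_{i_0}=0$ or with $a_{i_0}+l_{i_0}\geq p_{i_0}$ when such an index exists (otherwise all three $a_i$ are positive with no carries, $n+s=3$, and any $i_0\in I$ works), a direct computation gives $\det L+\vw+(1+l_{i_0})\vx_{i_0}-\vc\geq 0$, i.e. $\det\widehat L\geq \vc$ for $\widehat L=L(\vw+(1+l_{i_0})\vx_{i_0})$.
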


\begin{proof}
	The statement (i) is a consequence of Lemma \ref{lem:przedstawienie_wiazki_rozszerzen_na_4_sposoby_t_wag}
 and  Proposition \ref{lem:wiazka_rozszerzen_jako_kojadro}.

	 \ref{lem:it:4b}.
	Since $E$ is an extension bundle  there is an exact sequence
	$$0\lra L(\vw)\lra E\lra L(\vx)\lra 0,$$
	of $\La-$modules,
 where	$\vx=\sum_{i\in I}l_i\vx_i+\sum_{j\notin I}(p_j-1)\vx_j$ with $\# I =3$ and $0\leq l_i\leq p_i-2$.
  Recall form Lemma \ref{lem:wiazka_liniowa_jest_modulem} that $\det L(\vw)\geq 0$ and $\det L(\vx)\geq 0$.
		
		The direct summands of the injective hull $\ih{E}$ are as follows
	$$L(\vx),\quad L\Big(\vw+(1+l_i)\vx_i\Big) \quad \textnormal{for}\quad i\in I.$$

	If the line bundle $L(\vx-\vc)$ is in $\modplus\La$, we put $\widehat{L}=L(\vx)$ and the claim holds.
Assume now that $L(\vx-\vc)$ does  not belong to $\modplus\La$, then $\det L(\vx-\vc)\ngeq 0$.
	We write $\det L$ in normal form $\det L=n\vc+\sum_{i=1}^t a_i\vx_i$ and define two numbers $m$ and $m_I$ as follows
	$$m:=\#\big\{i\mid i\notin I \wedge a_i>0 \big\}\quad m_I:= \#\big\{i\mid i\in I \wedge a_i+l_i>p_i \big\}.$$
	Then we can write $\det L(\vx)$ in normal form
	 \begin{align*}
		\det L(\vx)&=n\vc +  \sum_{i=1}^t a_i\vx_i + \sum_{i\in I}l_i\vx_i+\sum_{i\notin I}(p_i-1)\vx_i\\
		&=n\vc +  \sum_{i\in I} (a_i+l_i)\vx_i  +\sum_{i\notin I}(p_i-1+l_i)\vx_i\\
		&=(n+m+m_I)\vc +  \sum_{i\in I} b_i\vx_i  +\sum_{i\notin I}c_i\vx_i \geq 0,
	\end{align*}
	where
	$$b_i=\left\{\begin{array}{ccc}
		a_i+l_i-p_i &\textnormal{if}&a_i+l_i\geq p_i\\
		a_i+l_i &\textnormal{if}&a_i+l_i<p_i
	\end{array}\right. \quad\textnormal{and} \quad
	c_i=\left\{\begin{array}{ccc}
	p_i-1 &\textnormal{if}&l_i=0\\
	l_i-1 &\textnormal{if}&l_i>0
	\end{array}\right. $$
Since
	\begin{align*}
		\det L(\vx-\vc)&=(n+m+m_I-1)\vc +  \sum_{i\in I} b_i\vx_i  +\sum_{i\notin I}c_i\vx_i \ngeq 0,
	\end{align*}
   we have $n+m+m_I=0$, hence $(\star)$ $n+m=-m_I$.
	Similarly we compute the determinant for the line bundle $L(\vw)$.	We have
	\begin{align*}
	\det L(\vw)&=n\vc +  \sum_{i=1}^t a_i\vx_i  +(t-2)\vc-\sum_{i=1}^t\vx_i = (n+t-2)\vc+ \sum_{i=1}^t (a_i-1)\vx_i,	
	\end{align*}
	where
   $\sum_{i=1}^t (a_i-1)\vx_i = -(t-3-m)\vc+\sum_{i\in I}(a_i-1)\vx_i + \sum_{i\notin I}b_i\vx_i$. We denote by $d_i$ the number $p_i-1$ if $a_i=0$ or $a_i-1$ if $a_i>0$. Then
   \begin{align*}
   \det L(\vw)&=(n+m+1)\vc +  \sum_{i\in I} (a_i-1)\vx_i  +\sum_{i\notin I}d_i\vx_i\\
   &=(1-m_I)\vc +  \sum_{i\in I} (a_i-1)\vx_i  +\sum_{i\notin I}d_i\vx_i \geq 0.
   \end{align*}
	Therefore $m_I$ is equal to $0$ or $1$. Moreover if $m_I=1$ then $a_i>0$ for all $i\in I$, and if $m_I=0$,
then at most one of the numbers $a_i$  for $i\in I$ is $0$.
	
	In the case $m_I=1$   there is an index $i_{0}\in I$ such that  $a_{i_0}+l_{i_0}\geq p_{i_0}$ Then
	\begin{align*}
		\det L\Big(\vw+(1+l_{i_0})\vx_{i_0}-\vc\Big)&= -\vc +  \sum_{i\in I} (a_i-1)\vx_i+ (1+l_{i_0})\vx_{i_0}  +\sum_{i\notin I}d_i\vx_i\\
		&=(a_{i_0}+l_{i_0}-p_{i_0})\vx_{i_0}+\sum_{i\in I, i\neq i_0 } \podwzorem{(a_i-1)}{\geq 0}\vx_i  +\sum_{i\notin I}d_i\vx_i  \geq 0.
	\end{align*}
	Therefore $L\Big(\vw+(1+l_{i_0})\vx_{i_0}-\vc\Big)$ is in $\modplus\La$ and we put $\widehat{L}=L\Big(\vw+(1+l_{i_0})\vx_{i_0}\Big).$

   In the case that $m_I=0$ if $a_i>0$ for all $i\in I$ then each line bundle
   $L\Big(\vw+(1+l_{i})\vx_{i}-\vc\Big)$ is in $\modplus\La$
    and each of those line bundles gives us the claim.
     If $a_{i_0}=0$ for some $i_0\in I$, then only
     $L\Big(\vw+(1+l_{i_0})\vx_{i_0}-\vc\Big)$ is in $\modplus\La$ and we put
     $\widehat{L}=L\Big(\vw+(1+l_{i_0})\vx_{i_0}\Big)$.
	
\end{proof}

As a conclusion of the previous proposition, we obtain an improvement of
Lemma \ref{lem:projektywny_skladnik_jest_modulem}.

\begin{corollary}
	If an extension bundle $E$ is a $\La-$module, then at least three direct summands of $\pc{E}$
 are also $\La-$modules. \kwadracik
\end{corollary}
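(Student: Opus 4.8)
The plan is to read the conclusion off directly from Proposition \ref{cor:4_przedstawienia_E_jako_kojadro}, which already packages all the needed information. First I would invoke part \ref{lem:it:4b} of that proposition: since $E$ is an extension bundle that is a $\La$-module, it lies in $\modplus\La$, so there exists a direct summand $\widehat{L}$ of the injective hull $\ih{E}$ for which the associated short exact sequence $\eta_{\widehat{L}}$ is a sequence of $\La$-modules.

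Next I would recall the explicit shape of $\eta_{\widehat{L}}$ furnished by part \ref{lem:it:4a}, namely
$$0\lra \widehat{L}(-\vc)\lra \bigoplus_{i=1}^3 L_i\lra E\lra 0,$$
where $L_1,L_2,L_3$ are pairwise distinct direct summands of the projective cover $\pc{E}$. Because the whole sequence $\eta_{\widehat{L}}$ consists of $\La$-modules, the middle term $\bigoplus_{i=1}^3 L_i$ belongs to $\modplus\La$.

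Finally I would pass from the middle term to its summands: a direct sum of line bundles lies in $\modplus\La$ exactly when each summand does, equivalently, by Lemma \ref{lem:wiazka_liniowa_jest_modulem}, when $\det L_i\geq 0$ for every $i$. Hence each of the three pairwise distinct line bundles $L_1,L_2,L_3$ is itself a $\La$-module, and these exhibit three direct summands of $\pc{E}$ that are $\La$-modules, which is precisely the assertion.

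I do not expect a genuine obstacle here, since the corollary is essentially a repackaging of Proposition \ref{cor:4_przedstawienia_E_jako_kojadro}; the only point deserving a word of care is the step from the middle term being a $\La$-module to each of its line bundle summands being one, and this is immediate from the definition of $\modplus\La$ as the class of modules whose indecomposable summands have positive rank, combined with the characterization of line bundle $\La$-modules in Lemma \ref{lem:wiazka_liniowa_jest_modulem}.
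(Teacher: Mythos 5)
Your proposal is correct and follows exactly the route the paper intends: the corollary is stated as an immediate consequence of Proposition \ref{cor:4_przedstawienia_E_jako_kojadro}, combining part \ref{lem:it:4b} (existence of a sequence $\eta_{\widehat{L}}$ of $\La$-modules) with part \ref{lem:it:4a} (its middle term is a sum of three pairwise distinct summands of $\pc{E}$) and the observation that a direct sum lies in $\modplus\La=\cohplus\XX$ precisely when each summand does. Your only slightly roundabout step, justifying that last observation via Lemma \ref{lem:wiazka_liniowa_jest_modulem}, could be replaced by noting that $\ExtX{T}{-}$ commutes with finite direct sums, but this is a matter of taste, not a gap.
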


Recall that we work with a map
of the form
$$f^{b_{i_1},b_{i_2},b_{i_3}}_{\vy}=\left[x_i^{b_i}\right]_{i\in I}:\Oo(\vy)\lra \bigoplus_{i\in I}\Oo(\vy+b_{i}\vx_i),$$
where $I=\{i_1,i_2,i_3\mid i_1<i_2<i_3\}$ is a subset of $\{1,2,\dots,t\}$, $0<b_i<p_i-1$ for $i\in I$ and $\vy\in \LL_+$. If we write
the element $\vy$ in  normal form
$$\vy=n\vc+\sum_{i=1}^ta_i\vx_i,\quad n\in\ZZ_{\geq 0}\quad\textnormal{and}\quad 0\leq a_i\leq p_i-1 \quad\text{for}\quad i=1,2,\dots,t,$$
then we distinguish the following $8$ cases:
\begin{enumerate}[label=\textbf{A}]
	\item $a_j+b_j<p_j$ for all $j\in I$, \label{case:alpha}
\end{enumerate}
\begin{enumerate}[label=\textbf{B}$_\arabic*$]
	\item $a_{i_1}+b_{i_1}\geq p_{i_1}$ and $a_j+b_j<p_j$ for $j\in I-\{i_1\}$, \label{case:beta:1}
	\item $a_{i_2}+b_{i_2}\geq p_{i_2}$ and $a_j+b_j<p_j$ for $j\in I-\{i_2\}$, \label{case:beta:2}
	\item $a_{i_3}+b_{i_3}\geq p_{i_3}$ and $a_j+b_j<p_j$ for $j\in I-\{i_3\}$, \label{case:beta:3}
\end{enumerate}
\begin{enumerate}[label=\textbf{C}$_\arabic*$]
	\item $a_{i_1}+b_{i_1} < p_{i_1}$ and $a_j+b_j\geq p_j$ for $j\in I-\{i_1\}$,
	\label{case:gamma:1}
	\item $a_{i_2}+b_{i_2} < p_{i_2}$ and $a_j+b_j\geq p_j$ for $j\in I-\{i_2\}$,
	\label{case:gamma:2}
	\item $a_{i_3}+b_{i_3} < p_{i_3}$ and $a_j+b_j\geq p_j$ for $j\in I-\{i_3\}$,
	\label{case:gamma:3}
\end{enumerate}
\begin{enumerate}[label=\textbf{D}]
	\item $a_i+b_i\geq p_i$ for all $i\in I$. \label{case:delta}
\end{enumerate}
   In the following lemma we proof that it is sufficient to study
   the cases \ref{case:alpha} or \ref{case:beta:3}.

\begin{lemma}\label{lem:each_module_is_type_A_or_B}
	Each extension module can be obtained as a cokernel of the map $f^{b_{i_1},b_{i_2},b_{i_3}}_{\vy}$
in the case \ref{case:alpha} or \ref{case:beta:3}.
\end{lemma}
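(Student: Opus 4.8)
The plan is to play the four cokernel presentations of an extension module, furnished by Proposition \ref{cor:4_przedstawienia_E_jako_kojadro}, against the arithmetic of the monomials defining them. Let $E=\extb{L}{\vx}$ be an extension module, with $\vx=\sum_{i\in I}l_i\vx_i+\sum_{j\notin I}(p_j-1)\vx_j$ and $I=\{i_1<i_2<i_3\}$. By Proposition \ref{cor:4_przedstawienia_E_jako_kojadro} each direct summand $\widehat L$ of $\ih{E}$ yields a presentation with base $\widehat L(-\vc)$ and targets three of the four line-bundle summands of $\pc{E}$. First I would record, using Proposition \ref{lem:wiazka_rozszerzen_jako_kojadro} together with the description of $\pc{E}$ and $\ih{E}$ in Theorem \ref{thm:projective_covers_t-weight}, that all four presentations have the same index set $I$, and that at each $i\in I$ the exponent is either $l_i+1$ or $p_i-1-l_i$ (the two values summing to $p_i$): the presentation from $\widehat L=L(\vx)$ carries the exponent $p_i-1-l_i$ at every $i$, while the one from $\widehat L=L((1+l_{i_0})\vx_{i_0}+\vw)$ carries $p_{i_0}-1-l_{i_0}$ at $i_0$ and $l_j+1$ at the remaining $j\in I$.

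Next I would recast ``case \ref{case:alpha} or \ref{case:beta:3}'' as a statement about carries. Writing $\det L(\vw)$ in normal form with $\vx_i$-coefficient $w_i$, set $\eta_i=1$ if $w_i+(l_i+1)\geq p_i$ and $\eta_i=0$ otherwise, and put $S=\sum_{i\in I}\eta_i$. A routine normal-form computation, of the type left to the reader in Lemma \ref{lem:przedstawienie_wiazki_rozszerzen_na_4_sposoby_t_wag}, then gives the carry positions: in the presentation from $L(\vx)$ there is a carry at $i$ precisely when $\eta_i=0$; in the presentation from $L((1+l_{i_0})\vx_{i_0}+\vw)$ there is a carry at $i_0$ precisely when $\eta_{i_0}=0$, and a carry at $j\neq i_0$ precisely when $\eta_j=1$. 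Since case \ref{case:alpha} means ``no carries'' and case \ref{case:beta:3} means ``one carry, located at $i_3$'', the problem is now purely combinatorial.

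The core of the argument is then a finite distinction on $S\in\{0,1,2,3\}$. For $S=3$ the presentation from $L(\vx)$ has no carry, so it is in case \ref{case:alpha}. For $S=1$ with $\eta_k=1$, the presentation from $L((1+l_k)\vx_k+\vw)$ has no carry, again case \ref{case:alpha}. For $S=0$ the presentation from $L((1+l_{i_3})\vx_{i_3}+\vw)$ has exactly one carry, at $i_3$, hence case \ref{case:beta:3}. For $S=2$, let $q$ be the unique index with $\eta_q=0$: if $q=i_3$ the presentation from $L(\vx)$ is in case \ref{case:beta:3}, and otherwise $\eta_{i_3}=1$ and the presentation from $L((1+l_r)\vx_r+\vw)$, where $r$ is the second index with $\eta_r=1$, again has its only carry at $i_3$. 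Thus in all cases some presentation lands in \ref{case:alpha} or \ref{case:beta:3}.

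It remains to check that the presentation singled out above is a presentation by $\La$-modules, i.e. that its base $\widehat L(-\vc)$ and its three targets all lie in $\modplus\La$. By Lemma \ref{lem:wiazka_liniowa_jest_modulem} this reduces to the nonnegativity of the leading $\vc$-coefficients, and here I would use that $E\in\modplus\La$ forces $\det L(\vw)\geq 0$ and $\det L(\vx)\geq 0$, together with the relation $N(\det L(\vx))=N(\det L(\vw))+S-1$ between their leading coefficients; feeding this into the four scenarios shows the chosen presentation is indeed effective. I expect the real obstacle to lie exactly in this final coordination: a priori there is no reason why the presentation forced into case \ref{case:alpha}/\ref{case:beta:3} by its carry pattern should simultaneously be the one whose base stays in $\modplus\La$, so the heart of the proof is verifying that these two requirements are compatible in each scenario---and, along the way, excluding the boundary situations in which an exponent would equal $p_i-1$ and thereby drop $\#I$ below $3$.
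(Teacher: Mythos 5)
Your proposal is correct in its essential structure but follows a genuinely different route from the paper's proof, modulo one repairable error discussed below. The paper argues by \emph{transforming} presentations: starting from an arbitrary presentation of the extension module as a cokernel of some $f^{b_{i_1},b_{i_2},b_{i_3}}_{\vy}$, it reads off the extension-bundle data of the cokernel via Proposition \ref{lem:exceptional_cokernel_is_extension_bundle}, and then exhibits, case by case in a table, a new map $f^{d_{i_1},d_{i_2},d_{i_3}}_{\vz}$ with the same data --- hence isomorphic cokernel --- whose carry pattern is of type \ref{case:alpha} or \ref{case:beta:3}; effectivity of the new base $\vz$ is automatic there, because the substitution absorbs the carries of the old base. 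You instead fix the four canonical presentations indexed by the line bundle summands of $\ih{E}$ (Proposition \ref{cor:4_przedstawienia_E_jako_kojadro}), encode their carry patterns by the indicators $\eta_i$ and $S=\sum_{i\in I}\eta_i$, and \emph{select} a suitable presentation by the case distinction $S\in\{0,1,2,3\}$. I checked your combinatorial core: the carry patterns you assert for the presentation from $L(\vx)$ and from $L\big((1+l_{i_0})\vx_{i_0}+\vw\big)$ are correct, the relation $N(\det L(\vx))=N(\det L(\vw))+S-1$ between leading $\vc$-coefficients holds, and in each of your cases the selected presentation is of type \ref{case:alpha} or \ref{case:beta:3} and has base with nonnegative leading coefficient, \emph{provided} $\det L(\vw)\geq 0$ and $\det L(\vx)\geq 0$. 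Your method buys a sharper and more intrinsic statement (it says exactly which of the four presentations works, strengthening part (ii) of Proposition \ref{cor:4_przedstawienia_E_jako_kojadro}), at the price of having to prove effectivity of that specific presentation --- the step the paper never needs, since in its approach effectivity is inherited rather than re-proved.

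The one genuine error is the claim that $E\in\modplus\La$ forces $\det L(\vw)\geq 0$. Applying $\HomX{T}{-}$ to the defining sequence $0\to L(\vw)\to E\to L(\vx)\to 0$ shows $\ExtX{T}{L(\vx)}=0$, so $\det L(\vx)\geq 0$ is indeed forced; but $\ExtX{T}{L(\vw)}$ is only a quotient of $\HomX{T}{L(\vx)}$ and need not vanish --- this failure is precisely what the proof of Theorem \ref{thm:three:weights} has to work around. The repair is the normalization used there (and assumed implicitly in the proof of Proposition \ref{cor:4_przedstawienia_E_jako_kojadro}): by Lemmas \ref{lem:projektywny_skladnik_jest_modulem_t_wag} and \ref{lem:przedstawienie_wiazki_rozszerzen_na_4_sposoby_t_wag}, replace $(L,\vx)$ by an equivalent pair for which both ends of the defining sequence lie in $\modplus\La$. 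Since the four presentations are intrinsic to $E$ (they depend only on $\pc{E}$ and $\ih{E}$), this renormalization merely relabels them, and your case analysis then goes through verbatim. Finally, the boundary situation you flag at the end (an exponent equal to $p_i-1$, e.g. when some $l_i=0$, which leaves the class of maps with $0<b_i<p_i-1$) is a real issue, but it afflicts the paper's own proof equally --- its table produces $d_i=p_i-b_i=p_i-1$ whenever $b_i=1$ --- so it is not a defect specific to your approach.
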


\begin{proof}
	We proof that the cokernels in the cases \ref{case:beta:1}, \ref{case:beta:2} and \ref{case:delta} are isomorphic to
 cokernels of the case \ref{case:beta:3}. Moreover the  cokernels in the cases
 \ref{case:gamma:1},  \ref{case:gamma:2} and  \ref{case:gamma:3}
  are isomorphic to cokernels of the case \ref{case:alpha}.
	
	Let $E$ be an extension module in the case \ref{case:beta:2}, thus $E$ is the cokernel of a map
	$$f^{b_{i_1},b_{i_2},b_{i_3}}_{\vy},\quad \textnormal{where}\quad a_{i_1}+b_{i_1}<p_{i_1},\quad a_{i_2}+b_{i_2}\geq p_{i_2}, \quad  a_{i_3}+b_{i_3}<p_{i_3}.$$
	 From  Lemma \ref{lem:exceptional_cokernel_is_extension_bundle}, applied to $i_1$, we infer
 that $E$ is an extension bundle $\extb L\vx$ with data $(L,\vx)$ such that
	\begin{equation}\nonumber
	\begin{split}
	\det L&=\vy+b_{i_3}\vx_{i_3}-\vw=n\vc+(a_{i_3}+b_{i_3})\vx_{i_3}+\sum_{j\neq i_3} a_j\vx_j-\vw,\\
	\vx&=\vw+b_{i_1}\vx_1+b_{i_2}\vx_{i_2}-b_{i_3}\vx_{i_3},
	\end{split}
	\end{equation}
	Consider the map
	$$f^{d_{i_1},d_{i_2},d_{i_3}}_{\vz}=\left[\begin{array}{c}
	x_{i_1}^{d_{i_1}}\\
	x_{i_2}^{d_{i_2}}\\
	x_{i_3}^{d_{i_3}}
	\end{array}\right]:\Oo(\vz)\lra \bigoplus_{i=1}^3\Oo(\vz+b_{i}\vx_i),$$
	where
	 \begin{align*}
		 	d_{i_1}&=b_{i_1},\quad d_{i_2}=p_{i_2}-b_{i_2},\quad d_{i_3}=p_{i_3}-b_{i_3}\\
		 \textnormal{and} &\\
		 \vz&=\vy+b_{i_2}\vx_{i_2}+b_{i_3}\vx_{i_3} -\vc =\\ &=n\vc+(a_{i_2}+b_{i_2}-p_{i_2}) \vx_{i_2} +(a_{i_3}+b_{i_3})\vx_{i_3}+\sum_{j\neq i_2,i_3}a_{j}\vx_{j}.
	\end{align*}
	Then from  Lemma \ref{lem:exceptional_cokernel_is_extension_bundle}, applied
 to $i_2$, we conclude that the cokernel of the map $f^{d_{i_1},d_{i_3},d_{i_3}}_{\vz}$ is an extension bundle with data $(\widehat{L}, \widehat\vx)$, such that
	\begin{equation}\nonumber
	\begin{split}
	\det \widehat{L}&=\vz+d_{i_2}\vx_{i_2}-\vw=n\vc+(a_{i_3}+b_{i_3})\vx_{i_3}+ \sum_{j\neq i_3} a_j\vx_j-\vw,,\\
	\widehat{\vx}&= \vw+d_{i_1}\vx_{i_1} -d_{i_2}\vx_{i_2}+d_{i_3}\vx_{i_3}=\vw+b_{i_1}\vx_{i_1}-(p_{i_2}-b_{i_2})\vx_{i_2}+(p_{i_3}-b_{i_3})\vx_{i_3}=\\ &=\vw +b_{i_1}\vx_{i_1}  +b_{i_2}\vx_{i_2}-b_{i_3}\vx_{i_3}.
	\end{split}
	\end{equation}
	Therefore, the cokernels of the maps $f^{b_{i_1},b_{i_3},b_{i_3}}_{\vy}$ and $f^{d_{i_1},d_{i_3},d_{i_3}}_{\vz}$ are isomorphic.
Furthermore we have that
	\begin{align*}
	&d_{i_1}+a_{i_1} =a_{i_1}+b_{i_1}< p_{i_1},& d_{i_2}+ (a_{i_2}+b_{i_2}-p_{i_2})=a_{i_2}< p_{i_2},\\  &d_{i_3}+(a_{i_3}+b_{i_3})=a_{i_3}+p_{i_3}\geq p_{i_3},	
	\end{align*}	
	hence $E$ is isomorphic to an extension module in the case of \ref{case:beta:3}.
	
	For the other cases, we use the same kind of  arguments.
We only put in a table the choice of $d_{i_1}$, $d_{i_2}$, $d_{i_3}$ and $\vz$.
For simplicity, in the case $a_i+b_i\geq p_i$, we denote $a_i+b_i-p_i$ by $c_i$.
	$$\begin{array}{c|c|c|c|c}
	\text{Case} & d_{i_1} & d_{i_2} & d_{i_3} & \vz \\
	\hline\hline
	\textnormal{\ref{case:beta:1}}& p_{i_1}-b_{i_1} & b_{i_2} &p_{i_3}-b_{i_3}&n\vc+ c_{i_1}\vx_{i_1}+(a_{i_3}+b_{i_3})\vx_{i_3}+\sum\limits_{j\neq i_1,i_3}a_j\vx_j \\
	\hline
	\textnormal{\ref{case:gamma:1}}&b_{i_1}&p_{i_2}-b_{i_2}&p_{i_3}-b_{i_3}&(n+1)\vc+c_{i_2}\vx_{i_2}+c_{i_3}\vx_{i_3}+\sum\limits_{j\neq i_2,i_3}a_j\vx_j\\
	\textnormal{\ref{case:gamma:2}}&p_{i_1}-b_{i_1}&b_{i_2}&p_{i_3}-b_{i_3}&(n+1)\vc+c_{i_1}\vx_{i_1}+c_{i_3}\vx_{i_3}+\sum\limits_{j\neq i_1,i_3}a_j\vx_j\\
	\textnormal{\ref{case:gamma:3}}&p_{i_1}-b_{i_1}&p_{i_2}-b_{i_2}&b_{i_3}&(n+1)\vc+c_{i_1}\vx_{i_1}+c_{i_2}\vx_{i_2}+\sum\limits_{j\neq i_1,i_2}a_j\vx_j\\
	\hline
	\textnormal{\ref{case:delta}}&p_{i_1}-b_{i_1}&b_{i_2}&p_{i_3}-b_{i_3}&(n+1)\vc+c_{i_1}\vx_{i_1}+c_{i_3}\vx_{i_3}+\sum\limits_{j\neq i_1,i_3}a_j\vx_j\\
	\end{array} $$
\end{proof}

\subsection{The cokernel construction}

We consider an exact sequence of $\La-$modules
$$0\lra L\uplra{f} M\uplra{g} N\lra 0,$$
and we assume that representations
$$L=\rep{L}{Q}{\vx}{\al},\quad  M=\rep{M}{Q}{\vx}{\al}$$
by vector spaces and matrices
and also the morphism $f=(f_{\vx})_{\vx\in Q_0}$ are known.
Here $Q_0$ and $Q_1$ denote the set of vertices and arrows of the quiver of the canonical algebra,
respectively.

We will construct a representation  $\rep{N}{Q}{\vx}{\al}$ for $N$.
 The vector space $N_{\vx}$ is the cokernel of the linear map $f_{\vx}$,
 and $g_{\vx}$ the reduction modulo $\im f_{\vx}$.
  Let $v_1$,\dots, $v_m$ be a basis of $M_{\vx}$ and let $\dim_k \im f_{\vx}=l$.
  We have that
$M_{\vx}=\im f_{\vx}\oplus kv_1\oplus\cdots\oplus kv_{m-l}$
and that the set
$v_1+\im f_{\vx}$,\dots, $v_{m-l}+\im f_{\vx}$ is a basis of the linear space $N_{\vx}=M_{\vx}/\im f_{\vx}$.
Moreover, for $j\in\{1,\dots,l\}$, we have
$v_{m-l+j}=f_{\vx}(w_j)+a_{1,j}v_1+\cdots+a_{m-l,j}v_{m-l}$
for some $a_{i,j}\in k$ i $w_j\in L_{\vx}$.
Then
\begin{align*}
v_i\upmapsto{g_{\vx}} & \modf{v_i}{\vx}, &&\text{for}\quad i=1,\dots,m-l\\
v_{m-l+j}\upmapsto{g_{\vx}} & \sum_{i=1}^{m-n} a_{i,j}\modf{v_i}{\vx},&&\text{for}\quad i=1,\dots,m-l.
\end{align*}
Therefore
$g_{\vx}=\begin{array}{|ccc|ccc|}
\hline
&&&&&\\
&I_{m-l}& &&A&\\
&&&&&\\
\hline
\end{array} \;,$
where $A=[a_{i,j}]\in M_{m-l, l}(k)$.

Next we will determine the maps $N_{\al}$ for $\al\in Q_1$.
Let $\al:\vx\lra \vy$ be an arrow of the quiver of the algebra $\La$. Then the following diagram
\begin{equation}\label{eq:diragram_przemienny_dla_alpha}
\xymatrix @+5pt{
	0 \ar[r]& L_{\vy}\ar[r]^{f_{\vy}} \ar[d]_{L_{\al}}& \ar[d]_{M_{\al}} M_{\vy}\ar[r]^{g_{\vy}}& N_{\vy}\ar[r] & 0\\
	0\ar[r]& L_{\vx}\ar[r]^{f_{\vx}}& M_{\vx}\ar[r]^{g_{\vx}}& N_{\vx}\ar[r] & 0\\
}
\end{equation}
can be uniquely completed to a commutative diagram by the map
 $N_{\al}: N_{\vy} \rightarrow  N_{\vx},  \\   v+\im f_{\vy}\longmapsto M_{\al}(v)+\im f_{\vx}$.

It is easily checked that the maps $N_{\alpha} $ satisfy the canonical relations.

\subsection{Construction of modules of type \ref{case:alpha}} \label{sec:modules_type_A}
Let $I\subset \{1,2,\dots, t\}$ with $\#I=3$, and let $\underline{b}=(b_i)_{i\in I}$ with $1\leq b_i\leq p_i-1$.
For $I$ and $\underline{b}$ we consider an exact sequence of vector bundles
	$$0\lra L\uplra{\big[x_i^{b_i}\big]_{i\in I}}
	\bigoplus_{i\in I} L(b_{i}\vx_{i})\uplra{g} \mathrm{coker}\big[x_i^{b_i}\big]_{i\in I}=:E \lra 0,$$
	where $L$ is a line bundl, with $\det L=n\vc+\sum\limits_{i\in I}a_i\vx_i\geq 0$ and $a_i+b_i<p_i$ for $i\in I$.
	We denote by $I_n$  the identity matrix of size $n$ and by $X_{n+m\times n}$, $Y_{n+m\times n}$, $Z_{n}(\la)$ the
following matrices
\begin{center}
	$X_{n+m\times n}:={\small\left[\begin{array}{c}
		I_n\\ \hline 0
		\end{array}\right]}\in M_{n+m,n}(k)$,
	$Y_{n+m\times n}:={\small\left[\begin{array}{c}
		0\\ \hline I_n
		\end{array}\right]}\in M_{n+m,n}(k)$, \\
	$Z_{n}(\la):={\small\left[\begin{array}{ccccc}
			1&0&\cdots& 0 &0\\
			\la& 1& &0 &0\\
			&&\ddots& \ddots&\\
			0&0&\cdots &\lambda &1\\
		\end{array}\right]}\in M_{n}(k)$.
\end{center}

	The $\La$-module attached to the line bundle $L$ has the following shape:
 $$\xymatrix @C -5pt{
 	L:&k^{n+1}\ar[ldd]_{\id}&\ar[l]_{\id}\cdots& \ar[l]_{\id}k^{n+1}&&& \ar[lll]_{L_{\al^{(1)}_{a_1+1}}=X_{n+1\times n}} k^n& \cdots \ar[l]_{\id} &\ar[l]_{\id} k^n& \\
 	&k^{n+1}\ar[ld]_{\id}&\ar[l]_{\id}\cdots& \ar[l]_{\id}k^{n+1}&&& \ar[lll]_{L_{\al^{(2)}_{a_2+1}}=Y_{n+1\times n}} k^n& \cdots \ar[l]_{\id} &\ar[l]_{\id} k^n& \\
 	k^{n+1}&\ar[l]\cdots&& &\cdots&&&  &\cdots& \ar[lu]_{\id}\ar[l]_{\id}\ar[ld]_{\id}  \ar[luu]_{\id}\ar[ldd]_{\id}k^n\\
 	&k^{n+1}\ar[ul]_{Z_{n+1}(\la_i)}&\ar[l]_{\id}\cdots& \ar[l]_{\id}k^{n+1}&&& \ar[lll]_{L_{\al^{(i)}_{a_i+1}}=Y_{n+1\times n}} k^n& \cdots \ar[l]_{\id} &\ar[l]_{\id} k^n& \\
 	&k^{n+1}\ar[luu]^{Z_{n+1}(\la_t)}&\ar[l]_{\id}\cdots& \ar[l]_{\id}k^{n+1}&&& \ar[lll]_{L_{\al^{(t)}_{a_3+1}}=X_{n+1\times n}} k^n& \cdots \ar[l]_{\id} &\ar[l]_{\id} k^n&
 }$$
 \noindent
where $\id$ is the identity map  (see \cite[Proposition 3.4]{Meltzer:2007}).
If $a_i=0$ for some $i\geq 3$, then $L_{\al^{(i)}_1}=L_{\al^{(i)}_{a_i+1}}=Z_{n+1}(\lambda_i)\cdot X_{n+1\times n}$.

  The modules $L(b_i\vx_i)$ for $i\in I$ have a  similar shape, with the difference that in the $i-$th arm, the jump of dimension is
   realized for the arrow $\al^{(i)}_{a_i+b_i+1}.$

 First we compute matrices of maps $x_i^{b_i}:L\lra L(b_i\vx_i)$ for each $i\in I$.
The map $x_i^{b_i}$ has the following matrices:

$\mu_i\cdot\left(\begin{array}{cllllll}
&\cdots & & \upramka{a_1\vx_1}{I_{n+1}}&\upramka{(a_1+1)\vx_1}{I_n} & \cdots&\\ \\
&\cdots \upramka{a_i\vx_i}{I_{n+1}} &\upramka{(a_i+1)\vx_i}{X_{n+1\times n}}& \cdots & \upramka{(a_i+b_i)\vx_i}{X_{n+1\times n}} & \upramka{(a_i+b_i+1)\vx_i}{I_{n}}\cdots &\\ \\
\ramka{I_{n+1}}& & &\cdots & & & \ramka{I_n}\\ \\
&\cdots & & \upramka{a_j\vx_j}{I_{n+1}}&\upramka{(a_j+1)\vx_j}{I_n} & \cdots&\\
& & & \cdots& &&
\end{array}\right),$
for some $\mu_i\in k$, where $i\neq 2$ and $j\neq i$.
Here the captions above the frames mean the vertices of the quiver and the matrices in the frames the matrives for them.
 Moreover, in the case  $i=2$ we need to switch from $X_{n+1\times n}$ to $Y_{n+1\times n}$.
Therefore the map $\big[x_i^{b_i}\big]_{i\in I}$ depends on the three scalars $\mu_{i_1}$, $\mu_{i_2}$ and $\mu_{i_3}$. From  Lemma \ref{lem:exceptional_cokernel} $(iii)$, we can put $\mu_{i_1}=1=\mu_{i_2}$ and $\mu_{i_3}=-1$.

The second step is the computation of the map $g:\bigoplus_{i\in I}L(b_i\vx_i)\lra E$. For this purpose we will use the following lemma from linear algebra, where for simplicity, we will use notation $B_{b\times a}$ for a matrix $B\in M_{b\times a}(k)$.
\begin{lemma}
	Let $0\lra V\uplra{f} W\uplra{\overline{\phantom{g}}} \mathrm{coker}(f)\lra 0$ be an exact
 sequence of linear maps, where $\dim V=a$, $\dim W=a+b+c$ and $\overline{\phantom{g}}$ is the
 reduction modulo $\im(f)$.
	\begin{enumerate}
		\item If $f$ has a block matrix form $\begin{array}{|c|}
		\hline B_{b\times a}\\ \hline C_{c\times a}\\ \hline -I_a \\ \hline
		\end{array}$\, ,  then the reduction map $\overline{\phantom{g}}$ has a block matrix form
		$$\begin{array}{|c|c|c|}
		\hline I_b & 0 & B_{b\times a}\\ \hline 0 & I_c & C_{c\times a} \\ \hline
		\end{array} \, .$$
		
		\item If $f$ has a block matrix form $\begin{array}{|c|}
		\hline I_a\\ \hline B_{b\times a}\\ \hline -C_{c\times a} \\ \hline
		\end{array}$ \, , then the reduction map $\overline{\phantom{g}}$ has a block matrix form
		$$\begin{array}{|r|c|c|}
		\hline  -B_{b\times a} & I_b & 0\\ \hline C_{c\times a} & 0 & I_c \\ \hline
		\end{array} \, .$$
	\end{enumerate}
\end{lemma}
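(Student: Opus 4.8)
The plan is to prove both identities by a single direct computation in adapted bases, exploiting the one structural feature common to the two cases: the block $\pm I_a$ inside $f$ singles out one of the three blocks of $W$ as redundant in the cokernel. First I would record that, by exactness at $V$, the map $f$ is injective, so $\dim\im(f)=a$ and $\dim\mathrm{coker}(f)=(a+b+c)-a=b+c$. The whole argument then reduces to exhibiting a basis of $\mathrm{coker}(f)$ consisting of the images under $\overline{\phantom{g}}$ of the standard basis vectors of two of the three blocks of $W$, and reading off $\overline{\phantom{g}}$ in that basis.

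For part (1), I would write $W=k^b\oplus k^c\oplus k^a$ with standard basis $u_1,\dots,u_b$, $v_1,\dots,v_c$, $t_1,\dots,t_a$ for the three blocks, so that the $j$-th column of $f$ is $f(e_j)=\sum_i B_{ij}u_i+\sum_i C_{ij}v_i-t_j$. Since every column of $f$ vanishes in the cokernel, these give the relations $\overline{t_j}=\sum_i B_{ij}\overline{u_i}+\sum_i C_{ij}\overline{v_i}$ for $j=1,\dots,a$. Hence the $b+c$ classes $\overline{u_1},\dots,\overline{u_b},\overline{v_1},\dots,\overline{v_c}$ span $\mathrm{coker}(f)$, and since their number equals $\dim\mathrm{coker}(f)$ they form a basis. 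In this basis the first two blocks of $W$ map to the chosen basis vectors, contributing $I_b$ and $I_c$, while the $j$-th column of the third block maps, by the relation above, to $(B_{\cdot j},C_{\cdot j})$; this yields exactly $\begin{array}{|c|c|c|}\hline I_b & 0 & B_{b\times a}\\ \hline 0 & I_c & C_{c\times a}\\ \hline\end{array}$.

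For part (2) I would argue identically with $W=k^a\oplus k^b\oplus k^c$: now the $j$-th column of $f$ reads $e_j^{(1)}+\sum_i B_{ij}e_i^{(2)}-\sum_i C_{ij}e_i^{(3)}$, and its vanishing in the cokernel gives $\overline{e_j^{(1)}}=-\sum_i B_{ij}\overline{e_i^{(2)}}+\sum_i C_{ij}\overline{e_i^{(3)}}$. Thus the classes of the last two blocks form a basis, the reduction of the $j$-th vector of the first block has column $(-B_{\cdot j},C_{\cdot j})$, and the remaining two blocks contribute $I_b$ and $I_c$, giving the claimed matrix.

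There is no serious obstacle here; the statement is a routine fact about quotients by the image of an injective block map. The only point that requires care is the sign bookkeeping when solving each relation $f(e_j)=0$ for the basis vector of the block carrying $\pm I_a$: the coefficients of the other two blocks, as they appear in $f$, are multiplied by $-1$ times the sign of that identity block — a factor $+1$ in part (1) (where the identity block is $-I_a$) and $-1$ in part (2) (where it is $+I_a$) — which is precisely why $B$ and $C$ appear unchanged in (1) but $B$ flips sign in (2). Linear independence of the selected $b+c$ classes needs no separate verification: they span a space of dimension $b+c$, hence form a basis.
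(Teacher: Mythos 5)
Your proof is correct and follows essentially the same route as the paper's: fix standard bases for the three blocks of $W$, use the vanishing of the columns of $f$ in the cokernel to solve for the classes of the block carrying $\pm I_a$ in terms of the classes of the other two blocks, and read off the matrix of the reduction in the resulting basis. The only difference is cosmetic — you justify that the selected $b+c$ classes form a basis by the dimension count $\dim\mathrm{coker}(f)=b+c$ coming from injectivity of $f$, a point the paper passes over by simply choosing them as a basis.
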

	
\begin{proof}
	 $\mathrm{(i)}$
	Let  $v_1$, $\dots$, $v_a$ be a basis of $V$, and let
 $w^1_1$, \dots, $w^1_b$, $w^2_1$, \dots, $w^2_c$,$w^3_1$, \dots, $w^3_a$ be a basis of $W$.
	Furthermore, we choose $\overline{w^1_1}$, \dots, $\overline{w^1_b}$, $\overline{w^2_1}$, \dots, $\overline{w^2_c}$ as a basis
 of $\mathrm{coker}(f)$. Then the equalities
	\begin{align*}
	\overline{w^3_i}=\overline{w^3_i+f(w^3_i) }=\overline{Bw^3_i+Cw^3_i},\quad \textnormal{for} \quad i=1,2,\dots, a
	\end{align*}
	implies the claim.
	
 $\mathrm{(ii)}$ We choose $v_1$, $\dots$, $v_a$ as a basis of $V$,
  and $w^1_1$, \dots, $w^1_a$, $w^2_1$, \dots, $w^2_b$,$w^3_1$, \dots, $w^3_c$ as a
   basis of $W$. Further, we choose  $\overline{w^2_1}$, \dots, $\overline{w^2_b}$,
    $\overline{w^3_1}$, \dots, $\overline{w^3_c}$ as a basis of $\mathrm{coker}(f)$. Then
	\begin{align*}
	\overline{w^1_i}=\overline{w^1_i-f(w^1_i) }=\overline{-Bw^1_i+Cw^1_i},\quad \textnormal{for} \quad i=1,2,\dots, a
	\end{align*}
	so the claim holds.
\end{proof}

By using the  lemma above we obtain a matrix representation of the map
 $g=\big(g_{\vx}\big):\bigoplus_{i\in I}L(b_i\vx_i)\lra E$. Recall that $I=\{i_1, i_2,i_3\}$ is given in ascending order. Then
\begin{align*}
&g_{\vec{0}}=g_{\vx_j}=\cdots=g_{a_j\vx_j}= \blockmatrix{I_{n+1}}{0}{I_{n+1}}{0}{I_{n+1}}{I_{n+1} }&& \textnormal{for}\quad j={1,2,\dots,t}\\
&g_{(a_j+1)\vx_j}=\cdots=g_{\vc}= \blockmatrix{I_n}{0}{I_n}{0}{I_n}{I_n}&& \textnormal{for}\quad j\notin I\\
&g_{(a_i+b_i+1)\vx_i}=\cdots=g_{\vc}= \blockmatrix{I_n}{0}{I_n}{0}{I_n}{I_n}&& \textnormal{for}\quad i\in I\\
&g_{(a_{i_1}+1)\vx_{i_1}}=\cdots=g_{(a_{i_1}+b_{i_1})\vx_{i_1}}= \blockmatrix{I_{n+1}}{0}{X_{n+1\times n}}{0}{I_n}{I_n}&& \textnormal{for}\quad i_1\neq 2\\
&g_{(a_j+1)\vx_j}=\cdots=g_{(a_j+b_j)\vx_j}= \blockmatrix{I_{n}}{0}{I_{n+1}}{0}{I_n}{X_{n+1\times n}}&& \textnormal{for}\quad i_2\neq 2\\
&g_{(a_{i_3}+1)\vx_{i_3}}=\cdots=g_{(a_{i_3}+b_{i_3})\vx_{i_3}}= \blockmatrix{-I_{n}}{I_{n}}{0}{X_{n+1\times n}}{0}{I_{n+1}}.&&
\end{align*}
Note that in the case $i_1=2$ or $i_2=2$ we need to switch from $X_{n+1\times n}$, to $Y_{n+1\times n}$ in the above block matrices.
From this we obtain
\begin{proposition}
	The module of type $A$ has the following dimensional vector
	$$\left(\begin{array}{cllllll}
	& & & \vdots& & &\\ \\
	&\cdots  \upramka{a_i\vx_i}{2n+2} & \upramka{(a_i+1)\vx_i}{2n+1}& \cdots &  \upramka{(a_i+b_i)\vx_i}{2n+1} &  \upramka{(a_i+b_i+1)\vx_i}{2n}\cdots &\\ \\
	 \upramka{0}{2n+2}& & &\cdots & & &  \upramka{\vc}{2n}\\ \\
	&\cdots & &  \upramka{a_j\vx_j}{2n+2}& \upramka{(a_j+1)\vx_j}{2n} & \cdots&\\
	& & & \vdots& &&
	\end{array}\right),$$
where $i\in I$ and $j\notin I$. Here the captions above the frames denote the vertices of the quiver and the numbers in the frames
the dimensions of the vector spaces for them.

\end{proposition}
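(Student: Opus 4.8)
The plan is to read the dimension vector of $E$ directly off the pointwise exactness of the defining sequence. Since
$$0\lra L\uplra{\big[x_i^{b_i}\big]_{i\in I}}\bigoplus_{i\in I}L(b_i\vx_i)\lra E\lra 0$$
is a short exact sequence in $\mod\La$, its evaluation at any vertex $\vx\in Q_0$ is a short exact sequence of vector spaces, so that
$$\dim_k E_{\vx}=\sum_{i\in I}\dim_k L(b_i\vx_i)_{\vx}-\dim_k L_{\vx}.$$
Thus it suffices to know the dimension vectors of $L$ and of its shifts $L(b_i\vx_i)$, and these are supplied by the explicit shape of the rank one modules recalled above (see \cite[Proposition 3.4]{Meltzer:2007}).

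Concretely, I would record that $\dim_k L_{\vx}$ equals $n+1$ at the source $0$ and at every vertex lying, on each arm $j$, weakly before $a_j\vx_j$, and drops to $n$ from $(a_j+1)\vx_j$ onward and at the sink $\vc$. The shifted bundle $L(b_i\vx_i)$ has the same dimension vector except on the $i$-th arm, where by the hypothesis $a_i+b_i<p_i$ the drop is postponed to the genuine vertex $(a_i+b_i+1)\vx_i$; on every other arm it agrees with that of $L$.

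With these data the claim reduces to a short bookkeeping over the regimes cut out by the jumps. On an arm $j\notin I$ all three summands share the jump position of $L$, so the three dimensions add up to $3(n+1)$ before $(a_j+1)\vx_j$ and to $3n$ afterwards; subtracting $\dim_k L_{\vx}$ yields $2n+2$ and $2n$. On an arm $i\in I$ one distinguishes three ranges: weakly before $a_i\vx_i$ all summands contribute $n+1$, giving $3(n+1)-(n+1)=2n+2$; between $(a_i+1)\vx_i$ and $(a_i+b_i)\vx_i$ only the $i$-th summand still contributes $n+1$ while the other two contribute $n$, giving $(n+1)+2n-n=2n+1$; and from $(a_i+b_i+1)\vx_i$ onward every summand contributes $n$, giving $3n-n=2n$. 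The source and the sink give $3(n+1)-(n+1)=2n+2$ and $3n-n=2n$. Collecting these values reproduces exactly the asserted dimension vector.

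Equivalently, and most quickly, one may simply count the rows of the reduction maps $g_{\vx}$ already displayed above: the number of rows of $g_{\vx}$ is $\dim_k\mathrm{coker}(f_{\vx})=\dim_k E_{\vx}$, and their block shapes exhibit $2n+2$, $2n+1$ or $2n$ rows in precisely the claimed ranges. There is no real obstacle; the only point needing care is the correct placement of the dimension jumps, namely distinguishing the arm $i\in I$, whose summand $L(b_i\vx_i)$ postpones its drop to $(a_i+b_i+1)\vx_i$, from the arms $j\notin I$. The hypothesis $a_i+b_i<p_i$ for $i\in I$ is exactly what guarantees that this postponed jump still falls on a vertex of the $i$-th arm strictly before $\vc$, so that the middle regime of length $b_i$ is nonempty and the three values $2n+2,\,2n+1,\,2n$ genuinely occur.
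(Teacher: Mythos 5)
Your proposal is correct and matches the paper's (largely implicit) argument: the paper reads off $\dim_k E_{\vx}$ from the row counts of the explicitly computed reduction maps $g_{\vx}$, which is exactly the pointwise-exactness count $\dim_k E_{\vx}=\sum_{i\in I}\dim_k L(b_i\vx_i)_{\vx}-\dim_k L_{\vx}$ that you carry out, using the rank-one dimension vectors from \cite[Proposition 3.4]{Meltzer:2007}. Your bookkeeping of the jump positions (including the role of $a_i+b_i<p_i$) agrees with the asserted dimension vector, so there is nothing to add.
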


Finally we compute the matrices of the the module $E$, by completing the following square:
$$(\star)\quad \xymatrix @+10pt{
	G_{\vy}\ar[r]^{G_{\al}=\id}\ar[d]_{g_{\vy}} &G_{\vx}\ar[d]^{g_{\vx}}\\
	E_{\vy}\ar @{-->}[r]^{E_{\al}} &E_{\vx},
}$$
to a commutative diagram, for each arrow $\alpha:\vx\lra \vy$. Since the  maps $G_\alpha$ are monomorphism,
 each of these squares can be complete only in  one way.
If for some arrow $\al:\vx\lra \vy$, the maps $L(b_i\vx_i)_{\al}$ are identities
 for each $i\in I$ and $g_{\vx}=g_{\vy}$, then $E_{\al}$ is the identity map.
  Therefore we  need only to determine matrices for the following arrows:
\begin{align*}
&\al^{(j)}_1& \textnormal{for}&\quad j=3,4,\dots, t\\
&\al^{(j)}_{a_j+1} & \textnormal{for}&\quad j=1,2,\dots, t\\
&\al^{(i)}_{a_i+b_i+1}& \textnormal{for}&\quad i\in I
\end{align*}
In the case of the arrows of the $j-$th arm, with $j\neq i_3$ (and arrow $\al_{1}^{(i_3)}$)
we  deal with the following commutative diagram
$$\xymatrix @+18pt{
	G_{\vy}\ar[rrr]^{G_{\al}=\BlockMatrixDiagonalThree{D_{ b_1\times b_2}}{D_{c_1\times c_2}}{D_{a_1\times a_2}} } \ar[d]_{g_{\vy}=\BlockMatrixDwoThree{I_{b_2}}0{A_{b_2\times a_2}}0{I_{c_2}}{A_{c_2 \times a_2}} } &&&G_{\vx}\ar[d]^{g_{\vx}=\BlockMatrixDwoThree{I_{b_1}}0{B_{b_1 \times a_1 }}0{I_{c_1}}{B_{c_1 \times a_1}} }\\
	E_{\vy}\ar @{-->}[rrr]_{E_{\al}=\BlockMatrixDwoDwo{E_{b_1\times b_2 }}{E_{ b_1\times c_2}}{E_{c_1\times b_2}}{E_{c_1\times c_2}}} &&&E_{\vx}
} .$$
Then from the commutativity of the diagram above we get
$E_{\al}=\BlockMatrixDwoDwo{D_{ b_1\times b_2}}{0}{0}{D_{c_1\times c_2}}$.

In the case of arrow $\al_{a_{i_3}+1}^{(i_3)}$ we  deal with a commutative diagram of the form
$$\xymatrix @+18pt{
	G_{\vy}\ar[rrr]^{G_{\al}=\BlockMatrixDiagonalThree{D_{ b_1\times b_2}}{D_{c_1\times c_2}}{D_{a_1\times a_2}} } \ar[d]_{g_{\vy}=\BlockMatrixDwoThree{-A_{c_2\times b_2}}{I_{c_2}}0{A_{a_2 \times b_2}}0{I_{a_2}} } &&&G_{\vx}\ar[d]^{g_{\vx}=\BlockMatrixDwoThree{I_{b_1}}0{B_{b_1 \times a_1 }}0{I_{c_1}}{B_{c_1 \times a_1}} }\\
	E_{\vy}\ar @{-->}[rrr]_{E_{\al}=\BlockMatrixDwoDwo{E_{b_1\times c_2 }}{E_{ b_1\times a_2}}{E_{c_1\times c_2}}{E_{c_1\times a_2}}} &&&E_{\vx}
}.$$
Then
$E_{\al}=\BlockMatrixDwoDwo{0}{B_{b_1\times a_1}\cdot D_{ a_1\times a_2}}{D_{c_1\times c_2}}{B_{c_1\times a_1}\cdot D_{ a_1\times a_2} }$.

In the case of the arrow $\al_{a_{i_3}+b_{i_3}+1}^{(i_3)}$ we  deal with a commutative diagram of the form
$$\xymatrix @+18pt{
	G_{\vy}\ar[rrr]^{G_{\al}=\BlockMatrixDiagonalThree{D_{ b_1\times b_2}}{D_{c_1\times c_2}}{D_{a_1\times a_2}} } \ar[d]_{g_{\vy}=\BlockMatrixDwoThree{I_{b_2}}0{A_{b_2\times a_2}}0{I_{c_2}}{A_{c_2 \times a_2}} } &&&G_{\vx}\ar[d]^{ g_{\vx}=\BlockMatrixDwoThree{-B_{c_1 \times b_1 }}{I_{c_1}}0{B_{a_1\times b_1}}0{I_{a_1}} }\\
	E_{\vy}\ar @{-->}[rrr]_{E_{\al}=\BlockMatrixDwoDwo{E_{c_1\times b_2 }}{E_{ c_1\times c_2}}{E_{a_1\times b_2}}{E_{a_1\times c_2}}} &&&E_{\vx}
}.$$
Then
$E_{\al}=\BlockMatrixDwoDwo{-B_{c_1\times b_1}\cdot D_{ b_1\times b_2}}{ D_{c_1\times c_2}}{B_{a_1\times b_1}\cdot D_{ b_1\times b_2}}{0}$.

\begin{theorem}\label{lem:representation_of_type_A}
The extension module of Type A can be established by the following vector spaces and matrices. \\
{\small
	$\xymatrix @C -7pt{
	&\ar[ldd]\cdots&& &\cdots&& & && \cdots& \\
	&\cdots\ar[ld]|{E_{\al^{(i)}_{1}}}&k^{2n+2}\ar[l]_{\id}&& \ar[ll]_{E_{\al^{(i)}_{a_i+1} }}k^{2n+1}& \ar[l]_{\id}\cdots&\ar[l]_{\id}k^{2n+1} && k^{2n} \ar[ll]_{E_{\al^{(i)}_{a_i+b_i+1} }} &\ar[l]_{\id}\cdots &\\
	k^{2n+2}&&\ar[ll]|{E_{\al^{(j)}_{1}}}\cdots&& \ar[ll]k^{2n+2}&& k^{2n}\ar[ll]_{E_{\al^{(j)}_{a_j+1}}}    &&  \cdots\ar[ll]&& \ar[ll]\ar[lu]_{\id} \ar[ld]_{\id}  \ar[luu]_{\id}k^{2n}\\
	&\ar[lu]^{E_{\al_{1}^{(t)}}}\cdots&& &\cdots&& & && \cdots&
}$  }
where
\begin{align*}
&E_{\al^{(1)}_{1}}=E_{\al^{(2)}_{1}}=\id,&& E_{\al^{(j)}_{1}}=\BlockMatrixDwoDwo{Z_{n+1}(\la_j)}{0}{0}{Z_{n+1}(\la_j)}\quad \textnormal{for}\quad j>2\\
&E_{\al^{(i_1)}_{a_{i_1}+1}}= E_{\al^{(i_2)}_{a_{i_2}+1}}=\BlockMatrixDwoDwo{I_{n+1}}{0}{0}{X_{n+1\times n}},&&
E_{\al^{(i_1)}_{a_{i_1}+b_{i_1}+1}}=E_{\al^{(i_2)}_{a_{i_2}+b_{i_2}+1}}=\BlockMatrixDwoDwo{X_{n+1\times n}}{0}{0}{I_{n}},\\
&E_{\al^{(i_3)}_{a_{i_3}+1}}= \BlockMatrixDwoDwo{0}{I_{n+1}}{X_{n+1\times n}}{I_{n+1}},&&
E_{\al^{(i_3)}_{a_{i_3}+b_{i_1}+1}}=\BlockMatrixDwoDwo{-I_n}{I_n}{X_{n+1\times n}}{0},\\
&E_{\al^{(j)}_{a_j+1} }=\BlockMatrixDwoDwo{X_{n+1\times n}}{0}{0}{X_{n+1\times n}} &&\textnormal{for}\quad j\notin \{i_1,i_2,i_3\}.
\end{align*}
In the case of the second  arm we need to switch from the matrices $X_{*\times*}$ to $Y_{*\times*}$.
 Moreover, if $a_i=0$, then the arrow $\alpha_1^{(i)}$ coincides with the arrow $\alpha_{a_i+1}^{(i)}$,
 and in this case
   in the place of $\alpha_1^{(i)}$ we put the composition of the above matrices $E_{\al^{(i)}_{1}}$ and $E_{\al^{(i)}_{a_{i}+1}}$.
\end{theorem}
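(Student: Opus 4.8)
The plan is to run the cokernel construction of the previous subsection arrow by arrow. The underlying vector spaces $E_{\vx}$ are the cokernels $G_{\vx}/\im f_{\vx}$ and are therefore already fixed by the dimension vector of the preceding proposition; it remains only to determine the matrices $E_\al$. By that construction, for each arrow $\al:\vx\lra\vy$ the map $E_\al$ is the unique linear map satisfying $E_\al\circ g_{\vy}=g_{\vx}\circ G_\al$, existence following because $g_{\vx}\circ G_\al$ annihilates $\ker g_{\vy}=\im f_{\vy}$ and uniqueness because $g_{\vy}$ is surjective. Thus the whole computation amounts to solving this single equation at every arrow, using the matrices $G_\al$ of the direct sum $G=\bigoplus_{i\in I}L(b_i\vx_i)$ and the reduction maps $g_{\vx}$ recorded above.

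First I would note that $G_\al$ is the block-diagonal sum of the three matrices of the summands $L(b_i\vx_i)$, each having the shape recalled for $L$ but with its dimension jump shifted to the arrow $\al^{(i)}_{a_i+b_i+1}$. Along every arrow for which all three summand maps are identities and for which $g_{\vx}=g_{\vy}$ — that is, every arrow at whose endpoints no summand changes dimension — the equation forces $E_\al=\id$. This reduces the problem to the finitely many arrows singled out before the theorem, namely $\al^{(j)}_1$ for $j\geq 3$, the first jumps $\al^{(j)}_{a_j+1}$ for all $j$, and the second jumps $\al^{(i)}_{a_i+b_i+1}$ for $i\in I$.

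For each of these I would substitute the relevant block forms of $g_{\vy}$, $g_{\vx}$ and $G_\al$ and read off $E_\al$ from the three generic diagram solutions computed just above the statement: the purely diagonal case (covering the $j$-th arm arrows with $j\neq i_3$ together with $\al^{(i_3)}_1$), the case of the first jump $\al^{(i_3)}_{a_{i_3}+1}$, and the case of the second jump $\al^{(i_3)}_{a_{i_3}+b_{i_3}+1}$ in the distinguished arm $i_3$. Feeding in the explicit $I$, $X_{n+1\times n}$, $Y_{n+1\times n}$ and $Z_{n+1}(\la_j)$ blocks occurring in $g_{\vx}$ and $G_\al$ then produces exactly the matrices asserted in the statement, the entry $-1$ arising from the normalization $\mu_{i_3}=-1$ fixed earlier by Lemma \ref{lem:exceptional_cokernel} $(iii)$.

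It remains to dispose of the two bookkeeping exceptions. On the second arm every $X_{*\times*}$ must be replaced by $Y_{*\times*}$, which is already built into the shape of $L$ and of $L(b_2\vx_2)$ and hence propagates automatically through the same computation. When $a_i=0$ for an arm $i\geq 3$ the two arrows $\al^{(i)}_1$ and $\al^{(i)}_{a_i+1}$ coincide, so one simply composes the two block matrices obtained for them. The main obstacle here is not any individual calculation but keeping the block decomposition consistent across vertices: one must track at each vertex which blocks of $G_{\vx}$ belong to which summand and how $g_{\vx}$ reorders them, and in particular handle the central branch vertex — where all arms meet — so that the three generic solutions are applied with matching block sizes.
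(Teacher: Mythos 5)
Your proposal is correct and follows essentially the same route as the paper: it runs the cokernel construction arrow by arrow, reduces to the finitely many arrows where a dimension jump or a change in the reduction maps $g_{\vx}$ occurs, and reads off $E_{\al}$ from the three generic commutative squares (diagonal case, first jump, and second jump in the arm $i_3$), with the sign $-1$ coming from the normalization $\mu_{i_1}=\mu_{i_2}=1$, $\mu_{i_3}=-1$. Your justification of unique solvability of $E_{\al}\circ g_{\vy}=g_{\vx}\circ G_{\al}$ (existence since $g_{\vx}\circ G_{\al}$ kills $\im f_{\vy}$, uniqueness since $g_{\vy}$ is surjective) is in fact stated more accurately than in the paper, which attributes uniqueness to $G_{\al}$ being a monomorphism.
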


\subsection{The modules of the type \ref{case:beta:3}}  This case is similar to the
previous computation. We will point out the differences using the same notations as before. In the case of type $B_3$ we assume that
$$a_{i_3}+b_{i_3}\geq p_{i_3}\quad \textnormal{and}\quad a_{i}+b_{i}< p_{i}\quad \textnormal{for}\quad i=i_1,i_2.$$
Let $c_{i_3}:=a_{i_3}+b_{i_3}-p_{i_3}$, then $0\leq c_{i_3}<a_{i_3}$.

The maps $x_{i}^{b_{i}}:\Oo(\vy)\lra \Oo(\vy+b_{i}\vx_{i})$ for $i=i_1$ or $i=i_2$ are the same as before. The map $x_{i_3}^{b_{i_3}}:\Oo(\vy)\lra \Oo(\vy+b_{i_3}\vx_{i_3})$ has the form $\mu_{i_3}\cdot(h_\vx)_{0\leq \vx\leq \vc}$, where for $j\neq i_3$ we have
\begin{align*}
&h_{0}=h_{\vx_j}=\cdots=h_{a_j\vx_j}=Z_{n+2\times n+1}(-\la_{i_3})\\
&h_{(a_j+1)\vx_j}=\cdots=h_{\vc}=Z_{n+1\times n}(-\la_{i_3})
\end{align*}
and for $i_3$ holds
\begin{align*}
&h_{0}=h_{\vx_{i_3}}=\cdots=h_{c_{i_3}\vx_{i_3}}=Z_{n+2\times n+1}(-\la_{i_3})\\
&h_{(c_{i_3}+1)\vx_i}=\cdots=h_{a_{i_3}\vx_{i_3}}=Z_{n+1}(-\la_{i_3})\\
&h_{(a_{i_3}+1)\vx_i}=\cdots=h_{\vc}=Z_{n+1\times n}(-\la_{i_3})
\end{align*}

Then the map $g=(g_\vx)_{0\leq \vx\leq \vc}:G\lra E$ has the following shape:
\begin{align*}
&g_{\vec{0}}=g_{\vx_j}=\cdots=g_{a_j\vx_j}= \blockmatrix{-I_{n+1}}{I_{n+1}}{0}{Z_{n+2\times n+1}(-\la_{i_3})}{0}{I_{n+2}}&& \textnormal{for}\quad j\neq i_3\\
&g_{(a_j+1)\vx_j}=\cdots=g_{\vc}= \blockmatrix{-I_{n}}{I_{n}}{0}{Z_{n+1\times n}(-\la_{i_3})}{0}{I_{n+1}}&& \textnormal{for}\quad j\notin I\\
&g_{(a_{i_1}+1)\vx_{i_1}}=\cdots=g_{(a_{i_1}+b_{i_1})\vx_{i_1}}= \blockmatrix{I_{n+1}}{-X_{n+1\times n}}{0}{0}{Z_{n+1\times n}(-\la_{i_3})}{I_{n+1}}&& \textnormal{for}\quad i_1\neq 2
\\
&g_{(a_i+b_i+1)\vx_i}=\cdots=g_{\vc}= \blockmatrix{-I_{n}}{I_{n}}{0}{Z_{n+1\times n}(-\la_{i_3})}{0}{I_{n+1}}&& \textnormal{for}\quad i\in\{i_1,i_2\}\\
&g_{(a_{i_2}+1)\vx_{i_2}}=\cdots=g_{(a_{i_2}+b_{i_2})\vx_{i_2}}= \blockmatrix{-X_{n+1\times n}}{I_{n+1}}{0}{Z_{n+1\times n}(-\la_{i_3})}{0}{I_{n+1}}&& \textnormal{for}\quad i_2\neq 2\\
&g_{\vec{0}}=g_{\vx_{i_3}}=\cdots=g_{c_{i_3}\vx_{i_3}}= \blockmatrix{-I_{n+1}}{I_{n+1}}{0}{Z_{n+2\times n+1}(-\la_{i_3})}{0}{I_{n+2}}&& \\
&g_{(c_{i_3}+1)\vx_{i_3}}=\cdots=g_{a_{i_3}\vx_{i_3}}= \blockmatrix{-I_{n+1}}{I_{n+1}}{0}{Z_{n+1}(-\la_{i_3})}{0}{I_{n+1}}.&& \\
&g_{(a_{i_3}+1)\vx_{i_3}}=\cdots=g_{\vc}= \blockmatrix{-I_{n}}{I_{n}}{0}{Z_{n+1\times n}(-\la_{i_3})}{0}{I_{n+1}}.&&
\end{align*}
Lets remark, that if $i_k=2$ for same $k=1,2$ or $3$, then we need to switch matrices $X_{*,*}$ to $Y_{*,*}$ in $i_k$-arm.

\begin{proposition}
The module of type \ref{case:beta:3} has the following dimension vector.
$$\left(\begin{array}{ccccc}
&&{\LARGE{\cdots}}&& \\
&\upramka{\vx_i}{2n+3}\cdots\upramka{a_i\vx_i}{2n+3}&\upramka{(a_i+1)\vx_i}{2n+3}\cdots\upramka{(a_i+b_i)\vx_i}{2n+2}&\upramka{(a_i+b_i+1)\vx_i}{2n+2}\cdots\upramka{(p_i-1)\vx_i}{2n+1}& \\
&&{\Large{\cdots}}&& \\
\upramka{0}{2n+3}&\cdots&\upramka{a_j\vx_j}{2n+3}\quad\upramka{(a_j+1)\vx_j}{2n+1} &\cdots& \upramka{\vc}{2n+1}\\
&&{\Large{\cdots}}&& \\
&\upramka{\vx_{i_3}}{2n+3}\cdots\upramka{c_{i_3}\vx_{i_3}}{2n+3}&\upramka{(c_{i_3}+1)\vx_{i_3}}{2n+3}\cdots\upramka{(a_{i_3})\vx_{i_3}}{2n+2}&\upramka{(a_{i_3}+1)\vx_{i_3}}{2n+2}\cdots\upramka{(p_{i_3}-1)\vx_{i_3}}{2n+1}& \\
&&{\Large{\cdots}}&& \\
\end{array}\right),$$
where $i\in \{i_1,i_2\}$ and $j\notin I$.
Here the captions above the frames denote the vertices of the quiver and the numbers in the frames
the dimensions of the vector spaces for them.
\end{proposition}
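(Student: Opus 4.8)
The plan is to obtain the dimension vector directly from the data already assembled, since the matrices of the surjection $g=(g_{\vx})_{0\le\vx\le\vc}$ were just determined. For every vertex $\vx$ the map $g_{\vx}\colon G_{\vx}\lra E_{\vx}=\mathrm{coker}(f_{\vx})$ is surjective, so $\dim_k E_{\vx}$ is simply the number of rows of the block matrix $g_{\vx}$. I would therefore run through the listed matrices family by family and add the block-row heights: $g_{\vec{0}}$ has rows of heights $n+1$ and $n+2$, giving $2n+3$; $g_{(a_{i_1}+1)\vx_{i_1}}=\cdots=g_{(a_{i_1}+b_{i_1})\vx_{i_1}}$ has heights $n+1$ and $n+1$, giving $2n+2$; and $g_{(a_{i_3}+1)\vx_{i_3}}=\cdots=g_{\vc}$ has heights $n$ and $n+1$, giving $2n+1$. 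Collecting these counts over all vertices reproduces the claimed diagram.

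To keep the argument independent of the explicit shape of $g$, I would alternatively use additivity of the dimension vector. Because $\det L=n\vc+\sum_{i}a_i\vx_i\ge 0$, Lemma \ref{lem:wiazka_liniowa_jest_modulem} gives $L\in\modplus\La$, so applying $\HomX{T}{-}$ to $0\lra L\lra\bigoplus_{i\in I}L(b_i\vx_i)\lra E\lra 0$ produces a short exact sequence of $\La$-modules, whence $\dim_k E_{\vx}=\dim_k G_{\vx}-\dim_k L_{\vx}$ at each vertex. Using $\dim_k\HomX{\Oo(\vx)}{\Oo(\vz)}=\dim_k S_{\vz-\vx}$ for the line-bundle summands, this becomes
$$\dim_k E_{\vx}=\sum_{i\in I}\dim_k S_{\det L+b_i\vx_i-\vx}-\dim_k S_{\det L-\vx}.$$
I would then invoke the standard fact that, for $\vz=m\vc+\sum_i c_i\vx_i$ in normal form, $\dim_k S_{\vz}$ equals $m+1$ if $m\ge 0$ and $0$ otherwise. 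Substituting $\vx=s\vx_m$ and comparing $s$ with the relevant thresholds reduces everything to elementary arithmetic in $\LL$.

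The only step needing genuine care — and the feature separating case \ref{case:beta:3} from case \ref{case:alpha} — is the arm $i_3$. Since $a_{i_3}+b_{i_3}\ge p_{i_3}$, the element $\det L+b_{i_3}\vx_{i_3}$ carries over one copy of $\vc$, so the $\vc$-coefficient of $\det L(b_{i_3}\vx_{i_3})$ is $n+1$; hence the summand $L(b_{i_3}\vx_{i_3})$ has base dimension one larger than the other two, and its single dimension drop is located at $c_{i_3}\vx_{i_3}$, where $c_{i_3}=a_{i_3}+b_{i_3}-p_{i_3}$, rather than at $a_{i_3}+b_{i_3}$. Consequently two separate jumps appear along this arm: one at $\al^{(i_3)}_{c_{i_3}+1}$, produced by this summand alone, and one at $\al^{(i_3)}_{a_{i_3}+1}$, produced by the other two summands together with $L$ (whose own drop still sits at $a_{i_3}$). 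Tracking these against one another yields the three constant blocks $2n+3$, $2n+2$, $2n+1$ on the arm $i_3$. On the remaining arms the behaviour is as in case \ref{case:alpha}: an arm $j\notin I$ takes the value $2n+3$ up to $a_j\vx_j$ and $2n+1$ afterwards, while on the arms $i_1,i_2$ an intermediate block $2n+2$ appears between the thresholds $a_i\vx_i$ and $(a_i+b_i)\vx_i$. Assembling the values over all vertices gives the asserted dimension vector; the main obstacle is purely the bookkeeping of the two coinciding-or-distinct jumps on the arm $i_3$.
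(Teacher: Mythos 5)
Your proposal is correct. The paper gives no separate proof of this proposition: it is stated as an immediate consequence of the matrices $g_{\vx}$ computed just before it, so your first method (reading $\dim_k E_{\vx}$ as the number of rows of the surjection $g_{\vx}$, i.e.\ the sum of the block-row heights) is exactly the paper's implicit argument, and your sample counts $2n+3$, $2n+2$, $2n+1$ are the right ones. Your second method is a genuinely independent route: additivity $\dim_k E_{\vx}=\dim_k G_{\vx}-\dim_k L_{\vx}$ for the short exact sequence of $\La$-modules, combined with the fact that $\dim_k S_{\vz}=m+1$ for $\vz=m\vc+\sum_i c_i\vx_i$ in normal form with $m\geq 0$ (and $0$ otherwise). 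This version does not presuppose that the displayed matrices are correct, and it isolates the single feature separating type B$_3$ from type A: since $a_{i_3}+b_{i_3}\geq p_{i_3}$, the determinant $\det L(b_{i_3}\vx_{i_3})=(n+1)\vc+c_{i_3}\vx_{i_3}+\sum_{j\neq i_3}a_j\vx_j$ carries an extra copy of $\vc$, so that summand contributes base dimension $n+2$ with its drop on arm $i_3$ located at $c_{i_3}$, while $L$ and the other two summands drop at $a_{i_3}$; this produces the two separate jumps $2n+3\to 2n+2\to 2n+1$ along that arm, matching the stated dimension vector, and the same bookkeeping on the arms $i_1,i_2$, the arms $j\notin I$, and at the vertices $0$ and $\vc$ reproduces all the remaining entries. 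The additivity argument is the more robust of the two, since it doubles as a consistency check on the matrices $g_{\vx}$ themselves.
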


Proceeding as in the case before we get representations for a module of type \ref{case:beta:3}.
\begin{theorem}
	\label{lem:representation_of_B_3}
	The extension module $E$ of type \ref{case:beta:3} can be exhibited by the following vector spaces and matrices.\\
	{\small		$\xymatrix @C -7pt{
			&\ar[ldd]\cdots&& &\cdots&& & && \cdots& \\
			&\cdots\ar[ld]|{E_{\al^{(i)}_{1}}}&k^{2n+3}\ar[l]_{\id}&& \ar[ll]_{E_{\al^{(i)}_{a_i+1} }}k^{2n+2}& \ar[l]_{\id}\cdots&\ar[l]_{\id}k^{2n+2} && k^{2n+1} \ar[ll]_{E_{\al^{(i)}_{a_i+b_i+1} }} &\ar[l]_{\id}\cdots &\\
			k^{2n+3}&&\ar[ll]|{E_{\al^{(j)}_{1}}}\cdots&& \ar[ll]k^{2n+3}&& k^{2n+1}\ar[ll]_{E_{\al^{(j)}_{a_j+1}}}    &&  \cdots\ar[ll]&& \ar[ll]\ar[lu]_{\id} \ar[ld]_{\id}  \ar[luu]_{\id}\ar[ldd]_{\id}k^{2n+1}\\
			&\cdots\ar[lu]|{E_{\al^{(i_3)}_{1}}}&k^{2n+3}\ar[l]_{\id}&& \ar[ll]_{E_{\al^{(i_3)}_{c_{i_3}+1} }}k^{2n+2}& \ar[l]_{\id}\cdots&\ar[l]_{\id}k^{2n+2} && k^{2n+1} \ar[ll]_{E_{\al^{(i_3)}_{a_{i_3}+1} }} &\ar[l]_{\id}\cdots &\\
			&\ar[luu]^{E_{\al_{1}^{(t)}}}\cdots&& &\cdots&& & && \cdots&
		}$  }
	for $j\notin \{i_1,i_2,i_3\}$, where
	\begin{align*}
	&E_{\al^{(j)}_{1}}=\BlockMatrixDwoDwo{Z_{n+1}(\la_j)}{0}{0}{Z_{n+2}(\la_j)}\ \textnormal{for}\ j>2&&E_{\al^{(1)}_{1}}=E_{\al^{(2)}_{1}}=\id, \\
	&E_{\al^{(i_1)}_{a_{i_1}+1}}=\BlockMatrixDwoDwo{-I_{n+1}}{0}{Z_{n+2\times n+1}(-\la_{i_3})}{X_{n+2\times n+1}},&&
	E_{\al^{(i_1)}_{a_{i_1}+b_{i_1}+1}}=\BlockMatrixDwoDwo{-X_{n+1\times n}}{0}{Z_{n+1\times n}(-\la_{i_3})}{I_{n+1}},\\
	&E_{\al^{(i_2)}_{a_{i_2}+1}}=\BlockMatrixDwoDwo{I_{n+1}}{0}{0}{X_{n+2\times n+1}},&&
	E_{\al^{(i_2)}_{a_{i_2}+b_{i_2}+1}}=\BlockMatrixDwoDwo{X_{n+1\times n}}{0}{0}{I_{n+1}},\\
	&E_{\al^{(i_3)}_{c_{i_3}+1}}= \BlockMatrixDwoDwo{I_{n+1}}{0}{0}{X_{n+2\times n+1}},&&
	E_{\al^{(i_3)}_{a_{i_3}+1}}=\BlockMatrixDwoDwo{X_{n+1\times n}}{0}{0}{I_{n+1}},\\
	&E_{\al^{(j)}_{a_j+1} }=\BlockMatrixDwoDwo{X_{n+1\times n}}{0}{0}{X_{n+2\times n+1}}&&\textnormal{for}\quad j\notin \{i_1,i_2,i_3\}.
	\end{align*}
In the case of the second arm we need to switch from the matrices $X_{*\times*}$ to $Y_{*\times*}$.
Moreover, if $a_i=0$, then the arrow $\alpha_1^{(i)}$ coincide with the arrow $\alpha_{a_i+1}^{(i)}$,
in this case in the place of $\alpha_1^{(i)}$ we put composition of  matrices $E_{\al^{(i)}_{1}}$ and $E_{\al^{(i)}_{a_{i}+1}}$.
\end{theorem}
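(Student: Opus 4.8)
The plan is to read off the representation of $E$ directly from the cokernel construction described above, applied to the short exact sequence $(\star)$ of $\La$-modules, exactly as in the proof of Theorem~\ref{lem:representation_of_type_A} for the case~\ref{case:alpha}. At each vertex $\vx$ of the quiver the space $E_\vx$ is the cokernel of $f_\vx$ and $g_\vx$ is the reduction modulo $\im f_\vx$; the representation of $G=\bigoplus_{i\in I}L(b_i\vx_i)$ and the maps $G_\al$ are direct sums of the standard line-bundle representations of \cite[Proposition~3.4]{Meltzer:2007}, the monomorphism $f=\big[x_i^{b_i}\big]_{i\in I}$ has the matrices recorded above (with scalars normalised to $\mu_{i_1}=\mu_{i_2}=1$ and $\mu_{i_3}=-1$, which is legitimate by Lemma~\ref{lem:exceptional_cokernel}\,$(iii)$), and the block forms of the $g_\vx$, together with the dimension vector, have already been displayed. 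Hence the whole task reduces to determining, for each arrow $\al\colon\vx\to\vy$, the induced map $E_\al\colon E_\vy\to E_\vx$ by completing the commutative square with $G_\al$ on top and $g_\vx,g_\vy$ as vertical maps; because the cokernel maps $g_\vy$ are surjective this completion is unique, so every $E_\al$ is forced.

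First I would note that whenever $g_\vx=g_\vy$ and all components $L(b_i\vx_i)_\al$ are identities, the completed map $E_\al$ is again the identity. This restricts the real work to three families of distinguished arrows: the starting arrows $\al_1^{(j)}$ for $j=3,\dots,t$, which carry the $Z_\bullet(\la_j)$ blocks dictated by the canonical relations; the arrows $\al_{a_j+1}^{(j)}$ for $j=1,\dots,t$, sitting at the dimension jump of $L$; and the arrows $\al_{a_i+b_i+1}^{(i)}$ for $i\in I$, sitting at the dimension jump of the summand $L(b_i\vx_i)$. In the arm $i_3$ there is, in addition, the jump at $c_{i_3}=a_{i_3}+b_{i_3}-p_{i_3}$, which splits that arm and is responsible for the rectangular $Z_{\bullet\times\bullet}(-\la_{i_3})$ blocks appearing in the maps $g_\vx$. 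For each distinguished arrow I would substitute the relevant block forms of $g_\vx$, $g_\vy$ and of the block-diagonal $G_\al$ into the square and solve the single matrix equation $g_\vx\,G_\al=E_\al\,g_\vy$ for its unique solution $E_\al$, precisely as in the three model diagrams of the case~\ref{case:alpha}.

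I would then assemble the resulting blocks into the asserted matrices, respecting the two conventions carried over from the case~\ref{case:alpha}: in the second arm every occurrence of $X_{\ast\times\ast}$ is replaced by $Y_{\ast\times\ast}$, and when $a_i=0$ the arrow $\al_1^{(i)}$ coincides with $\al_{a_i+1}^{(i)}$, so that in its place one records the composite of the two corresponding blocks $E_{\al_1^{(i)}}$ and $E_{\al_{a_i+1}^{(i)}}$. Since every block produced in this way is built from $0$, $\pm I_\bullet$, $X_{\ast\times\ast}$, $Y_{\ast\times\ast}$ and $Z_\bullet(\pm\la_j)$, the entries of all matrices $E_\al$ lie among $0$, $\pm 1$ and $\pm\la_j$, matching the claim.

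The main obstacle is the block bookkeeping in the arm $i_3$. Because this is the arm with $a_{i_3}+b_{i_3}\ge p_{i_3}$, the reduction maps there no longer have the clean identity/$X$-block shape of the case~\ref{case:alpha} but instead involve the matrices $Z_\bullet(-\la_{i_3})$ together with the extra jump at $c_{i_3}$. The delicate point is to verify that completing the squares at $\al_{c_{i_3}+1}^{(i_3)}$ and $\al_{a_{i_3}+1}^{(i_3)}$ yields the stated pure $X_{\bullet\times\bullet}$ blocks rather than $Z$-blocks, i.e.\ that the $\la_{i_3}$-contributions coming from $G_\al$ cancel against those in $g_\vx$ and $g_\vy$. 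That the collection $(E_\al)_{\al\in Q_1}$ automatically satisfies the canonical relations, so that $E$ is a genuine $\La$-module, is guaranteed abstractly by the cokernel construction; the only thing demanding patience is re-deriving the explicit blocks across all distinguished arrows while tracking the special position of the arm index $2$.
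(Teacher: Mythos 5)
Your proposal is correct and is essentially the paper's own argument: the paper likewise proves this theorem by running the type-\ref{case:alpha} cokernel construction on the sequence $0\lra L\lra\bigoplus_{i\in I}L(b_i\vx_i)\lra E\lra 0$, using the $B_3$-specific matrices for $x_{i_3}^{b_{i_3}}$ and the displayed reduction maps $g_{\vx}$, and then uniquely completing the commutative squares $g_{\vx}\,G_{\al}=E_{\al}\,g_{\vy}$ at the distinguished arrows (the paper condenses all of this into ``proceeding as in the case before''). The subtlety you flag --- that the $Z_{\bullet\times\bullet}(-\la_{i_3})$ contributions in the arm $i_3$ must cancel so that the squares at $\al^{(i_3)}_{c_{i_3}+1}$ and $\al^{(i_3)}_{a_{i_3}+1}$ produce pure $X$-blocks --- is precisely the bookkeeping the paper leaves implicit.
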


\section{Exceptional modules  of the higher rank}

In this paper we have focused on the case of $\La$-modules of rank two.
We remark that the presented construction  of them by cokernels
can be applied also for exceptional modules of higher rank.
For this we have to consider  exact sequences of the form
$$0\lra L\uplra{f} \oplus_{j\in J} L(b_i\vx_i)\lra E\lra 0,$$
for $J\subset \{1,2,\dots,t\}$.
 In this case the cokernel $E$ is exceptional of rank $|J|-1$.
Therefore here we obtain exceptional modules of rank from $2$, to $t-1$.
It is an open question whether in this way we get
all exceptional modules of rank $r$, for $3 \leq r \leq t$.

\bibliographystyle{amsplain}


\end{document}